 \DeclareMathOperator{\Zeros}{Zeros}
  \DeclareMathOperator{\zeros}{Zeros}
\DeclareMathOperator{\tor}{Tor}
 \DeclareMathOperator{\Res}{Res}
 \DeclareMathOperator{\gm}{\mathbb G_m}
\def\norm{{\mathrm{norm}}}
\DeclareMathOperator{\Bl}{\mathcal {B}\l}
 \DeclareMathOperator{\bl}{b\l}
\def\inv{^{-1}}
\def\sepp{{\mathrm{sepp}}}
\def\ssep{^{\mathrm{sep}}}
\DeclareMathOperator{\mgbar}{\overline\M_g}
\DeclareMathOperator{\del}{\partial}
 \DeclareMathOperator{\Hom}{Hom}
\DeclareMathOperator{\coker}{coker}
\DeclareMathOperator{\out}{out}
\DeclareMathOperator{\inn}{inn}
\DeclareMathOperator{\bD}{\mathbb D}
\def\pre{\mathrm{pre}}
\def\refp #1.{(\ref{#1})}
\newcommand{\A}{\mathcal{A}}
\newcommand{\J}{\mathcal{J}}
\newcommand{\M}{\mathcal{M}}
\newcommand{\bE}{\mathbb{E}}
\newcommand{\sE}{{\mathrm s\mathbb E}}
\newcommand{\aE}{{\leftidx{_\mathfrak a}{\mathbb E}{}}}
\newcommand{\lbl}[1]{\label{#1}}
\newcommand{\Cal}[1]{\mathcal #1}
\newcommand{\ul}[1]{\underline {#1}}
\def\sbr #1.{^{[#1]}}
\def\sfl #1.{^{\lfloor #1\rfloor}}
\newcommand{\myit}[1]{\emph{\ {#1}\ }}
\newcommand{\mymax}{{\max}}
\newcommand{\mydiamond}[4]
{\begin{matrix}&& #1&&\\ &\swarrow&&\searrow&\\ #2&&&& #3\\
&\searrow&&\swarrow&     \\&& #4&&
\end{matrix}
}
\newcommand{\mycd}[4]{
\begin{matrix}
#1&\to&#2\\\downarrow&\circlearrowleft&\downarrow\\
#3&\to&#4
\end{matrix}
}
\newcommand\twostack [2]
\newcommand{\sectionend}
{\[\circ\gtrdot\ggg\mathbb{\times}\lll\lessdot\circ\]}
\newcommand\mymatrix[2] {\begin{matrix} #1\\ #2\end{matrix}}
\newcommand\red{{\mathrm red}}
\newcommand\vir{{\mathrm{vir}}}
\newcommand\sep{{\mathrm{sep}}}
\newcommand\phel{{\rm{PHEL }}}
\newcommand\vtheta{{\underline{\theta}}}
\DeclareMathOperator{\Mod}{Mod}
\def\inv{^{-1}}
\def\?{{\bf{??}}}
\def\iso{\ isomorphism\ }
\newcommand{\bn}{{\ Brill-Noether\ }}
\def\Proj{\textrm{Proj}}
\def\rx{\leftidx{_\mathrm R}{X}{}}
\def\lx{{\leftidx{_\mathrm L}{X}{}}}
\def\a{{\frak a}}
\def\M{\Cal M}
\def\A{\Bbb A}
\def\C{\mathbb C}
\def\P{\mathbb P}
\def\R{\mathbb R}
\def\Z{\mathbb Z}
\def\ord{\text{\rm ord}}
\def\sym{\text{\rm Sym} }
\def\ls{\vskip.25in}
\def\Q{\mathbb Q}
\def\L{\mathcal L}
\def\rmL{\mathrm L}
\def\O{\mathcal O}
\def\Sym{\textrm{Sym}}
\def\g{\mathfrak g}
\def\J{\mathfrak J}
\def\m{\mathfrak m}
\def\1/2{\frac{1}{2}}
\def\D{\mathfrak D}
\def\I{\mathcal{ I}}
\def\im{\text{im}}
\def\simto{\stackrel{\sim}{\rightarrow}}
\def\2{{[2]}}
\def\l{\ell}
\def\nl{\newline}
\def\he{\mathcal{HE}}
\def\hebar{\bar{\mathcal{HE}}}
\def\<{\langle}
\def\>{\rangle}
\def\im{\text{im}}
\def\2{{[2]}}
\def\l{\ell}
\def\Proj{\text{Proj}}
\def\scl #1.{^{\lceil#1\rceil}}
\def\spr #1.{^{(#1)}}
\def\sbc #1.{^{\{#1\}}}
\def\subpr#1.{_{(#1)}}
\newcommand{\aphi}{\leftidx{_\mathfrak a}\phi}
\def\beq{\begin{equation*}}
\def\eeq{\end{equation*}}
\newcommand{\sing}{{\mathrm{sing}}}
\newcommand{\lf}[1]{\leftidx{_{\ \mathrm L}}{#1}{}}
\newcommand{\rt}[1]{\leftidx{_{\ \mathrm R}}{#1}{}}
\newcommand{\lfrt}[1]{\leftidx{_{\ \mathrm LR}}{#1}{}}
\def\g3{{\Gamma\spr 3.}}
\def\ggg{{\Gamma\spr 3.}}
\def\hep{hyperelliptic\ }
\def\nhep{non-hyperelliptic\ }
\def\eva{essentially very ample\ }
\def\fkb {\frak b}
\newcommand{\md}[1]{\leftidx{_{\ \mathrm M}}{#1}{}}
\def\cJ{\mathcal J}
\def\rR{\mathrm R }
\def\rL{\mathrm L }
\def\ae{\leftidx{_{\mathfrak a}}{\mathbb E}}
\def\aE{\leftidx{_{\mathfrak a}}{\mathbb E}}
\newcommand{\eqspl}[2]{
\begin{equation}\label{#1}
\begin{split}
#2\end{split}\end{equation}}
\newcommand{\eqsp}[1]{\begin{equation*}
\begin{split}#1\end{split}\end{equation*}}
\newcommand{\exseq}[3]{
0\to #1\to #2\to #3\to 0
}
\newcommand{\beginalphaenum}{
\begin{enumerate}\renewcommand{\labelenumi}{ }
\item \begin{enumerate}
}
\def\eex{\end{rm}\end{example}}
\newcommand\newsection[1]{\section{#1}\setcounter{equation}{0}
}
\newcommand\newsubsection[1]{\subsection{#1}\setcounter{equation}{0}}
\newcommand{\be}{\mathbb E}
\newcommand{\loc}{\mathrm{loc}}
\newcommand\dual{\ \check{}\ }
\newtheorem{thm}{Theorem}  [section]
\newtheorem*{thm*}{Theorem}
\newtheorem*{prop*}{Proposition}
\newtheorem{cor}[thm]{Corollary}
\newtheorem*{cor*}{Corollary}
\newtheorem{lem}[thm]{Lemma}
\newtheorem{lem*}{Lemma}
\newtheorem{claim}[thm]{Claim}
\newtheorem*{claim*}{Claim}
\newtheorem{prop}[thm]{Proposition}
\newtheorem{propdef}[thm]{Proposition-definition}
\newtheorem{defn}[thm]{Definition}
\theoremstyle{remark}
\newtheorem{rem}[thm]{Remark}
\newtheorem{crit-rem}[thm]{Critical remark}
\newtheorem{remarks}[thm]{Remarks}
\newtheorem{example}[thm]{Example}
\newtheorem*{example*}{Example}
\newtheorem*{defn*}{Definition}
\begin{document}
\title {Canonical systems and their limits on
stable curves}

\author
{Ziv Ran}
\date {\today}
\address{\tiny  {\newline Ziv Ran \newline University of California
Mathematics Department\newline Surge Facility, Big Springs Rd.
\newline Riverside CA 92521
\newline \tt{ziv.ran @ ucr.edu}}}
\subjclass{14H10,
14H51}\keywords{Canonical system, nodal curve, linear system, degeneration of curves}

\begin{abstract}
We propose an object called 'sepcanonical system' on a
 stable curve $X_0$ which is to serve as limiting object-
  distinct from other such limits introduced previously-
  for the canonical system, as a smooth curve degenerates to $X_0$.
First, for
 curves  which cannot be separated by 2 or fewer nodes (the so-called '2-inseparable' curves), 
 the sepcanonical
  system consists of the sections of the dualizing sheaf, and
  fails to be very ample iff $X_0$ is a limit of
  smooth hyperelliptic curves (such $X_0$ are called 2-inseparable hyperelliptics).
 For general, 2-separable curves $X_0$,
this assertion is false, leading us to introduce the sepcanonical system, which is a collection of linear
systems on the '2-inseparable parts' of $X_0$,
each associated to a different twisted limit of the
canonical system, where the entire collection varies smoothly
with $X_0$. To define sepcanonical
system, we must endow
the curve with extra structure called an 'azimuthal structure'. We show (Theorem \ref{azi-he-thm}) that the sepcanonical system is
 'essentially very ample' unless the curve is a tree-like
 arrangement of 2-inseparable hyperelliptics. In a subsequent paper \cite{grd} we will show
 that the latter property is equivalent to the curve being a
 limit of smooth  hyperelliptics, and will essentially give defining equation for the closure of the locus of smooth hyperelliptic curves in the moduli space of stable curves.

\end{abstract}
\thanks{arXiv.org/math.AG/1104.4747}

\maketitle
\tableofcontents

\section*{Introduction}
A big part of the geometry of a nonsingular curve revolves around its canonical system and canonical map, which is always an embedding except for
the well-understood exception of hyperelliptic curves. The
purpose of this paper is to identify and study an appropriate
'limiting object', called \emph{sepcanonical sytem}, 
of the canonical system as the curve degenerates to a stable one, an issue
that is closely related to the extrinsic geometry, especially defining equations, of the closure of the hyperelliptic locus in the moduli space of stable curves. The main results are Theorem \ref{sepcanonical-thm},
which is (equivalent to) a characterization of the limit object by data on the limit
curve, and Theorem \ref{azi-he-thm} which characterizes
limits of smooth hyperelliptic curves in terms of sepcanonical systems.  \par
The default choice for the limiting object is certainly
the canonical system itself,
i.e. the linear system associated to the dualizing sheaf, on the limiting stable curve. This choice
 will be analyzed in \S 1-3 below. What the analysis shows is that the canonical system on a stable
curve $X_0$ is a good choice of limit as long as $X_0$ is '2-inseparable'
in the sense that it cannot be disconnected by removing 2 or fewer nodes. We will show (Theorem \ref{hel-2-insep-thm}) that a 2-inseparable curve
is a limit of smooth hyperelliptic curves iff its dualizing sheaf is not
very ample. Those curves were classified long ago by Catanese \cite{catanese-gorenstein}.\par
Going beyond 2-inseparable stable curves $X_0$, it becomes less than clear
that the canonical system on $X_0$ is the correct limiting object:
for example it is no longer true
 that $X_0$ is the limit of hyperelliptics
whenever its canonical map is not birational; indeed the locus of stable curves of genus $g$ whose canonical map is not birational has
components of fixed codimension $c$ independent of $g$ contained in
the boundary (in fact, in the boundary component $\Delta_0$) of
the Deligne-Mumford  Moduli space $\mgbar$. One the other hand, the  analysis of 2-inseparable curves suggests viewing these
as atoms for the purpose of constructing the limiting object for a general curve. \par
Before proceeding to describe our limiting object, it should be mentioned here that there exist general constructions
for limit objects of linear series, such as the one published
in an Inventiones paper by
 Eisenbud and Harris (cf. \cite{eisenbud-harris}) and an unpublished one \cite{ran-dls}.
 The Eisenbud-Harris construction is extended and
  studied in much detail for the case of the canonical series, 
 especially on  curves comprised of two components intersecting in  generically positioned points,
 in an Inventiones paper
 by Esteves-Medeiros \cite{esteves-medeiros} (which also references \cite{ran-dls}). 
 But this limit linear series is not a good limiting object from our perspective, because it does not vary smoothly,
  i.e. it jumps, in families (even locally trivial ones),
and therefore does not appear to be useful for the sort of enumerative applications
that we have in mind (\cite{internodal}, see below).\par
The limiting object that we propose here is
 called the \emph{sepcanonical system}. It is essentially
 a part of the full limit series associated to the canonical
 series (for a suitable modification of the family): more specifically,
 the largest part that never jumps.
To define the sepcanonical system the curve first has to he endowed with some
additional structure, closely related to Main\`o's notion
of enrichment \cite{maino}, which we call an \emph{azimuthal structure}, and which
consists essentially of a collection of smoothing directions at separating pairs of nodes (so-called 'biseps') .
The composite object is called an \emph{azimuthal curve}, and
the parameter space for these is a certain blowup of the
moduli space, in which the locus of 2-separable curves becomes
a divisor. This notion is best behaved for curves of 'semi-compact type', meaning that
distinct separating pairs of nonseparating nodes (called 'proper biseps') are disjoint, because
then the smoothing directions may be chosen independently.
Things are more complicated for non-semi compact-type curves,
but this turns out not to be much of a problem because
we shall see that in
a number of specific ways, those curves behave non- hyperelliptically.\par
Given an azimuthal curve $X_0$, the sepcanonical system
 $|\omega_{X_0}|^\sep$ is defined as a collection of
linear systems $|\omega_{X_0}|^\sep_Y$
containing $|\omega_{X_0}|_Y$, one on each  '2-component' $Y$ of $X_0$, i.e. subcurve obtained
by blowing up the separating nodes (called 'seps') and biseps
 of $X_0$. The apparently arbitrary definition is justified,
  in part, by Theorem
\ref{sepcanonical-thm} which shows that in a versal deformation
of $X_0$, each  $|\omega_{X_0}|^\sep_Y$
occurs as the restriction on $Y$ of the
limit of a certain twist (by a linear
combination of boundary components depending on $Y$) of the relative canonical bundle. Still one may ask\begin{itemize}
 \item
 why the particular choice of twist ?
 \item in what sense do the individual $|\omega|^\sep_Y$
 for different $Y$ form
 a whole?
 \end{itemize}
 As to the first question,
the choice
of twist is dictated by by the requirement, motivated as mentioned above by enumerative
applications, that
$|\omega_{X_0}|^\sep_Y$  should be as large as possible
(in particular it should contain $|\omega_{X_0}|_Y$), but
should  never
jump as $X_0$ deforms keeping $Y$ intact: in particular
the dimension of $|\omega_{X_0}|^\sep_Y$ depends only on the combinatorics. The fact that enlarging $|\omega_{X_0}|^\sep_Y$ must come at the expense of $Y$'s neighbors severely limits
the possible twist, leading to our particular choice
(which, as mentioned above, differs from choices made previously, e.g. in \cite{eisenbud-harris} and \cite{ran-dls}).\par
The second question is answered by a result in \cite{grd}
which shows that the various $|\omega_{X_0}|^\sep_Y$ together
effectively arise from a single bundle map, the so-called azimuthal modification of the Brill-Noether map
(see below).\par
Theorem \ref{sepcanonical-thm} in turn is a consequence of
an elementary 'Residue Lemma' \ref{residue-lem} which
for certain
twists of the canonical bundle identifies  the sections on the special fibre which
 lift to the general fibre.
\par The main justification for the sepcanonical system is
provided by results in \cite{grd} (Theorem 7.1 and special cases including 4.8, 5.10-5.12), based on the
present paper as well as on \cite{internodal}. These results
extend Theorem \ref{hel-2-insep-thm} to the general case and characterizes limits of smooth hyperelliptic curves.
As a step towards this result, we prove here a characterization (Theorem \ref{azi-he-thm}) of stable curves whose sepcanonical system is not 'essentially' very ample (e.g. non-birational)
on some 2-component, extending Catanese's
result for the canonical system: namely, those curves are
trees of hyperelliptic 2-inseparables joined at Weierstrass points and hyperelliptic divisors. These are precisely
the curves which are an admissible double cover of a rational tree.
\par 
The paper \cite{grd} may in fact be viewed as a \emph{global} (over Moduli) and \emph{quantitative} (or
enumerative)  counterpart to the local (over the base) and qualitative results of this paper. It constructs
a map of vector bundles 
\[\leftidx_{\a}{\phi}{}:\leftidx_{\a}{\be}{}\to\Lambda\]
(over the degree-2 relative Hilbert scheme of a given family), which is a birational modification
of the usual Brill-Noether evaluation map of the relative canonical bundle, 
where the geometry associated to $\leftidx_{\a}{\phi}{}$ is in a suitable sense that of the
sepcanonical system acting on degree-2 schemes.
\par
An alternative (enumeratively-qualified) approach to limits of hyperelliptic curves (and more general
1-dimensional linear systems) focuses on the hyperelliptic pencil and its limits (as maps to rational trees), rather than the canonical system. This is based on the
theory of admissible covers due to Harris-Mumford \cite{har-mum} and its generalizations such as the notion of
relative maps developed by Faber and Pandharipande \cite{faber-pandh}.
As mentioned above, another approach to the limit of the canonical series is due to Esteves-Medeiros \cite{esteves-medeiros}, based on
\cite{eisenbud-harris}. Some of the lemmas and propositions in \S 2-4 are similar in substance, if perhaps not in point of view,
 to results published previously.
Also, our notion of curve with azimuthal structure is
related
to the notion of
'enriched curve' developed in an unpublished Harvard thesis by L. Main\`o \cite{maino}
 (which in turn references our unpublished preprint \cite{ran-dls}).
In general, the notion of enriched curve,
in which biseps play no special role, is different
from that of (regular) azimuthal curve, in which they do;
however, the two notions coincide for semicompact-type curves.

\par
This paper was originally part of \cite{grd}, but is being published separately due to length considerations.\par
I thank Gwoho Liu for helpful comments, and David Eisenbud for
sending a copy of Main\`o's thesis.\ls\ls

\noindent
{\bf{Stable curves}}\ \  In this paper, we work over $\C$ 
and all curves are nodal unless otherwise stated.
A \emph{semistable }  curve is a nodal curve $X$ whose dualizing sheaf $\omega_X$
has nonnegative   degree on each irreducible component; equivalently, the desingularization
of each rational component of $X$ contains at least 2 node preimages; a semistable curve is \emph{stable} if the degree
  of $\omega_X$ is positive on each irreducible component.  A \emph{stable
pair} or \emph{stable pointed curve} is a pair consisting of a semistable
curve $X$ and a smooth point $p$ on it so that
$\omega_X(p)$ has positive degree on each irreducible component.
 Note if $\theta$ is a
separating node on a stable curve $X$, then
\[(X,\theta)=(\lf{X}, \lf{\theta})
\bigcup\limits_{\lf{\theta}\to\theta\leftarrow\rt{\theta}} (\rt{X},\rt{\theta})\] with each pair $(_*X,_*\theta)$ stable
(as pair). We call the $(_*X, _*\theta)$ the
\emph{left and right parts} of $X$ with respect to $\theta$,
usually denoted $\lf{X}(\theta), \rt{X}(\theta)$.\par
\newsection{Inseparables and base of the Canonical system}\lbl{insep}
The results of this section are not new (see e.g.
 \cite{catanese-gorenstein}) and are included merely for
 completeness and because we have a different viewpoint
 that will continue later in the paper. The theme is that the canonical 
 system's behavior on length-1 schemes, i.e.
its base locus, is related to separating nodes.
This theme is extended in the next section to relate behavior on
length-2 schemes, i.e. separation, to separating pairs of nodes.
 We begin by considering
curves
without separating nodes.\par A semistable curve is said to be
\emph{separable} if it has a separating node, \emph{inseparable}
otherwise. The \emph{separation} of a nodal curve $X$ is the blowup
$X^\sep$ of $X$ at all its separating nodes, called \emph{seps} for short. Clearly the connected
components of $X^\sep$ are the maximal connected inseparable
subcurves of $X$, called its \emph{inseparable components}. Any
irreducible component is contained in a unique inseparable
component. The \emph{separation tree} of a (connected nodal) curve
$X$ is the graph, necessarily a tree, whose vertices are the
inseparable components of $X$ and whose edges are the separating
nodes of $X$. More generally, we may associate a separation tree to
any collection of separating nodes on $X$. Separability is closely
related to freeness of the canonical system, as the following Lemma
begins to show:
\begin{lem}\lbl{insep-is-free}
 The canonical system of an inseparable connected semistable
 curve of genus $>0$ is free (of base points).
\end{lem}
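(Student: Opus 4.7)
The plan is to prove base-point-freeness separately at smooth points and at nodes, in each case reducing the statement to a connectivity fact about $X$.

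For a smooth point $p\in X$, Riemann--Roch on $\O_X(p)$ together with Serre duality gives
\[
h^0(\omega_X(-p)) \;=\; h^0(\O_X(p)) + g - 2,
\]
so $p$ fails to be a base point of $\omega_X$ iff $h^0(\O_X(p))=1$. I would argue the latter by contradiction. A non-constant $f\in H^0(\O_X(p))$ has at most a simple pole at $p$ and is regular elsewhere, so it must restrict to a constant on every irreducible component not containing $p$, and to a degree-one map on the component $X_0\ni p$. A degree-one map from an irreducible projective curve to $\mathbb P^1$ identifies the source with $\mathbb P^1$, so $X_0\cong\mathbb P^1$ (in particular $X_0$ has no self-nodes). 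Set $Z=\overline{X\setminus X_0}$. Matching values of $f$ across each node interior to $Z$ forces $f$ to take a single constant on each connected component $Z_j$ of $Z$, and injectivity of $f|_{X_0}$ then forces $Z_j\cap X_0$ to consist of a single point. That single node is a separator of $X$---contradicting inseparability. If $Z=\emptyset$ we instead have $X\cong\mathbb P^1$, of genus zero.

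For a node $q$, let $\tilde X_q$ denote the partial normalization at $q$ alone. The residue map gives a short exact sequence
\[
0 \longrightarrow \omega_{\tilde X_q} \longrightarrow \omega_X \longrightarrow k_q \longrightarrow 0,
\]
and $q$ is not a base point iff $H^0(\omega_X)\to k_q$ is surjective, iff (by the long exact sequence) $H^1(\omega_{\tilde X_q})\to H^1(\omega_X)$ is injective. Serre-dually, this latter map is dual to the pullback $H^0(\O_X)\to H^0(\O_{\tilde X_q})$, and because $q$ is non-separating by the inseparability hypothesis, $\tilde X_q$ is still connected; so the pullback is the isomorphism $k\xrightarrow{\sim}k$. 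Its dual is therefore also an isomorphism, in particular injective.

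Together the two cases cover every point of $X$. The more delicate piece is the smooth-point argument, where the main obstacle is extracting from a hypothetical non-constant $f\in H^0(\O_X(p))$ a node that must be separating. The decisive observation is that $f$ propagates as a single constant across each connected component of $Z$, so injectivity of $f|_{X_0}$ caps the number of attachment points of $Z_j$ to $X_0$ at one, directly producing the forbidden separator. The nodal case, by contrast, is a clean dimension count once the residue sequence is in hand.
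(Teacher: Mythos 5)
Your proof is correct, and its treatment of smooth points takes a genuinely different route from the paper's. At the nodes the two arguments are the same in substance: both pass to the partial normalization at the node and win by a one-dimensional count (the paper phrases it as $h^0(\omega_{X'}(p_1+p_2))=g>h^0(\omega_{X'})$, you phrase it as injectivity of $H^1(\omega_{\tilde X_q})\to H^1(\omega_X)$, which by the long exact sequence and connectedness of $\tilde X_q$ is automatic -- note you do not even need the Serre duality detour, since that map is surjective with both sides one-dimensional). For smooth points, however, the paper first asserts that base points can only occur on smooth rational components and then computes the rank of the restriction map $H^0(\omega_X)\to H^0(\omega_X|_C)$ by identifying its kernel with the differentials on the complementary subcurve; you instead convert the base-point condition into $h^0(\O_X(p))\geq 2$ via Riemann--Roch and show directly that a nonconstant $f\in H^0(\O_X(p))$ manufactures a separating node: $f$ forces the component through $p$ to be a smooth $\P^1$ mapped isomorphically to $\P^1$ and forces each connected piece of the complement to attach at a single node. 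Your version is uniform over all smooth points, sidesteps the paper's ``it follows easily'' reduction to rational components, and makes the use of inseparability completely explicit; the paper's version localizes the question to rational components, which sets up its subsequent corollary identifying the base locus with the spines and separating nodes.
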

\begin{proof}
 We may assume $X$ is singular, and let $X'\to X$ be the blowup
 of a (nonseparating) node $\theta$, with node preimages $p_1, p_2$. Then $X'$
 is a connected nodal curve of genus $g-1$ and $h^0(\omega_{X'}(p_1+p_2))=g
 >h^0(\omega_{X'})$. From the definition of dualizing sheaf it follows
 that $\omega_{X'}(p_1+p_2)$ has a section nonvanishing at both $p_1, p_2$
 hence $\omega_X$ is free at $\theta$. Thus $\omega_X$ is free at all
 (automatically nonseparating) nodes. It follows easily that any base point
 of $\omega_X$ must occur on a smooth rational components $C$,
each of  which must contain $r\geq 2$ nodes. Consider the restriction map
 \[\rho: H^0(\omega_X)\to H^0(\omega_X|_C)=H^0(\omega_C(r)).\]
  Its kernel may be identified with the differentials on the complementary subcurve to $C$, which has genus $g-r+1$.
  Therefore $\rho$ has rank $r-1$, hence is surjective, so $\omega_X$ has no base points on $C$.
Therefore $\omega_X$ is free.
\end{proof}
A smooth rational inseparable component of a nodal curve $X$ is called a
a \emph{separating line} of $X$ (see \cite{abel-gor}, Definition 4.7) and $X$ is said to be a
\emph{comb} if it has a separating line. The following result is due to Catanese (\cite{catanese-gorenstein}, Thm. D),
who uses a different terminology.
\begin{cor}[Catanese]
The base locus of the canonical system on a semistable curve
is the union of the separating lines and the separating nodes.
\end{cor}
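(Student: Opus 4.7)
The plan is to show separately that (i) every separating node and every point of a spine lies in the base locus, and (ii) every other point of $X$ admits a section of $\omega_X$ nonvanishing there. Both parts rest on the residue theorem together with Lemma \ref{insep-is-free}.

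For a separating node $\theta$, write $X=\lf{X}\cup_\theta\rt{X}$ as in the introduction. Any $\phi\in H^0(\omega_X)$ restricts on $\lf{X}$ to a meromorphic differential whose only possible pole is a simple pole at $\lf{\theta}$, and the residue theorem on $\lf{X}$ forces this residue to vanish. Since a local generator of $\omega_X$ at $\theta$ is $dx/x=-dy/y$ in local coordinates $xy=0$, the value at $\theta$ of any section is, up to sign, its residue on either branch; hence $\phi(\theta)=0$. For a spine $C\cong\P^1$: by definition $C$ is itself an inseparable component, so every node of $X$ lying on $C$ is separating, and applying the preceding at each such node forces every residue of $\phi|_C$ to vanish. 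Thus $\phi|_C\in H^0(\omega_C)=H^0(\O_{\P^1}(-2))=0$, so $\phi$ vanishes identically on $C$.

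For the converse, let $p$ be a point of $X$ lying neither on a spine nor at a separating node, and let $Y$ be the inseparable component of $X$ containing $p$. A connected inseparable nodal curve of arithmetic genus $0$ must be a smooth $\P^1$ (a single node on an irreducible rational curve already produces arithmetic genus $1$, and any cycle in the dual graph gives genus $\geq 1$), hence a spine; since $Y$ is not a spine we have $g(Y)\geq 1$, and Lemma \ref{insep-is-free} supplies a section $\psi\in H^0(\omega_Y)$ with $\psi(p)\neq 0$. Extend $\psi$ by zero on every other inseparable component of $X$: at each separating node of $X$ incident to $Y$, the residue of $\psi$ vanishes because $\psi$ is holomorphic at that (smooth) separation point of $Y$, while on the other side the section is identically zero, so the residue-matching condition at each separating node is trivially satisfied. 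This yields a global section $\tilde\psi\in H^0(\omega_X)$ with $\tilde\psi(p)=\psi(p)\neq 0$.

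The only delicate step is the extension-by-zero construction, which depends on the observation that holomorphic differentials on $Y$ have vanishing residue at each separation point; granted this, the argument is a direct assembly of Lemma \ref{insep-is-free} with the residue theorem.
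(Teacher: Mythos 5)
Your proof is correct and follows essentially the same route as the paper's: both directions rest on Lemma \ref{insep-is-free} together with the splitting of $H^0(\omega_X)$ into contributions from the inseparable components. The only difference is organizational --- you replace the paper's induction on the separation tree with a one-step extension-by-zero from the inseparable component containing the point, and you spell out the residue-theorem computation that the paper dismisses as ``elementary from Riemann-Roch.''
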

\begin{proof}
To begin with, it is elementary from Riemann-Roch
that the base locus contains
the separating lines and the separating nodes. For the converse, we
use induction on the number of irreducible components of the
curve $X$, assumed connected and of genus $>0$.
If $X$ has a separating line, we can conclude easily by applying induction to
the remaining inseparable components. Else,
let $Y$ be
an inseparable component that constitutes an end vertex of the separation
tree of $X$. Then $X=Y\cup_pZ$ with $Z$ connected
and semistable and without separating lines, and
$H^0(\omega_Y)=H^0(\omega_Y(p))$
again by Riemann-Roch. By Lemma \ref{insep-is-free},
the base  locus of $\omega_X$ on $Y$
is exactly $p$.
As $H^0(\omega_X)=H^0(\omega_Y)\oplus H^0(\omega_Z)$,
the proof is completed by applying induction to $Z$.
\end{proof}

A pair of smooth points on a nodal curve is said to be \emph{separated}
if they lie on different inseparable components.
\begin{cor}\lbl{separated-are-indep} A separated pair of smooth points, not on any separating lines, of a connected
stable curve, impose independent conditions on the canonical
system.\end{cor}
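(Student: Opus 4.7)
The plan is to reduce to the preceding Corollary, applied on each side of an appropriate separating node. Since $p$ and $q$ lie on different inseparable components, the separation tree of $X$ contains an edge $\theta$ on the path between them; writing $X=\lf{X}\cup_\theta \rt{X}$ with $p\in\lf{X}$ and $q\in\rt{X}$, I would argue that it suffices to produce a section of $\omega_X$ that vanishes identically on $\rt{X}$ (hence at $q$) yet does not vanish at $p$, together with the symmetric section on the other side; these two then span a two-dimensional quotient of $H^0(\omega_X)$, proving independence.

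To construct the first section I would use the local description of the dualizing sheaf at $\theta$: a section of $\omega_X$ is a pair $(\sigma_L,\sigma_R)\in H^0(\omega_{\lf{X}}(\lf{\theta}))\oplus H^0(\omega_{\rt{X}}(\rt{\theta}))$ subject to $\Res_{\lf{\theta}}\sigma_L+\Res_{\rt{\theta}}\sigma_R=0$. Thus the pair $(\sigma_L,0)$ qualifies whenever $\sigma_L\in H^0(\omega_{\lf{X}})\subset H^0(\omega_{\lf{X}}(\lf{\theta}))$ (the subspace of sections with zero residue at $\lf{\theta}$) and $\sigma_L(p)\neq 0$. To guarantee the existence of such a $\sigma_L$ I would apply the preceding Corollary to $\lf{X}$: it gives $p$ as a non-base-point of $\omega_{\lf{X}}$ provided $p$ is not a separating node of $\lf{X}$ (automatic, as $p$ is smooth in $X$) and does not lie on a spine of $\lf{X}$.

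The real work is the graph-theoretic compatibility between $X$ and its half $\lf{X}$. The key observation is that any separating node of $\lf{X}$ is also separating in $X$: in the dual graph, $\theta$ is a bridge between $\lf{X}$ and $\rt{X}$, so a bridge of the $\lf{X}$-side remains a bridge of the whole. From this I would deduce that any smooth rational inseparable component of $\lf{X}$ is inseparable in $X$ as well, hence a spine of $X$. In particular, if $p$ lay on a spine of $\lf{X}$, that spine would be a spine of $X$ containing $p$, contradicting the hypothesis. The same observation handles the auxiliary point that $g(\lf{X})>0$, needed so that $H^0(\omega_{\lf{X}})=H^0(\omega_{\lf{X}}(\lf{\theta}))$ by Riemann--Roch (as in the proof of the preceding Corollary): if $g(\lf{X})=0$ then $\lf{X}$ is a tree of rationals, making the component of $p$ a smooth rational inseparable component of $\lf{X}$ and thus, by the same argument, a spine of $X$.

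The symmetric construction on $\rt{X}$ then produces a section vanishing at $p$ but not at $q$, and together these yield the two independent conditions. The only obstacle of substance, as indicated, is the graph-theoretic verification that separating nodes and spines of the half-curve $\lf{X}$ embed correctly into those of $X$; once this is in hand, the rest is routine manipulation of residues.
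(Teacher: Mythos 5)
Your proof is correct and uses essentially the same mechanism as the paper: a differential supported on the side of a separating node containing $p$, extended by zero across the node via the vanishing-residue condition, and nonvanishing at $p$ because $p$ avoids spines and separating nodes. The paper's (one-line) version restricts directly to the inseparable component containing $p$ and invokes Lemma \ref{insep-is-free}, whereas you restrict to the full half-curve $\lf{X}(\theta)$ and invoke the base-locus corollary, which costs you the (correct) graph-theoretic check that separating nodes and spines of $\lf{X}$ are such for $X$; the substance is the same.
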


\begin{proof}
If $p$ is a smooth point on a non-separating-line inseparable component $C$
of $X$, there
is a by Lemma \ref{insep-is-free} a differential on $C$ nonzero at
$p$, and using the natural map $H^0(\omega_C)\to H^0(\omega_X)$, this may be extended to a dualizing differential on $X$
 that is zero on $\overline{X-C}$.
\end{proof}
 An inseparable semistable curve $X$ is  said to be \emph{ PHEL}
 (acronym for pseudo-hyperelliptic), if the canonical system $|\omega_X|$ fails to define
en embedding. This includes all semistable curves or arithmetic genus 0 or
1. A stable pair $(X,p)$ with $X$ inseparable is said to be
 PHEL if $|\omega_X|$ is ramified at $p$. This terminology
 is temporary because we will see later
 that PHEL is equivalent to another notion,
 to be defined below even for possibly separated curves,
  that we will call 'hyperelliptic'.
\begin{lem} Suppose $(X,p)$ is \phel with $X$ inseparable
 and singular.\par
(i) If $X$ is irreducible of arithmetic genus $>1$, the canonical system of $X$ defines
a 2:1 map to a smooth rational curve, ramified on $p$. \par
(ii) If $X$ is reducible and $p$ lies on an irreducible component $C$ of positive arithmetic genus,
then $C$ meets the rest of $X$ in
precisely 2 points $p_1,p_2$; if $C$ has genus $>1$ then $C$ is \phel
and its $g^1_2$ contains $2p$ and $p_1+p_2$.
\end{lem}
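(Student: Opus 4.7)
The plan is to extract a degree-$2$ pencil on $X$ from the \phel hypothesis and then read off the combinatorial conclusions component-by-component.

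Since $X$ is inseparable, the canonical system $|\omega_X|$ is free at $p$ by Lemma \ref{insep-is-free}, so ramification of $|\omega_X|$ at $p$ is equivalent to $h^0(\omega_X(-2p)) \geq g-1$. Riemann--Roch together with Serre duality give $h^0(\omega_X(-2p)) = g-3+h^0(\O_X(2p))$, so $(X,p)$ is \phel if and only if $h^0(\O_X(2p)) \geq 2$. Any nonconstant section produces a rational function with pole divisor at most $2p$, hence a degree-$2$ morphism $f \colon X \to \P^1$ with $f^{-1}(\infty) = 2p$ as Cartier divisor; in particular $f$ is ramified at $p$.

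For (i), $X$ is irreducible of arithmetic genus $g>1$, so $f$ itself is the required $2{:}1$ map to a smooth rational curve, ramified at $p$. Clifford's inequality forces $h^0(\O_X(2p)) = 2$, and the pullback $f^*\colon H^0(\P^1,\O(g-1)) \to H^0(\omega_X)$ is then an injection between spaces of dimension $g$, hence an isomorphism; the canonical map therefore factors as $X \xrightarrow{f} \P^1 \hookrightarrow \P^{g-1}$ with the second arrow a $(g-1)$-uple Veronese, so it is $2{:}1$ onto a smooth rational normal curve.

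For (ii), $\O_X(2p)|_{D_j}$ is trivial on each irreducible component $D_j$ of $D := \overline{X\setminus C}$, so $f$ is constant on $D_j$; matching at internal nodes of $D$ forces $f$ to be constant, with value $c^k \in \A^1$, on each connected component $D^k$ of $D$. The restriction $f|_C \colon C \to \P^1$ has degree $2$. Since $C \cap D^k$ maps under $f|_C$ into the single point $c^k$ while every fiber of $f|_C$ is a degree-$2$ divisor on $C$, we get $|C\cap D^k|\leq 2$; inseparability rules out $|C\cap D^k|=1$, since a single node joining $C$ to a connected component of $D$ would be separating. Thus $|C\cap D^k|=2$ and this pair is a complete fiber of $|2p|_C$.

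The main obstacle in part (ii) is then the ``precisely $2$'' assertion, i.e.\ the connectedness of $D$; this is forced by the $2$-inseparability of $X$ underlying the paper's framework, since otherwise the two nodes $C\cap D^k$ would form a separating bisep. Once $D$ is known connected, set $\{p_1,p_2\} := C\cap D$; the pencil $|2p|_C$ on $C$ contains both $2p$ (the fiber of $f|_C$ over $\infty$) and $p_1+p_2$ (the fiber over $c$), and if moreover $g(C)>1$ this is a genuine $g^1_2$, exhibiting $C$ as \phel with the asserted $g^1_2$.
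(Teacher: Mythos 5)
Your reduction of the \phel\ condition to $h^0(\O_X(2p))\geq 2$ (via freeness of $|\omega_X|$ at $p$ from Lemma \ref{insep-is-free}, Riemann--Roch and Serre duality) is correct, and the resulting degree-$2$ pencil is a genuinely different route from the paper's, which quotes Proposition \ref{insep-hel} for (i) and, for (ii), the very ampleness of $\omega_C(D)$ for $\deg D\geq 3$ together with restriction from $X$ to $C$. For part (i) your argument is essentially complete except that you use $\omega_X\cong f^*\O_{\P^1}(g-1)$ without justification; on an irreducible nodal curve this still needs a word (e.g.\ $1,s,\dots,s^{g-1}$, with $s$ of pole divisor $2p$, are independent in $H^0(\O_X(2(g-1)p))$, so by Riemann--Roch $h^0(\omega_X(-2(g-1)p))\geq 1$, and a degree-zero line bundle with a nonzero section on an irreducible curve is trivial). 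That is easily repaired.

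Part (ii) has a genuine gap. Your pencil argument correctly shows that each connected component $D^k$ of $D=\overline{X\setminus C}$ meets $C$ in exactly two points forming a fiber of $|2p|_C$, but the assertion ``precisely $2$ points'' requires $D$ to be connected, and you obtain this by invoking ``the $2$-inseparability of $X$.'' The lemma assumes only that $X$ is inseparable, not $2$-inseparable; moreover $2$-inseparability cannot be the intended hypothesis, since for reducible $X$ the very conclusion of (ii) --- $C$ meeting the rest of $X$ in exactly two nodes --- exhibits a separating binode, so under $2$-inseparability case (ii) would be vacuous. Nothing in your argument excludes a configuration with several tails: take $C$ of genus $2$, $p$ a Weierstrass point, and attach two disjoint connected curves $D^1,D^2$ along the fibers $a_1+a_2$ and $b_1+b_2$ of the $g^1_2$ containing $2p$; the resulting $X$ is inseparable, one computes $h^0(\O_X(2p))=2$ directly (a section is $\lambda+\mu h$ on $C$, constant $\lambda$ on $D^1$ and $\lambda+\mu$ on $D^2$), so $(X,p)$ is \phel, yet $C$ meets the rest of $X$ in four points. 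So the connectedness of $D$ is exactly the point that must be argued from the stated hypotheses (or supplied as an extra hypothesis), and your proposal does not do so.
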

\begin{proof} (i) is a special case of Proposition \ref{insep-hel}.
(ii) follows from the fact that for any Cartier divisor  $D$ of degree $>2$, $\omega_C(D)$
is very ample. Of course by inseparability, $C$ cannot meet the rest of $X$
in $<2$ points.
\end{proof}
\newsection{2-inseparables and separation by the Canonical system}\lbl{2-insep}
The purpose of this section is to develop some lemmas needed in the proof of the
main result.
These lemmas concern the relation between 
point separation (very ampleness) properties of the canonical system and some of its twists and
\emph{ separating binodes} of the curve.
They also yield short proofs of results of Catanese \cite{catanese-gorenstein} and others
classifying the 2-inseparable
nodal (more generally, Gorenstein) curves whose dualizing sheaf is not very ample, and
characterization of the limits of
hyperelliptic curves (Proposition \ref{hel-2-insep-thm}). \par
 First some definitions and remarks that will be used throughout the paper.
 \begin{propdef}
\begin{enumerate}\item
A pair of nodes on a nodal curve $X$ is called a \emph{binode}. A binode
$(\theta_1, \theta_2)$ is \emph{properly separating}
 or a \emph{bisep}, for short, if each $\theta_i$
is nonseparating but $X-\{\theta_1, \theta_2\}$ is disconnected, hence
necessarily has exactly 2 connected components whose closures are- at this point, arbitrarily- designated
the \emph{ left} and \emph{right sides} of $(\theta_1, \theta_2)$,
denoted $\lf{X}(\theta_1, \theta_2), \rt{X}(\theta_1, \theta_2)$;
in particular,
$\vtheta=(\theta_1, \theta_2)$ itself will have a left and right preimages denoted $\lf{\vtheta}\subset\lf{X}(\vtheta), 
\rt{\vtheta}\subset\rt{X}(\vtheta)$.
A subcurve of $X$ of the form $\lf{X}(\theta), \rt{X}(\theta)$ for a sep or bisep $\theta$ is simply called a \emph{side} of $X$. We will call
a subset $\theta$ of $X$ a *-sep if it is either a sep or bisep.
\item
A nodal curve is \emph{2-inseparable} if it is not disconnected
by removal of any 2 or fewer nodes.
\item The \emph{2-separation} of $X$ is the blowing-up of all seps and biseps. The connected components of the 2-separation
    are called \emph{2-components} of $X$.

\end{enumerate}
\end{propdef}
\begin{lem}
A 2-component is always inseparable.
 \end{lem}
 \begin{proof} Suppose
a  2-component $Y$ is separable and let $\theta$ be a sep of $Y$, necessarily not a sep of $X$.
If $\vtheta=(\theta_1, \theta_2)$ is a bisep of $X$ contained in $Y$ and $\vtheta\subset \lf{Y}(\theta)$ or
$\vtheta\subset \rt{Y}(\theta)$, $\theta$ is already a sep on $X$. Otherwise, $(\theta_1, \theta)$ is a bisep on $X$,
contradiction. A sep of $X$ contained in $Y$ leads to a similar contradiction.\end{proof}
\begin{lem}\lbl{higher-twist-very-ample}
Let $X$ be a 2-inseparable nodal curve and $\a$ an effective Cartier divisor of degree $\geq 3$ on it. Then $\omega_X(\a)$ is very ample.
\end{lem}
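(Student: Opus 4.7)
The plan is to verify very ampleness of $L := \omega_X(\mathfrak{a})$ by showing $H^1(X, L \otimes I_Z) = 0$ for every length-$2$ subscheme $Z \subset X$; via the long exact sequence this gives surjectivity of $H^0(L) \to H^0(L|_Z)$, as required (note $H^1(L) = H^0(\O_X(-\mathfrak{a}))^* = 0$ because $\mathfrak{a}$ is effective of positive degree, so no nonzero rational function on the connected curve can have divisor bounded below by $\mathfrak{a}$).

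Length-$2$ subschemes fall into four types: (a) a Cartier divisor $D$ of degree $2$ (supported at smooth points, or a transversal direction at a node); (b) the reduced union of a smooth point $p$ and a node $\theta$; (c) two distinct reduced nodes $\theta, \theta'$; (d) a length-$2$ subscheme at a single node $\theta$ with non-Cartier ideal, where $I_Z \supset \mathfrak{m}_\theta^2$, so it suffices to treat $I = \mathfrak{m}_\theta^2$. In the non-Cartier cases I pass to the partial normalization $\nu : X^* \to X$ at the node(s) involved; using the projection formula, the node-adjunction $\nu^* \omega_X = \omega_{X^*}(R)$ (with $R$ the sum of node preimages), and Serre duality on the Gorenstein curve $X^*$, the quantity $H^1(L \otimes I_Z)$ becomes $H^0(X^*, M)^*$ for an explicit line bundle $M$ (writing $\mathfrak{a}^* = \nu^*\mathfrak{a}$): $M = \O_X(D - \mathfrak{a})$ in case (a); $\O_{X^*}(p - \mathfrak{a}^*)$ in (b); $\O_{X^*}(-\mathfrak{a}^*)$ in (c); $\O_{X^*}(R - \mathfrak{a}^*)$ in (d). In every case the cancellations produce $\deg M \leq -1$.

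The core step is to show $H^0(X^*, M) = 0$; here $X^*$ is connected because the normalized nodes are non-separating on $X$, which is even $2$-inseparable. If a nonzero section $s$ fails to vanish identically on any component, then $s$ determines an effective divisor in the class of $M$, forcing $\deg M \geq 0$, a contradiction. Otherwise let $Y \subset X^*$ be the union of components on which $s \not\equiv 0$; then $s|_Y$ vanishes at the set $T^*$ of nodes of $X^*$ joining $Y$ to its complement, so the divisor of $s|_Y$ on $Y$ yields $\deg_Y M \geq |T^*|$. Writing $T$ for the corresponding boundary on $X$ and $e \in \{0,1,2\}$ for the number of normalized nodes that were boundary nodes of $Y$ on $X$, one has $|T^*| = |T| - e$, while $2$-inseparability forces $|T| \geq 3$.

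A short case analysis then produces the contradiction. In case (a), $e = 0$, so $|T^*| \geq 3$, while $\deg_Y M \leq \deg_Y D \leq 2$. In case (d), the three possibilities (both preimages $\theta_1, \theta_2$ lie in $Y$, on opposite sides, or both outside $Y$) yield triples $(\deg_Y R, e, \text{lower bound on } |T^*|) = (2, 0, 3),\, (1, 1, 2),\, (0, 0, 3)$, and $\deg_Y M \leq \deg_Y R$ is strictly less than the corresponding lower bound on $|T^*|$ in each. Cases (b) and (c) are analogous and simpler. The main obstacle is the careful branch-by-branch bookkeeping in case (d), where the slack $3 - e$ in $|T^*|$ provided by $2$-inseparability is precisely what is needed to beat the positive contribution of $R$ to $\deg_Y M$.
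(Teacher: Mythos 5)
Your proof is correct and follows essentially the same route as the paper's: both reduce very ampleness to the vanishing of $H^0$ of a line bundle of negative total degree whose degree on every subcurve is at most $2$, and both derive the contradiction by noting that a nonzero section must vanish identically on some proper subcurve whose boundary, by $2$-inseparability, consists of at least $3$ nodes. The only differences are organizational — you dualize to $H^1$-vanishing and handle the node-supported schemes uniformly via partial normalization and the (slightly stronger) target $\mathfrak m_\theta^2$, whereas the paper blows up only in the irregular case and works with one branch preimage — but the combinatorial core (later isolated in the paper as Lemma \ref{deg-0-lem}) is identical.
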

\begin{proof}
Set $a=\deg(\a), g=p_a(X)$. As $h^0(\a)=g+a-1$ , we must prove
\[h^0(\omega_X(\a-\frak b))=g+a-3\] for every length-2 subscheme
$\frak b$ of $X$. Assume first that $\frak b$ is regular.
 Set $L=\O(\frak b-\a)$. This has degree $<0$ overall and
 $\leq 2$ on each subcurve. By Riemann
Roch and Serre duality,
 what has to be shown is $h^0(L)=0$. If not, let $s\in H^0(L)$
 and $C$ the union of all irreducible components
 of $X$ on which $s$ is not
 identically zero. Because $\deg(L)<0$ while $\deg(L|_C)\geq 0$, there must be a connected component $Y$ of the complementary curve $\overline{X-C}$ such that $\deg(L|_Y)<0$, hence $s$ vanishes identically on $Y$. Because $|Y\cap C|\geq 3$, $s|_C$ is a regular section of $L|_C$ with $\geq 3$ zeros, which contradicts $\deg(L|_C)\leq 2$.\par
This leaves only the case where $\frak b$ is tangent to a branch at a node $\theta$ of $X$. Let $b:X'\to X$ be the blowing up of $\theta$ and $\a'=b^*(\a)$, of degree $a$. Then
$X'$ is inseparable of arithmetic genus $g-1$, and
\[h^0(\omega_X( \a-\frak b))=h^0(\omega_{X'}(\a'-p)\]
with  $p$ the preimage
of $\theta$ corresponding to $\frak b$. What must be proven here is
\[H^0(\O_{X'}(p-\a'))=0.\] This proof is similar to the above, using inseparability of $X'$.
\end{proof}
This result admits a useful partial extension to the inseparable
case. First a definition.
\begin{defn}\lbl{rel-insep-defn}A nodal curve $X$
 is said to be \emph{2-inseparable} (resp. inseparable)
 \emph{relative} to a subset (or divisor or subscheme...)
 $\a$ if given a any bisep (resp.
 sep) $\theta$ of $X$, (the support of) $\a$ meets both $\lf{X}(\theta)$ and
 $\rt{X}(\theta)$. Equivalent terminology is that$(X, \a)$
 is 2-inseparable or inseparable.
 \end{defn}
A useful remark about this notion
 is the following.
\begin{lem}
Let $Y$ be a 2-component of $X$ and $\vtheta$ a bisep of $X$
contained in $Y$. Then $Y$ is inseparable
 and 2-inseparable relative to
$\vtheta$.
\end{lem}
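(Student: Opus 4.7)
The plan is to handle the two assertions separately. The first---that $Y$ is inseparable---is essentially already contained in the parenthetical remark preceding Lemma~\ref{higher-twist-very-ample}, which establishes that every 2-component of $X$ is inseparable; I would simply invoke this.

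For the second assertion, that $Y$ is 2-inseparable relative to $\vtheta$, I would argue by contradiction. The plan is to suppose that $\vtheta'=(\theta_1',\theta_2')$ is a bisep of $Y$ (regarded as a standalone nodal curve) with the support of $\vtheta$ in $Y$ lying entirely on one side, say $\vtheta\subset\lf{Y}(\vtheta')$, and then derive a contradiction by showing that $\vtheta'$ is itself a bisep of $X$---which is impossible, since $\theta_1',\theta_2'$ are internal nodes of the 2-component $Y$ and hence were not blown up in the 2-separation of $X$.

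To establish that $\vtheta'$ is a bisep of $X$, two properties need to be verified. First, each $\theta_i'$ is nonseparating in $X$: this is immediate, since $\theta_i'$ is a node of $Y$ and therefore not a sep of $X$ (otherwise it would have been blown up in the 2-separation). Second, $X\setminus\{\theta_1',\theta_2'\}$ must be disconnected. Here I would exploit the tree-like structure of the 2-component decomposition of $X$: viewing the 2-components as vertices and contracting each bisep of $X$ to a single super-edge yields a tree, by a standard argument about *-seps. The given $\vtheta$ is one such super-edge at $Y$, and by hypothesis its attachment in $Y$ lies in $\lf{Y}(\vtheta')$, so the whole subtree attached to $Y$ through $\vtheta$ glues to $\lf{Y}(\vtheta')$ after the removal; combined with a parallel analysis of the other super-edges at $Y$, this should deliver the required disconnection.

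The hard part will be the last step. One must rule out the possibility that some other super-edge at $Y$ straddles $\vtheta'$ (that is, has its two bisep-nodes on opposite sides of $\vtheta'$ in $Y$), since such a super-edge would bridge $\lf{Y}(\vtheta')$ to $\rt{Y}(\vtheta')$ after removal of $\vtheta'$ and so prevent disconnection. This is the natural analogue of the dichotomy exploited in the parenthetical proof that 2-components are inseparable---with the bisep $\vtheta$ now playing the role of the separating node there---and I expect the argument to combine the tree structure of the 2-component graph with the maximality of $Y$ as a 2-component.
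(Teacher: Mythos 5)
Your handling of the first assertion and your overall strategy for the second coincide with the paper's: the paper's entire argument for the second claim is the single sentence ``if $\vtheta\subset \lf{Y}(\vtheta')$ or $\vtheta\subset \rt{Y}(\vtheta')$, then $\vtheta'$ would be a bisep of $X$ itself, against our hypothesis that $Y$ is a 2-component.'' You have correctly isolated the one step this leaves unjustified --- that $X\setminus\vtheta'$ is actually disconnected --- and correctly named the obstruction, namely a *-sep of $X$ other than $\vtheta$ whose two marks lie on opposite sides of $\vtheta'$ in $Y$. But you then stop, saying only that you ``expect'' the tree structure and maximality to rule this out. That is the gap, and it is a substantive one: the straddling configuration cannot be excluded from the stated hypotheses alone.

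Concretely, take $X=A\cup B\cup V\cup W$ with $A,B,V,W$ smooth of positive genus, where $A$ and $B$ meet in two nodes $\theta'_1,\theta'_2$, where $V$ meets $A$ in two nodes $\tau_1,\tau_2$, and where $W$ meets $A$ in one node $\sigma_1$ and $B$ in one node $\sigma_2$. One checks on the dual graph that $X$ has no seps and that its only biseps are $(\tau_1,\tau_2)$ and $(\sigma_1,\sigma_2)$; in particular $(\theta'_1,\theta'_2)$ is not a bisep of $X$, since $A$ and $B$ remain joined through $W$. Hence $Y=A\cup_{\theta'_1,\theta'_2}B$ is a 2-component, $\vtheta=(\tau_1,\tau_2)$ is a bisep of $X$ contained in $Y$, and $\vtheta'=(\theta'_1,\theta'_2)$ is a bisep of $Y$ with $\vtheta$ lying entirely on the side $A$. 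So $Y$ is not 2-inseparable relative to $\vtheta$, and neither your argument nor the paper's can be completed at this level of generality. Both become correct once one assumes that no other *-sep of $X$ straddles a bisep of $Y$ --- most simply, that $\vtheta$ is the only *-sep of $X$ meeting $Y$, which is exactly the situation in which the paper later uses this fact (e.g.\ in the proof of Theorem \ref{sepcanonical-thm}, where $Y$ is an end-vertex containing no other *-sep of $X$). Under that hypothesis $\overline{X\setminus Y}$ is attached to $Y$ only along $\lf{\vtheta}\subset\lf{Y}(\vtheta')$, so removing $\vtheta'$ isolates $\rt{Y}(\vtheta')$, $\vtheta'$ is a bisep of $X$, and the contradiction you set up goes through. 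You should either add that hypothesis explicitly or carry out the disconnection argument in the restricted setting; as written, the ``hard part'' you defer is genuinely false in general.
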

\begin{proof}
We have seen that
 $Y$ is inseparable. Let $\vtheta'$ be a 2-sep of $Y$.
If $\vtheta\subset \lf{Y}(\vtheta')$ or $\vtheta\subset \rt{Y}(\vtheta')$, then $\vtheta'$ would be a bisep of $X$
itself, against our hypothesis that $Y$ is a 2-component.
\end{proof}
The following Lemma on nonpositive bundles will prove useful.
\begin{lem}\lbl{deg-0-lem}
Let $L$ be a nontrivial line bundle of degree
$\leq 0$ on an inseparable curve
$X$, such that $L$ has degree $\leq 2$ on any subcurve of $X$
and degree $\leq 1$ on either side of any bisep.
Then $H^0(L)=0$.
\end{lem}
\begin{proof}
If $s$ is a nonzero section vanishing somewhere,
let $Y_1, Y_2\subset X$ be the union of all irreducible
components on which $s$ is (resp. is not) identically
zero. Since $s$ is a regular section of $L$ over $Y_2$,
it cannot have more than 3 zeros there.
Also, $Y_1\neq\emptyset$. Therefore
$Y_1$ and $Y_2$ meet in at most, hence exactly, 2 points
which constitute a bisep. By the assumption on biseps, $L$ has
degree at most 1 on $Y_2$, so again we have a contradiction.

\end{proof}
Now we can prove the desired partial extension of Lemma \ref{higher-twist-very-ample}:
\begin{lem}\lbl{omega+3-on-2-sep-lem}
Let $X$ be an inseparable nodal curve and $\a$ an effective
smoothly supported divisor of degree $\geq 3$.
Assume $X$ is 2-inseparable relative to $\a$.
Then $\omega_X(\a)$ is very ample.
\end{lem}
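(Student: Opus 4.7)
The plan is to follow the template of Lemma~\ref{higher-twist-very-ample}, reducing the very-ampleness of $\omega_X(\a)$ to verifying
\[
h^0(\omega_X(\a)\otimes\I_{\frak b})=p_a(X)+\deg(\a)-3
\]
for every length-$2$ subscheme $\frak b\subset X$. I would split on whether $\frak b$ is regular (supported at smooth points) or tangent to one branch at a node $\theta$. Exactly as in that lemma, Serre duality plus Riemann--Roch reduces these to showing $H^0(L)=0$ for $L=\O_X(\frak b-\a)$ in the regular case, and $H^0(M)=0$ for $M=\O_{X'}(p-\a')$ in the tangent case, where $b:X'\to X$ is the blow-up of $\theta$, $\a'=b^*\a$, and $p$ is the preimage of $\theta$ corresponding to the tangent direction of $\frak b$.

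The regular case I would dispatch by invoking Lemma~\ref{deg-0-lem} applied to $L$. The total degree of $L$ is $2-\deg(\a)<0$; on any subcurve $C$ one has $\deg L|_C\le 2$; and on any side $Y$ of a bisep, the relative $2$-inseparability hypothesis (Definition~\ref{rel-insep-defn}) gives $\deg(\a|_Y)\ge 1$, hence $\deg L|_Y\le 1$. All hypotheses of Lemma~\ref{deg-0-lem} are then satisfied, so $H^0(L)=0$.

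The main obstacle will be the tangent case, because $X'$ is no longer necessarily inseparable: each node $\nu$ for which $(\nu,\theta)$ is a bisep of $X$ becomes a sep in $X'$, so Lemma~\ref{deg-0-lem} cannot be applied to $X'$ directly. My strategy is to analyze a hypothetical nonzero section $s\in H^0(M)$ by hand. Since $M$ has total degree $1-\deg(\a)\le-2$, letting $Y_1$ be the union of components on which $s$ vanishes identically and $Y_2$ the complement, both are nonempty. A short degree-versus-zeros computation on each connected component of $Y_2$ should force $Y_2$ to be connected and to contain $p$ (else that component would be isolated in $X'$), and then to pin down $|Y_1\cap Y_2|=1$ together with $\deg(\a'|_{Y_2})=0$.

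To close, I would trace the unique node $\nu\in Y_1\cap Y_2$ back to $X$: since $\nu$ is a sep of $X'$ but not of $X$ (which is inseparable), the pair $(\nu,\theta)$ must be a bisep of $X$, with sides mapping under $b$ to $Y_1$ and $Y_2$. The relative $2$-inseparability of $(X,\a)$, together with the fact that $\a$ is smoothly supported and therefore pulls back without loss on the $Y_2$-side, will force $\deg(\a'|_{Y_2})\ge 1$, contradicting the equality $\deg(\a'|_{Y_2})=0$ just derived. This contradiction closes the tangent case and finishes the proof.
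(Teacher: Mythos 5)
Your proposal is correct, and the regular case is identical to the paper's: both apply Lemma~\ref{deg-0-lem} to $L=\O(\frak b-\a)$, using relative $2$-inseparability to get degree $\leq 1$ on the sides of biseps. The divergence is in the tangent case, and there your treatment is actually the more careful one. The paper reduces to the case where $\frak b$ is tangent at a node $\theta_1$ belonging to a bisep $(\theta_1,\theta_2)$ and then asserts that the blowup $X'$ is inseparable so that Lemma~\ref{deg-0-lem} applies to $\O_{X'}(p-\a')$; but as you observe, in exactly that case $\theta_2$ becomes a separating node of $X'$, so the lemma's hypothesis fails and the assertion as written is not justified. Your workaround --- letting $Y_2$ be the union of components where a putative section is not identically zero, forcing $|Y_1\cap Y_2|=1$, $p\in Y_2$ and $\deg(\a'|_{Y_2})=0$ by counting zeros against $\deg M|_{Y_2}\leq 1$, then blowing down to recognize $(\nu,\theta_1)$ as a bisep of $X$ whose $Y_2$-side must meet $\a$ --- is sound (the one implicit point, that $\nu$ and $\theta_1$ are individually nonseparating on $X$, is automatic from inseparability of $X$), and it is essentially the argument the paper itself sketches in a suppressed passage: there one notes that $\a'$ meets $\rt{X'}(\theta_2)$, forcing the section to vanish on that whole side and hence at $\lf{\theta_2}$, after which inseparability of $\lf{X'}(\theta_2)$ finishes the job. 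So the two routes buy the same thing, but yours closes a genuine gap in the argument as printed rather than inheriting it.
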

\begin{proof}
We follow the outline and notations of the proof of Lemma \ref{higher-twist-very-ample}.
We may assume $X$ is 2-separable. Consider first
the case where $\fkb$ is regular. Then $L=\O(\fkb-\a)$
satisfies the hypotheses of Lemma \ref{deg-0-lem}, hence
$H^0(L)=0$, as desired.
Now to complete the proof, following the argument of the Lemma \ref{higher-twist-very-ample},
it will suffice to consider the case where $\frak b$
is tangent to the left side at a node $\theta_1$
belonging to a bisep $(\theta_1, \theta_2)$.
Using the above
notations, we need to show $H^0(\O_X'(p-\a'))=0$
where $p\in\lf{X'}(\theta_2)$. Now $X'$
is inseparable, and clearly $L= \O_X'(p-\a')$ satisfies
the hypotheses of Lemma \ref{deg-0-lem}, so we are done.
\end{proof}

According to Caporaso, an \emph{$r$-vine} is
a nodal curve of genus $g=r-1$ of the form
\[ (\P^1, p_1,...,p_r)\bigcup\limits_{\twostack{p_i\leftrightarrow q_i}
{i=1,...,r}}
 (\P^1, q_1,...,q_r).\]
 An $r$-vine is said to be an \emph{interlace}
 if
the  pair of  pointed $\P^1$'s are isomorphic (this notion,
but not the term, already occur in Catanese's article
\cite{catanese-gorenstein}). Note
that an interlace $X$ admits a 2:1 morphism
$\eta$ to $\P^1$, ramified precisely over
the nodes and by Riemann-Hurwitz we have
\[\omega_X\sim(g-1)\eta\inv(pt).\]
Because we can always assume $(p_1,p_2,p_3)=(q_1,q_2,q_3)=(0,1,\infty)$,
the parameter space for $r$-vines is an open subset of $(\C^{g-2})^2$ and that of $r$-interlaces is the diagonal.\par

An irreducible nodal curve is said to be \emph{hyperelliptic}
 if it is obtained
from a smooth hyperelliptic curve of genus $>1$ by identifying some pairs belonging to the
$g^1_2$, or obtained from a irreducible nodal or smooth curve  of arithmetic genus 1 by identifying some
pairs in a single $g^1_2$.
The following is an initial characterization of 2-inseparable
hyperelliptics; the definitive result is Proposition \ref{insep-hel} below.
\begin{lem}\lbl{hypell-or-interlace}
Let $p,q$ be smooth, not necessarily distinct points on a 2-inseparable curve $X$.
Then $p+q$ imposes independent conditions on $\omega_X$ unless either\par
(i) $X$ is irreducible hyperelliptic with $p+q\in g^1_2$; or\par
(ii) $X$ is an interlace and $p+q=\eta\inv(pt)$.
\end{lem}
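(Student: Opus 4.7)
The plan is to dualize: by Serre duality and Riemann--Roch, $p+q$ fails to impose independent conditions on $|\omega_X|$ precisely when $h^0(\O_X(p+q))\geq 2$, i.e.\ when $p+q$ belongs to a pencil. We may therefore assume $L:=\O_X(p+q)$ admits a nonconstant section, and aim to derive (i) or (ii).

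If $X$ is irreducible, a degree-$2$ pencil is a $g^1_2$ containing $p+q$, so $X$ is irreducible hyperelliptic by the definition recalled above (which covers both $p_a>1$ and $p_a=1$), giving (i). Assume henceforth $X$ is reducible. The key claim is that no nonzero section $s\in H^0(L)$ can vanish identically on a full irreducible component. Indeed, let $Y_1$ be the union of components on which $s\equiv 0$ and $Y_2$ the closure of its complement. Then $s|_{Y_2}$ is a regular section of $L|_{Y_2}$, whose vanishing subscheme has total degree $\deg L|_{Y_2}\leq\deg L=2$; but this subscheme contains $Y_1\cap Y_2$, a set of nodes whose removal disconnects $X$, and 2-inseparability forces $|Y_1\cap Y_2|\geq 3$ -- a contradiction. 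Hence the restriction $H^0(L)\to H^0(L|_C)$ is injective for every component $C$, so $h^0(L|_C)\geq 2$ and $\deg L|_C\geq 1$; combined with $\sum_C\deg L|_C=2$, $X$ consists of exactly two components $C_1,C_2$, each with $\deg L|_{C_i}=1$.

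Next I claim each $C_i\cong\P^1$. A line bundle $M$ of degree $1$ on an irreducible nodal curve with normalization $\P^1$ and $\delta\geq 1$ nodes pulls back to $\O_{\P^1}(1)$; a $2$-dimensional subspace of $H^0(\O_{\P^1}(1))$ descends to $M$ only if at each node's two distinct preimages the sections agree up to a fixed scalar, impossible since linear forms separate points. So $\delta=0$ and $C_i$ is smooth rational. Since $p,q$ are smooth points contributing positively to $\deg L|_{C_i}=1$, they lie on different components; writing $r=|C_1\cap C_2|$, 2-inseparability forces $r\geq 3$ (which also guarantees semistability of each $\P^1$).

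Finally, the pencil $|L|$ defines a morphism $\phi:X\to\P^1$; each restriction $H^0(L)\to H^0(L|_{C_i})$ is an injection of $2$-dimensional spaces, hence an isomorphism, so $\phi|_{C_i}:C_i\to\P^1$ is a degree-$1$ morphism, i.e.\ an isomorphism. Compatibility of $\phi$ at each node $(p_j,q_j)$ forces $\phi|_{C_1}(p_j)=\phi|_{C_2}(q_j)$ for all $j$, realizing an isomorphism of pointed $\P^1$'s $(C_1,p_1,\dots,p_r)\cong(C_2,q_1,\dots,q_r)$ and identifying $X$ as an $r$-interlace with $\phi=\eta$. Since $p+q$ is the zero divisor of a section of $L$, we conclude $p+q=\eta^{-1}(\eta(p))$, giving (ii). The main obstacle is the section-vanishing claim in paragraph two, where 2-inseparability does the essential work.
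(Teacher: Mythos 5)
Your proof is correct, and it reaches the conclusion by a genuinely different route in the reducible case. The paper also begins by dualizing to $h^0(\O_X(p+q))=2$, but then splits according to whether $p,q$ lie on one component or two, and in each branch goes back to the canonical side: in the first it invokes the surjection $H^0(\omega_{C\cup Y_1})\to H^0(\omega_C(\a))$ together with very ampleness of $\omega_C(\a)$ for $|\a|\geq 3$ (Lemma \ref{higher-twist-very-ample}); in the second it rules out a third component by producing differentials that separate $p$ from $q$, using freeness of $\omega_C(Y\cap C)$ and of $\omega_X$. You instead stay entirely on the $\O_X(p+q)$ side and run a single degree count in the spirit of the paper's Lemma \ref{deg-0-lem}: a section vanishing on a component would force $s|_{Y_2}$ to have at least three zeros while $\deg L|_{Y_2}\leq 2$, whence every component restriction is injective, every component carries degree $\geq 1$, and the two-component $\P^1\cup\P^1$ structure falls out at once. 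This is more uniform (the same-component subcase is absorbed automatically, since a degree-$0$ component with $h^0\geq 2$ is impossible) and avoids the auxiliary very-ampleness and freeness lemmas; the paper's version, by contrast, recycles machinery it has already built and is shorter step by step. Your write-up is also more explicit than the paper's about why $X$ is literally an interlace, i.e.\ why the two pointed $\P^1$'s are isomorphic as pointed curves via $\phi$. Two small points you should tighten: the deduction that each $C_i\cong\P^1$ should note first that the normalization has genus $0$ (a degree-$1$ bundle with two sections on a positive-genus smooth curve is impossible), rather than assuming the normalization is $\P^1$; and in the irreducible case the passage from ``$X$ carries a $g^1_2$'' to ``$X$ is irreducible hyperelliptic in the sense of the definition'' (i.e.\ obtained from a smooth hyperelliptic or genus-$1$ curve by identifying $g^1_2$-pairs) deserves a sentence, though the paper is equally brief there.
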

\begin{proof}
By Riemann-Roch, $h^0(\O(p+q))=2$.
Assume $p,q$ are in a single irreducible component $C$.
Suppose $X$ is reducible and let $Y_1$ be a connected component of the complementary curve $Y=\overline{X-C}$. The $Y_1$ meets $C$ in a set $\a$ of at least 3 points, and we have an exact sequence
\[\exseq{\omega_{Y_1}}{\omega_{C\cup Y_1}}{\omega_C(\a)}\]
As $H^1(\omega_{Y_1})\simeq H^1(\omega_{C\cup Y_1})$ (both curves being connected), the map
$H^0(\omega_{C\cup Y_1})\to H^0(\omega_C(\a))$ is surjective.
But by the Lemma above, $p+q$ clearly imposes 2 conditions on
$\omega_C(\a)$, therefore also on $\omega_X$, contradiction.
Therefore $Y_1$ cannot exist and $X$ is irreducible.
As $|p+q|$ induces a degree-2 pencil on the normalization of $X$, or on a genus-1 partial normalization if $X$ is rational,
it is easy to see
that $X$ is irreducible hyperelliptic. \par
If $p\in C, q\in D$ are in different irreducible
components, connectedness
ensures injectivity of
the restriction maps \[H^0(\O_X(p+q))\to H^0(\O_C(p)),
H^0(\O_X(p+q))\to H^0(\O_D(q)).\] Therefore
 \[h^0(\O_C(p)),
h^0(\O_D(q))\geq 2,\] so $C,D$ are smooth
rational and there is a 2:1 map $C\cup D\to \P^1$.
We claim $C\cup D=X$. If not,
let $Y$ be a connected component of the complementary curve. Because $Y$ meets $C\cup D$ in at least 3 points, we may assume $Y$ meets $C$ in at least 2 points, therefore $\omega_C(Y\cap C)$ has nonnegative degree, hence is free. Then from surjectivity of
\[ H^0(\omega_{C\cup Y})\to H^0(\omega_C(C\cap Y))\]
we conclude there is a differential on $C\cup Y$ not vanishing at $p$, hence a section of $\omega_X$ zero on $D$ and nonvanishing at $p$. On the other hand
because $X$ is inseparable, $\omega_X$ is free so there is a section of $\omega_X$ nonvanishing at $q$. Therefore $p,q$ impose independent conditions on $\omega_X$, contradiction.\par
Therefore $X=C\cup D$ is an interlace.

\end{proof} The above argument proves more generally the following
\begin{lem}
Let $X,C,Y$ be 2-inseparable nodal curves with $X=C\cup Y$.
Then\par (i) the restriction map $H^0(\omega_X)\to H^0(\omega_C(C\cap Y))$ is surjective;\par
(ii) if $|C\cap Y|\geq 3$, then $|\omega_X|$ is very ample on the interior of $C$, i.e. $C\setminus C\cap Y$.
\end{lem}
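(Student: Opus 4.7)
The overall strategy is to combine a short exact sequence argument, essentially the same one that appears in the proof of Lemma \ref{hypell-or-interlace}, with the very ampleness result of Lemma \ref{higher-twist-very-ample} to transfer separation properties from $C$ to the ambient curve $X$. The whole proof will revolve around the short exact sequence
\[0\to\omega_Y\to\omega_X\to\omega_C(C\cap Y)\to 0,\]
obtained by restriction of $\omega_X$ to $C$; the kernel is identified with $\omega_Y$ because a section of $\omega_X$ vanishing on $C$ must have zero residues at the nodes in $C\cap Y$ and hence extends to a regular differential on $Y$.

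For part (i), I would pass to the long exact cohomology sequence and show that the coboundary vanishes, equivalently that $H^1(\omega_Y)\to H^1(\omega_X)$ is injective. Since $X$ and $Y$ are both 2-inseparable and hence connected, Serre duality gives $h^1(\omega_X)=h^1(\omega_Y)=1$, so it suffices to show the map is not zero. For this I would prove $H^1(\omega_C(C\cap Y))=0$, which follows from Serre duality together with the vanishing $H^0(\mathcal{O}_C(-C\cap Y))=0$ on the connected curve $C$; the latter holds because $C\cap Y$ is a nonempty effective divisor, and nonemptiness is forced by connectedness of $X=C\cup Y$. The exact sequence then forces surjectivity at the $H^0$-level.

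For part (ii), the hypothesis $|C\cap Y|\geq 3$ is exactly what is needed to apply Lemma \ref{higher-twist-very-ample} to the 2-inseparable curve $C$ with $\a=C\cap Y$, yielding very ampleness of $\omega_C(\a)$ on all of $C$. Given two distinct interior points $p,q\in C\setminus(C\cap Y)$, or a single such point equipped with a tangent vector, I would pick a section of $\omega_C(\a)$ realizing the required separation and lift it to a section of $\omega_X$ using (i); since $p$ and $q$ are smooth points of $X$ lying off $Y$, the restriction $\omega_X\to\omega_C(\a)$ is a local isomorphism there, so vanishing of values and of first jets is preserved under the lift. The only genuinely nontrivial step is the cohomology vanishing used in (i); once that is in hand, (ii) reduces to a direct application of Lemma \ref{higher-twist-very-ample} followed by lifting.
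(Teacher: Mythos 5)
Your proof is correct and follows essentially the same route as the paper, which establishes this lemma by pointing back to the argument of Lemma \ref{hypell-or-interlace}: the exact sequence $0\to\omega_Y\to\omega_X\to\omega_C(C\cap Y)\to 0$, surjectivity on $H^0$ via the identification of the one-dimensional $H^1$'s of the connected curves $Y$ and $X$, and then Lemma \ref{higher-twist-very-ample} applied to $\omega_C(C\cap Y)$ for part (ii). Your variant of deducing the $H^1$-injectivity from the vanishing $H^1(\omega_C(C\cap Y))=0$ is an equivalent one-line Serre-duality reformulation of the same step.
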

Next we extend the result of Lemma \ref{higher-twist-very-ample} to the case of twisting by
a degree-2 divisor:
\begin{lem}\lbl{omega+2}
Let $X$ be a 2-inseparable nodal curve and $\a$ an effective Cartier divisor of degree $2$ on it. Then $\omega_X(\a)$ is very ample off $\a$, i.e. separates every length-2 scheme except $\a$, unless $(X,\a)$ is hyperelliptic.
\end{lem}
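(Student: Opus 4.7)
The plan is to follow the blueprint of Lemma \ref{higher-twist-very-ample}, with Lemma \ref{deg-0-lem} replacing the direct subcurve analysis at the key step, and Lemma \ref{hypell-or-interlace} providing the hyperelliptic identification.

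Set $g=p_a(X)$. Since $\deg(-\a)=-2<0$, Riemann--Roch gives $h^0(\omega_X(\a))=g+1$. As in Lemma \ref{higher-twist-very-ample}, for a regular length-2 subscheme $\frak b$ the failure of $\omega_X(\a)$ to separate $\frak b$ is equivalent, by Serre duality, to $h^0(\O(\frak b-\a))\geq 1$. Set $L=\O(\frak b-\a)$, a line bundle of degree $0$ whose restriction to any subcurve has degree $\leq 2$. I would then invoke Lemma \ref{deg-0-lem}: its bisep hypothesis is vacuous because $X$ is 2-inseparable, so $L$ is either trivial or has no sections, and failure of separation forces $\frak b\sim\a$. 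Thus $h^0(\O(\a))\geq 2$, exhibiting $\a$ as a member of a $g^1_2$ on $X$.

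Assuming next that $\a$ is smoothly supported, write $\a=p+q$. By Riemann--Roch and Serre duality, the existence of such a $g^1_2$ is equivalent to $p+q$ failing to impose independent conditions on $\omega_X$, so Lemma \ref{hypell-or-interlace} applies: $X$ is either irreducible hyperelliptic with $\a$ in its $g^1_2$, or an interlace with $\a=\eta\inv(pt)$; either way $(X,\a)$ is hyperelliptic.

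Finally, the nonregular cases---where $\frak b$ or $\a$ is tangent to a branch at a node $\theta$---reduce to the regular case by blowing up $\theta$, exactly as in the final paragraph of the proof of Lemma \ref{higher-twist-very-ample}. On the blowup $X'$ of genus $g-1$, a parallel degree-$0$ line bundle governs separation, Lemma \ref{deg-0-lem} again forces triviality, and the induced pencil on $X'$ descends to the required hyperelliptic structure on $(X,\a)$. The main obstacle I anticipate is in this last step: one must verify that $X'$ remains inseparable (so that Lemma \ref{deg-0-lem} applies), and then match the hyperelliptic structure obtained on $X'$ back to the paper's definition of $(X,\a)$ hyperelliptic---a bookkeeping task around the blown-up node, but not a deep one.
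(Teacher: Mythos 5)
Your proof is correct and follows essentially the same route as the paper's: reduce separation of a regular $\frak b$ to the vanishing of $H^0(\O(\frak b-\a))$, split according to whether a nonzero section vanishes nowhere (giving $\O(\a)\cong\O(\frak b)$ and hence the hyperelliptic alternative via Lemma \ref{hypell-or-interlace}) or somewhere (excluded by the subcurve argument, which you package as Lemma \ref{deg-0-lem} where the paper simply repeats the argument of Lemma \ref{higher-twist-very-ample}), and blow up the node for irregular schemes. The one small inaccuracy is in your last paragraph: on the blowup the relevant bundle $\O_{X'}(p-\a')$ has degree $-1$, so Lemma \ref{deg-0-lem} forces $H^0=0$ outright rather than "triviality," and the irregular case therefore contributes no hyperelliptic exception and there is no pencil on $X'$ that needs to be descended.
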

\begin{proof}
Actually, Proposition \ref{insep-hel} below- which is proved
independently of Lemma \ref{omega+2}- already implies $X$ is hyperelliptic and this could be used to show $\a$ is a
hyperelliptic divisor. But we take a different tack instead,
and follow the outline and notations of the proof of Lemma \ref{higher-twist-very-ample}. 
First let  $\frak b$ be any regular
length-2 scheme other than
$\a$ and $L=\O(\frak b-\a)$, this time of degree 0. If $h^0(L)=0$, $\omega_X(\a)$ separates $\frak b$. If $s\neq 0\in H^0(L)$ vanishes nowhere, then
$\O(\a)=\O(\frak b)$ and $\a$ is a hyperelliptic divisor by
Lemma \ref{hypell-or-interlace}. If $s$ vanishes somewhere,
it is identically zero on some component, and an argument
as in the proof of Lemma \ref{higher-twist-very-ample} applies. This settles the case $\frak b$ regular. The case $\frak b$
irregular is identical to that of Lemma \ref{hypell-or-interlace}.
\end{proof}
We can partly extend this to the inseparable case:
\begin{lem}\lbl{omega+2-insep-lem}
Let $Y$ be an inseparable curve, $p\neq q$ smooth points
such that $Y$ is 2-inseparable relative to $p+q$.
Then $\omega_Y(p+q)$ is very ample off $p+q$,
unless $Y$ is 2-inseparable and $(Y, p+q)$ is hyperelliptic.
\end{lem}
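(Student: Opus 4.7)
The plan is to transpose the two-case template of Lemmas \ref{higher-twist-very-ample} and \ref{omega+3-on-2-sep-lem} to the twist by a degree-two divisor, using Lemma \ref{deg-0-lem} in place of its earlier, less refined avatar. First, by Riemann-Roch and Serre duality applied to $\omega_Y(p+q)$, separation of a length-2 subscheme $\frak b$ by $|\omega_Y(p+q)|$ is equivalent to the vanishing $H^0(L)=0$ where $L=\O_Y(\frak b - p - q)$, a line bundle of degree $0$. So the goal is to establish this vanishing for every $\frak b\neq p+q$, with the stated exception.

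First I would treat the regular case, i.e.\ $\frak b$ a reduced pair or a smooth tangent direction at a smooth point. I would verify the three hypotheses of Lemma \ref{deg-0-lem} for $L$ on the inseparable curve $Y$: the total degree of $L$ is $0$; on any subcurve $C\subset Y$, $\deg L|_C\leq \deg(\frak b|_C)\leq 2$; and on either side of any bisep $\vtheta$ of $Y$, the assumption that $Y$ is 2-inseparable relative to $p+q$ (Definition \ref{rel-insep-defn}) guarantees $\deg((p+q)|_{\text{side}})\geq 1$, so $\deg L|_{\text{side}}\leq 2-1 = 1$. Lemma \ref{deg-0-lem} then yields $H^0(L)=0$ unless $L\cong\O_Y$. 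In the latter triviality case, $\frak b\sim p+q$ with $\frak b\neq p+q$ forces $h^0(\O_Y(p+q))\geq 2$, so $\O_Y(p+q)$ defines a $g^1_2$ and $(Y,p+q)$ falls into the hyperelliptic exception.

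Next I would handle the irregular case ($\frak b$ tangent to a branch at some node $\theta$) in analogy with the second half of the proof of Lemma \ref{omega+3-on-2-sep-lem}. Let $b:Y'\to Y$ be the blowup of $\theta$, $\a'=b^*(p+q)$, and $p'$ the preimage of $\theta$ indicated by $\frak b$; then
\[h^0(\omega_Y(p+q-\frak b)) = h^0(\omega_{Y'}(\a'-p')),\]
and by Serre duality the task reduces to showing $H^0(\O_{Y'}(p'-\a'))=0$. The curve $Y'$ remains inseparable, and every bisep of $Y'$ (inherited from $Y$, or arising from a bisep of $Y$ one of whose nodes was $\theta$) is still met by $\a'$ on both sides, so $\O_{Y'}(p'-\a')$ again meets the hypotheses of Lemma \ref{deg-0-lem}, closing the argument.

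The main obstacle is the clean disposal of the $L\cong\O_Y$ branch in the presence of biseps of $Y$: one must show that this combination forces $Y$ itself to be 2-inseparable, so that one genuinely lands in the 2-inseparable hyperelliptic exception. The key observation is that triviality of $L$ together with a nowhere-zero global section restricts, via the hypothesis that $p+q$ has exactly one point on each side of every bisep $\vtheta$, to nowhere-zero sections of the degree-zero bundles $\O_{\lf{Y}(\vtheta)}(p'-p)$ and $\O_{\rt{Y}(\vtheta)}(q'-q)$; a structural analysis in the spirit of Lemma \ref{hypell-or-interlace}, combined with the characterization of inseparable hyperelliptics from Proposition \ref{insep-hel}, identifies the resulting configuration with the stated exception, with degenerate pieces such as interlaces absorbed into the hyperelliptic terminology.
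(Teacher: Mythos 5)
Your main line is correct and rests on the same key tool as the paper, namely Lemma \ref{deg-0-lem}, but you organize the argument quite differently. The paper first splits on whether $Y$ is 2-inseparable: if so it simply quotes Lemma \ref{omega+2} (whose proof disposes of the nowhere-vanishing-section case via Lemma \ref{hypell-or-interlace}); if not, it proves there is no exception at all, via a further case division on how $\frak b$ distributes over the 2-components, with an ad hoc vanishing argument when $\frak b$ concentrates on one 2-component. Your observation that the hypotheses of Lemma \ref{deg-0-lem} for $L=\O(\frak b-p-q)$ are \emph{automatic} from relative 2-inseparability --- in particular $\deg(L|_{Y_i})\le 2-1=1$ on each side $Y_i$ of each bisep because $p+q$ places exactly one point on each side --- collapses all of the paper's sub-cases into a single application of that lemma. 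This is a genuine streamlining and it does subsume the paper's explicit vanishing argument.

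The cost of the reorganization is the branch $L\cong\O_Y$. The paper only ever has to confront $h^0(\O_Y(p+q))\ge 2$ when $Y$ is 2-inseparable, where Lemma \ref{hypell-or-interlace} classifies the outcome; your uniform argument obliges you to show that for 2-separable $Y$ a pencil $|p+q|$ with one point on each side of a bisep cannot occur, and your closing paragraph only gestures at this. The fact is provable in the intended range: a nonconstant $f\in H^0(\O_Y(p+q))$ restricts to the two sides $Y_1\ni p$, $Y_2\ni q$ as sections of $\O_{Y_1}(p)$, $\O_{Y_2}(q)$, at least one of which is nonconstant since they agree at both nodes; Riemann--Roch then forces that point to be a base point of the canonical system of its side, hence (by the corollary to Lemma \ref{insep-is-free}) to lie on a spine, which is incompatible with semistability in genus $\ge 2$. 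As written, though, this is the one substantive step you have not supplied, and it is exactly the step the paper's decomposition is designed to avoid. A smaller remark: in the irregular case both you and the paper assert that the blowup $Y'$ remains inseparable; this fails precisely when the blown-up node belongs to a bisep (its partner becomes a sep of $Y'$), so that assertion needs either repair or a separate treatment of that configuration, although the degree bookkeeping still closes the case.
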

\begin{proof}
If $Y$ is 2-inseparable, the above Lemma applies.
Else, $p,q$ belong to different 2-components
 $Y_1, Y_2$. It will suffice prove that  $|\omega(p+q)|$,
separates every length-2 scheme $\fkb$ except
at $p+q$.
We will focus on the $\fkb$ regular case, as the irregular
one is handled as in Lemma \ref{omega+3-on-2-sep-lem}.
If $\O(\fkb)$,
hence $L=\O(\fkb-p-q)$ has degree $\leq 1$ on each
component, Lemma \ref{deg-0-lem} applies, so we may assume
$\O(\fkb)$ has degree 2 on a unique 2-component $Z$.
We may assume $p\not\in Z$.
Let $\theta$ be a bisep on $Z$ with $p\in\lf{Y}(\theta),
Z\subset\rt{Y}(\theta)$. Then clearly any section
$s\in H^0(L)$ vanishes on $\lf{Y}(\theta)$, hence
on $\rt{\theta}\subset Z$. If $q\in Z$, then $L$ has degree
1 on $Z$ but $s$ has 2 zeros there, so $s$ vanishes on $Z$,
hence on all of $Y$. If $q\not\in Y$, there exists
another bisep $\theta'\neq \theta$ with $q\in\lf{Y}(\theta'),
Z\subset \rt{Y}(\theta')$. Then $s$ has 3 zeros on $Y$, hence vanishes s above.
\end{proof}
The following result due to Catanese \cite{catanese-gorenstein} classifies the hyperelliptic 2-inseparable curves. Given the
foregoing Lemmas, the proof is short and we will include it.
\begin{prop}[Catanese]\lbl{insep-hel}
Let $X$ be a 2-inseparable nodal curve of arithmetic genus $g>1$, such that $\omega_X$ is not very ample; then
$X$ is either\par
(i)  irreducible hyperelliptic, or\par
 (ii) an interlace.\par  In either case, the canonical system $|\omega_X|$ yields a  2:1 morphism onto a rational normal curve in $\P^{g-1}$.
\end{prop}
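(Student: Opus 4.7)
The plan is to find smooth points $p, q \in X$ (possibly $p = q$) such that $p + q$ fails to impose two conditions on $|\omega_X|$, then invoke Lemma \ref{hypell-or-interlace}. Preliminaries: $X$ is 2-inseparable, hence inseparable with no separating nodes, and its unique inseparable component is $X$ itself, which has positive genus; in particular $X$ has no spines. By the corollary to Lemma \ref{insep-is-free}, $\omega_X$ is globally generated, giving a morphism $\phi : X \to \P^{g-1}$ whose image is non-degenerate by completeness of the canonical linear system.

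Suppose $\omega_X$ is not very ample, witnessed by a length-2 subscheme $\fkb \subset X$ with $h^0(\omega_X \otimes \I_\fkb) \geq g - 1$. If $\fkb$ is Cartier (in particular, whenever its support is smooth), Serre duality and Riemann-Roch give $h^0(\O_X(\fkb)) \geq 2$, so $\fkb$ moves in a pencil; because $h^0(\omega_X(-\fkb))$ depends only on the linear equivalence class of $\fkb$, the failure of very ampleness persists for every member of this pencil, and a general member is a smoothly supported $p + q$. If $\fkb$ is not Cartier (possible only at a node, and only for subschemes of type $(y, x^2)$ or $(x, y^2)$), blow up the offending node via $b : X' \to X$: the formula $b^*\omega_X = \omega_{X'}(p_1 + p_2)$ transfers the failure to $X'$ exactly as in the ``tangent to a branch'' case of the proof of Lemma \ref{higher-twist-very-ample}, again reducing to the smoothly supported situation.

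With smooth $p, q$ so obtained, Lemma \ref{hypell-or-interlace} gives the required dichotomy: $X$ is either (i) irreducible hyperelliptic with $p + q$ a fibre of the $g^1_2$, or (ii) an interlace with $p + q = \eta\inv(\mathrm{pt})$. The final 2:1 statement is then automatic. In (i), the hyperelliptic map $h : X \to \P^1$ satisfies $\omega_X \cong h^* \O_{\P^1}(g-1)$, so $\phi = v_{g-1} \circ h$ with $v_{g-1}$ the $(g-1)$-Veronese, and the image is a rational normal curve of degree $g - 1$ in $\P^{g-1}$. In (ii), the interlace map $\eta$ satisfies $\omega_X \sim (g - 1)\eta\inv(\mathrm{pt})$ by Riemann-Hurwitz, and the identical factorization yields the same conclusion. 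The main technical hurdle is the reduction in paragraph two, namely ensuring that failure at non-Cartier or node-supported $\fkb$ indeed propagates to smoothly supported $p, q$ on $X$ itself; the linear equivalence invariance of $h^0(\omega_X(-\fkb))$ in the pencil argument, and the blow-up identity in the node case, are the two mechanisms that make this go through.
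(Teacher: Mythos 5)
Your reduction to smoothly supported schemes breaks down exactly at the node-supported cases, which is where the paper's proof does all of its real work. Three concrete problems. First, your classification of non-Cartier length-2 subschemes is incomplete: besides the two schemes tangent to a branch at a node, the scheme $\{p\}\cup\{q\}$ with $p$ a node and $q\neq p$ is also non-Cartier (the reduced point at a node is not a Cartier divisor), so this case escapes both halves of your dichotomy. Second, in the Cartier case your pencil argument silently assumes that a general member of $|\fkb|$ is smoothly supported; this is true but not free. You must rule out sections of $\O_X(\fkb)$ vanishing identically on a component and base points of the pencil at nodes, and both verifications use 2-inseparability in an essential way (a section vanishing on a subcurve $Y_1$ restricts to a section on $\overline{X-Y_1}$ with at least $3$ zeros, exceeding the degree bound $2$ --- the same mechanism as in the proof of Lemma \ref{higher-twist-very-ample}). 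Third, and most seriously, in the tangent-at-a-node case the blow-up does not reduce you to a smoothly supported failure on $X$: it converts the question into one about $\omega_{X'}$ (in fact, into whether the preimage $p_1$ is a base point of $\omega_{X'}$), and $X'$ need not be 2-inseparable --- blowing up one node of a 2-inseparable curve routinely creates separating binodes, e.g.\ for two components meeting in three points. So Lemma \ref{hypell-or-interlace} cannot be invoked for $X'$, and even if it could, the conclusion would concern $X'$ rather than $X$. This is precisely the point at which the paper's own proof brings in the polyseparator structure theory (maximal polyseparators and Lemma \ref{n-gon}) to handle a 2-separable $X'$; your proposal has no substitute for that step.

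That said, the skeleton is salvageable, and your pencil trick is a genuinely different route from the paper's for the regular (Cartier) schemes supported at a node: the paper blows up and analyzes whether $p_1+p_2$ imposes independent conditions on $\omega_{X'}$ via polyseparators, whereas moving the Cartier scheme within its pencil to a smoothly supported divisor sidesteps that machinery entirely. To complete your version you would need to (a) supply the no-base-point-at-nodes lemma above, and (b) dispose of the two remaining non-Cartier cases directly: in both, the failure condition translates, after blowing up the node, into $q$ (respectively $p_1$) being a base point of $\omega_{X'}$, which is impossible because $X'$ is inseparable, connected, and semistable of genus $g-1\geq 1$, so that $\omega_{X'}$ is free by Lemma \ref{insep-is-free}. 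With those repairs the argument closes, and in fact avoids the polyseparator analysis altogether; as written, it does not.
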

\begin{proof} Assume $X$ is neither irreducible hyperelliptic nor an interlace.
We need to show every length-2 scheme imposes
independent conditions,
i.e. is embedded by $|\omega_X|$.  The case of a scheme supported at
smooth points was considered above.
The case of a scheme $p+q$ where $p$ is a node and $q\neq p$
 is elementary.
Next, consider the case of a length-2
 scheme $\zeta$
supported at a node $p$. If $X'$ is the blowup of $p$ with node preimages
$p_1, p_2$ then $\zeta$ dependent for $\omega_X$ means $p_1+p_2$
dependent for $\omega_{X'}$ (this is true whether $\zeta$
is regular or not).
Note $X'$ is inseparable. If $X'$ is 2-insparable then Lemma \ref{hypell-or-interlace}
applies. Else, let $\vtheta$ be a separating binode of $X'$.
If $\vtheta$ is properly contained in a polyseparator $\Theta$
(see \S\ref{polysep-sec} below), then some connected component of $(X')^\Theta$ contains neither $p_1$ nor $p_2$, which makes $X$ 2-separable.
Therefore $\vtheta$ is already a maximal polyseparator, hence by Lemma \ref{n-gon}, its sides
$\lf{X}', \rt{X}'$ are both inseparable and again by 2-inseparability of $X$, each side contains precisely one of $p_1, p_2$. Because
$\omega_{\lf{X}'}, \omega_{\rt{X}'}$ are both free and
$H^0(\omega_{\lf{X}'}), H^0(\omega_{\rt{X}'})\subset H^0(\omega_{X'})$ , this implies
$p_1, p_2$ impose 2 conditions on $\omega_{X'}$.
\end{proof}
The following  characterization of the limits of smooth hyperelliptic curves among 2-inseparables is probably not new
(compare for instance \cite{caporaso-imrn}, \S 5.2) but is included
because it follows easily from the foregoing discussion. 
\begin{prop}\label{hel-2-insep-thm}
Let $X$ be a 2-inseparable nodal curve of arithmetic genus $g>1$.
Then the following are equivalent:\par (i) $X$ is a limit of smooth
hyperelliptic curves.\par (ii) The dualizing sheaf $\omega_X$ is not
very ample.\par (iii) $X$ is irreducible hyperelliptic or an interlace.
\end{prop}
\begin{proof}
The equivalence of (ii) and (iii) is just
Catanese's result (Proposition \ref{insep-hel}).
That (i) implies (ii) is trivial: if a smooth hyperelliptic curve $X$
specializes to a stable curve $X_0$, then a general point $p\in X_0$
will be contained in a length-2 scheme imposing 1 condition on $\omega_{X_0}$, so $\omega_{X_0}$ is not even birationally very ample.
It remains to prove  that (iii) implies (i). This is basically
a folklore dimension
counting argument that we will give in the case of an interlace,
as that of an irreducible curve is similar. Thus let $X_0$ be an
interlace, which we may assume corresponds to 2 identical $(g+1)$-pointed
$\P^1$'s of the form
\[(\P^1_1, 0,1,\infty, p_1,...,p_{g-2})\simeq(\P^1_2, 0,1,\infty, q_1,...,q_{g-2}).\] Let $X/B$ be a versal deformation of $X_0$.
Let $B_1\subset B$ denote the locus of $(g+1)$-nodal
curves, which may be identified locally with $(\C^{g-2})^2\times T$ for a polydisk $T$,
with the $2(g-2)$ parameters corresponding to $p_1,...,q_{g-2}$.
On $B$, $B_1$ is locally defined by $(g+1)$ equations corresponding
to the nodes, say $u_1,...,u_{g+1}$..
Let $p$ be a general point of $\P^1$, and let $p'\in\P^1_1\subset  X_0$, $p"\in\P^1_2\subset  X_0$
be the points corresponding to $p$. Then in a neighborhood of $(p',p")$
we may identify $X_{B_1}^2$ with $(\C^{g-1})^2\times T$.\par
Now let $E$ be the Hodge bundle $\pi_*(\omega_{X/B})$ pulled back
to $X^2_B$ and consider the evaluation map
\[\phi:E\to p_1^*(\omega_{X/B})\oplus p_2^*(\omega_{X/B}).\]
At $(p',p")$, $\phi$ has rank 1 hence in a neighborhood of $(p',p")$
on $X^2_B$, the degeneracy locus $D$ of $\phi$ is defined by
$g-1$ equations, say $f_1,...,f_{g-1}$, not necessarily forming a regular sequence. On the other hand,
consider the restriction of $\phi$ over $X_{B_1}^2$, identified
with $(\C^{g-1})^2\times T$. There, the degeneracy locus
of $\phi$ corresponds to hyperelliptic $(g+1)$-nodal curves plus
member of the $g^1_2$, and by Proposition \ref{insep-hel} this
can be identified with $\C^{g-1}\times T$ sitting diagonally in
$(\C^{g-1})^2\times T$, hence is still defined by a regular sequence
of length $g-1$. It follows that $f_1,...,f_{g-1}$ forms a regular sequence $\mod (u_1,...,u_{g+1})$, i.e. $f_1,...,f_{g-1}, u_1,...,u_{g+1}$
together form a regular sequence, hence  no $u_i$ is a zero-divisor
modulo $f_1,...,f_{g-1}$ i.e. $D$ has no component contained in
any boundary divisor corresponding to a node of $X_0$.
Therefore $(p',p")$ is a limit of hyperelliptic divisors
 on a smooth hyperelliptic curve.

\end{proof}
\begin{rem} This remark is folklore, only peripheral to our results.
Via the theory of admissible covers \cite{har-mum} and its generalizations
as in \cite{eisenbud-harris} and \cite{faber-pandh}, the limits of hyperelliptics can also be characterized among the stable 2-inseparables as those admitting a 2:1 map to $\P^1$. 
The 2-inseparability is what makes the target  a $\P^1$ rather than a general rational tree.
This result is at least implicit  in \cite{har-mum}, Sec. 4 and the more powerful \cite{faber-pandh},
but does not appear to have been written down explicitly. In any event,
a drawback of  this characterization is that it does not readily yield defining equations for the hyperelliptic locus. By contrast, the above result readily yields such equations,
more precisely, equations for the locus of pairs (hyperelliptic curve, fibre of hyperelliptic map) as a degeneracy locus
(always among 2-inseparables).\qed
\end{rem}

Now as soon as one leaves the realm of 2-inseparable curves,
birational non-very ampleness of the canonical system is no longer
a good notion of hyperellipticity: as the example below
shows, the canonical
map can have different degrees (1 or 2) over different components of its image, and such curves cannot be limits of smooth hyperelliptics. This
 is precisely the motivation for developing the notion
 of the 'sepcanonical system' the will occupy us in subsequent
 sections. One of the main results of the paper \cite{grd} will show that the limits
of hyperelliptics are precisely the curves whose sepcanonical
system is not  birationally very ample.

\begin{example}
Let $\vtheta$ be a separating binode on a nodal curve $X$,
so that the pair \nl \mbox{$(\lf{X}(\vtheta), \lf{\theta_1}+\lf{\theta_2})$}
is a smooth hyperelliptic pair while $\rt{X}$ is non-
hyperelliptic. Then $\omega_X$ is not
essentially very ample but $X$
cannot be a fibre of a family with smooth total space
 and general fibre a smooth hyperelliptic curve $X'$.
In
an elementary fashion this can be seen from the fact that the locus of hyperelliptic divisors (pairs not separated by the
canonical system)
 is a divisor on $(X')\spr 2.$, which would extend to a divisor
 on the relative Hilbert scheme of the family (which
 can be assumed to have only quotient singularities);
however, this locus meets the component
$(\rt{X})\spr 2.$ of the Hilbert scheme
$X\sbr 2.$ in the isolated point
$\theta_1+\theta_2$.\par
A similar example can be made with $\vtheta$ replaced by a separating node, hyperelliptic only on one side.

\end{example}

\newsection{Polyseparators}\lbl{polysep-sec}
This section is essentially trivial in nature. Its purpose of this section is to collect for future reference some elementary,
 and probably well-known (compare \cite{caporaso-viviani1}, \cite{caporaso-viviani2}), combinatorial remarks about nondisjoint collections of separating binodes on a curve. We begin with
a few definitions.
\begin{defn}\lbl{polysep-defn} Fix an inseparable nodal curve $X$.
\begin{enumerate}\item
A polyseparator of degree $n$ on $X$  is a collection
 of nodes $(\theta_1,...,\theta_n), n\geq 2$
 with the property that
any distinct pair of nodes $\theta, \theta'\in\Theta$ is a
separating binode.
\item $\Theta$ is a \emph{proper} polyseparator if
$n\geq 3$.
\item A binode or polyseparator is \emph{maximal} if it is not
properly contained in a polyseparator.
\item We associate to a polyseparator $\Theta$ a
graph $G(\Theta)$ having as vertices the connected components of the
separation or blowup $X^\Theta=b_\Theta X$ and as edge-set $\Theta$.\end{enumerate}
\end{defn}\begin{defn}\label{semicompact-defn}\begin{enumerate}
\item
On a general nodal curve $X$, we define a polyseparator as a
polyseparator on one of its inseparable components.
\item $X$ is said to be of \emph{semicompact type}
(or 'polyunseparated') if it has no proper polyseparators.
\end{enumerate}
\end{defn}
\begin{lem}\lbl{n-gon}
Let $\Theta$ be a polyseparator of degree $n$ on an inseparable curve $X$. Then\quad (i) $G(\Theta)$ is a simple $n$-gon and, with suitable notation,
\[X=\bigcup\limits_{i=1}^n\rt{X}(\theta_i, \theta_{i+1}),
\theta_{n+1}:=\theta_1;\]
(ii) given a node $\theta\not\in\Theta$, $\Theta\cup\{\theta\}$ is a polyseparator iff $\theta$ is a separating node on the unique
$\rt{X}(\theta_i, \theta_{i+1})$ containing it..

\end{lem}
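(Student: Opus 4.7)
The plan is to recast the polyseparator hypotheses as a combinatorial rigidity statement about the graph $G = G(\Theta)$ and then reap the geometric consequences for part (ii). The graph $G$ has $n$ edges, one per $\theta_i$, and the hypotheses translate cleanly: each $\theta_i$ being nonseparating in $X$ says every edge of $G$ is a non-bridge, while $\{\theta_i,\theta_j\}$ being a bisep for every distinct pair says that deleting any two edges of $G$ leaves \emph{exactly} two connected components. First I would rule out loops in $G$: a loop at $\theta_i$ contributes nothing to connectivity, so deleting $\{\theta_i,\theta_j\}$ from $G$ would have the same effect on components as deleting $\theta_j$ alone, which cannot disconnect since $\theta_j$ is non-bridge — contradicting the exact-two-components requirement. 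Hence $G$ is simple.

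The central step is to show that $G$ is a single $n$-cycle. I would use ear decomposition: every $2$-edge-connected graph is built by attaching ears (paths with endpoints on the previously constructed subgraph and new interior vertices) to an initial cycle $C_0$. Suppose for contradiction that $G \supsetneq C_0$, and let $P$ be the first attached ear with endpoints $u,v$ on $C_0$ and first edge $e_1$ incident to $u$. Pick any edge $f$ of $C_0$ distinct from $e_1$. In $G\setminus\{e_1,f\}$, the complement $C_0\setminus f$ is a path still joining $u$ and $v$, the interior vertices of $P$ remain attached through the rest of $P$ to $v$, and any later-attached ears remain attached through their own endpoints; hence $G\setminus\{e_1,f\}$ is connected, violating the exact-two-components requirement on $(e_1,f)$. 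So $G = C_0$ is a cycle, necessarily of length $n$. Labelling its vertices $Y_1,\dots,Y_n$ cyclically so that $\theta_i$ joins $Y_{i-1}$ to $Y_i$ identifies $Y_i$ with $\rt{X}(\theta_i,\theta_{i+1})$ and yields the decomposition asserted in (i).

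For part (ii), fix a node $\theta\notin\Theta$, let $Y_j$ be the unique side containing it, and write $\lambda_j,\lambda_{j+1}$ for the preimages of $\theta_j,\theta_{j+1}$ on $Y_j$. That $\theta$ is nonseparating in $X$ is automatic, since the complement still contains the full $n$-gon structure connecting $Y_j\setminus\theta$ to itself around the cycle. So $\Theta\cup\{\theta\}$ is a polyseparator iff $\{\theta,\theta_i\}$ is a bisep for every $i$. Deleting $\theta_i$ breaks the $n$-gon into an open arc that still contains $Y_j$. If $\theta$ is nonseparating on $Y_j$, then $Y_j\setminus\theta$ is connected, so the arc stays connected and $X\setminus\{\theta_i,\theta\}$ has only one component — bisep fails. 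If instead $\theta$ separates $Y_j$ as $L\cup R$, a short case check on whether $\lambda_j,\lambda_{j+1}$ lie on the same side or opposite sides of the splitting shows $X\setminus\{\theta_i,\theta\}$ falls into exactly two components in every configuration (either one of $L,R$ is isolated while the rest of the broken cycle stays connected, or $L,R$ wind up in separate arcs of the broken cycle). This yields the stated iff.

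The principal obstacle I expect is the ear-decomposition step in part (i). The polyseparator condition is extremely rigid, but to make the rigidity precise one must confirm that no candidate augmentation of a cycle — whether a chord or a longer ear — can evade producing some pair of edges whose removal keeps $G$ connected, contradicting the exact-two-components hypothesis.
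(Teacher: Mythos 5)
Your proof is correct, and for part (i) it takes a genuinely different route from the paper's. The paper runs a simultaneous induction on $n$: (i) is checked directly for $n=2$, then (i) at level $n$ is shown to imply (ii) at level $n$, and (i) together with (ii) at level $n$ yields (i) at level $n+1$ (the new node $\theta_{n+1}$, being by (ii) a separating node of some side $\rt{X}(\theta_i,\theta_{i+1})$, subdivides that side and refines the $n$-gon into an $(n+1)$-gon). You instead prove (i) in one stroke by translating the hypotheses into the statement that $G(\Theta)$ is a bridgeless connected multigraph in which every pair of edges is a $2$-edge-cut, and then showing via ear decomposition that such a graph is a single cycle; the translation is valid, since for any $S\subset\Theta$ the components of $X$ minus the nodes in $S$ correspond exactly to the components of $G(\Theta)$ minus the edges in $S$. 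This is clean and self-contained modulo the standard ear-decomposition theorem, and it buys you (i) for all $n$ at once rather than through the interleaved induction; the paper's route avoids any graph-theoretic black box. Your part (ii) is then essentially the paper's argument on the resulting cyclic chain. One point you must tighten there: you justify ``$\theta$ is automatically nonseparating on $X$'' by the persistence of the $n$-gon, and later you entertain the configuration in which $\theta$ separates $Y_j$ with both attachment points $\lambda_j,\lambda_{j+1}$ on the same side. That configuration has to be excluded by invoking the standing hypothesis that $X$ is inseparable, exactly as the paper does: if both attachment points lay on one side, $\theta$ would itself be a separating node of $X$, so $\{\theta,\theta_i\}$ would fail to be a bisep even though it disconnects $X$ (a bisep requires each member to be individually nonseparating), and your ``two components in every configuration'' conclusion would not give the stated equivalence. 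Once inseparability is quoted, the same-side case is vacuous and only the opposite-sides computation is needed, after which your argument goes through.
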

\begin{proof}
Induction on $n\geq 2$. With (i) being obvious for $n=2$, we first prove (i) implies (ii) for given $n$. Notations as above, suppose $\theta$ is a node on $\rt{X}(\theta_1, \theta_2)$ separating it in 2 connected components $L,R$. Because $X$ is inseparable, precisely one of $\theta_1, \theta_2$ is on each of $L,R$ and we may assume $\theta_1\in L, \theta_2\in R$. But then clearly the separation
\[X^{\theta, \theta_i}=[R\cup\bigcup\limits_{j=1}^{i-1}\rt{X}(\theta_j, \theta_{j+1})]\coprod [\bigcup\limits_{j=i}^{n}\rt{X}(\theta_j, \theta_{j+1})\cup L]\] so that $(\theta, \theta_i)$ is a separating binode. Conversely, suppose $\theta$ is a nonseparating node on $\rt{X}(\theta_1, \theta_2)$.
Then $X^{\theta, \theta_1, \theta_2}$ has just 2 connected components and becomes connected when the two preimages of $\theta_2$ are identified, i.e. $X^{\theta,\theta_1}$ is connected, so $(\theta,\theta_1)$ is nonseparating and
$\Theta\cup\{\theta\}$  is not a polyseparator.\par
To complete the proof it suffices to show that (i) and (ii) for given $n$ implies (i) for $n+1$. So suppose $\Theta=(\theta_1, ...
,\theta_{n+1})$ is a polyseparator of degree $n+1$. Then $(\theta_1,...,\theta_n)$ is a polyseparator and we may assume the above notations apply.
 Using (ii) and shifting cyclically,  we may assume $\theta_{n+1}$ is a separating node on $\rt{X}(\theta_n,\theta_1)$ and inseparability of $X$ again implies that $\theta_{n+1}$ has exactly one of $\theta_n, \theta_1$ or either of its sides in
$\rt{X}(\theta_n,\theta_1)$. This shows $G(\Theta)$ is an $(n+1)$-gon.\end{proof}
A notation as above is called a \emph{cyclic arrangement} for the polyseparator $\Theta$.

\begin{lem}
Any polyseparator on an inseparable curve is contained in a unique maximal polyseparator.
\end{lem}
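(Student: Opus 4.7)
The plan is to handle existence first (trivial, since $X$ has only finitely many nodes, so any chain of polyseparators containing the given $\Theta$ terminates in a maximal one), and then focus on uniqueness. For uniqueness I would introduce the candidate
\[
\tilde\Theta = \Theta \cup \{\theta : \theta \text{ is a node of } X \text{ with } \Theta \cup \{\theta\} \text{ a polyseparator}\},
\]
and argue that if $\tilde\Theta$ is itself a polyseparator, then uniqueness follows: any maximal polyseparator $\Theta'\supseteq\Theta$ must be contained in $\tilde\Theta$, because for each $\theta\in\Theta'$ the subset $\Theta\cup\{\theta\}\subseteq\Theta'$ has at least two elements and each of its pairs is a separating binode (being a pair in $\Theta'$), hence is itself a polyseparator. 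Combined with the polyseparator property of $\tilde\Theta$ and maximality of $\Theta'$, this forces $\Theta'=\tilde\Theta$.

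The heart of the argument is thus to show $\tilde\Theta$ is a polyseparator, i.e.\ that every distinct pair of elements forms a separating binode. The only pairs whose status is not already known are pairs $\theta,\theta'\in\tilde\Theta\setminus\Theta$, so I fix such a pair. By Lemma \ref{n-gon}, the polyseparator $\Theta=(\theta_1,\ldots,\theta_n)$ admits a cyclic arrangement with pieces $P_k=\rt{X}(\theta_k,\theta_{k+1})$ cyclically glued into an $n$-gon, and by part (ii) of that lemma $\theta$ is a separating node of some $P_i$ and $\theta'$ is a separating node of some $P_j$.

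I would then split into cases. If $i\ne j$, blowing up $\theta$ splits $P_i$ into two subcurves and blowing up $\theta'$ splits $P_j$ into two subcurves, so $X^{\Theta\cup\{\theta,\theta'\}}$ has exactly $n+2$ connected components. If $i=j$, after blowing up $\theta$ the piece $P_i$ decomposes as $A\sqcup B$ with $\theta'$ lying in, say, $A$; the condition that $\theta'$ is separating in $P_i=A\cup_\theta B$ translates into $\theta'$ being separating in $A$ (since $B$ is connected and glued to $A$ only at $\theta$, any disconnection of $P_i\setminus\{\theta'\}$ must come from $A\setminus\{\theta'\}$), so blowing up $\theta'$ splits $A$ further and $X^{\Theta\cup\{\theta,\theta'\}}$ again has $n+2$ components. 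In either case, the dual graph $\Gamma$ of $X^{\theta,\theta'}$, whose vertices are the $n+2$ components of $X^{\Theta\cup\{\theta,\theta'\}}$ and whose edges are the $n$ nodes in $\Theta$ (each joining the two components it lies between), has $n+2$ vertices and $n$ edges, hence at least $(n+2)-n=2$ connected components, so $X^{\theta,\theta'}$ is disconnected and $(\theta,\theta')$ is a separating binode. The main obstacle is the case $i=j$, where one must argue that $\theta'$ survives as a separating node of the correct half $A$ after the first blowup; once this elementary observation is in hand, the remainder reduces to the clean inequality $\#\text{components}\geq\#\text{vertices}-\#\text{edges}$ for the graph $\Gamma$.
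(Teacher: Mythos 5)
Your proof is correct, and while it shares the paper's overall reduction, the key step is carried out by a genuinely different mechanism. Both arguments identify the candidate maximal polyseparator as $\Theta$ together with all nodes $\theta$ such that $\Theta\cup\{\theta\}$ is a polyseparator --- by Lemma \ref{n-gon}(ii) this is exactly the paper's set $M(\Theta)$, i.e.\ $\Theta$ plus the separating nodes of the pieces $\rt{X}(\theta_i,\theta_{i+1})$ --- and both observe that any polyseparator containing $\Theta$ sits inside this set, so that everything reduces to showing the set is itself a polyseparator. The paper proves this by induction on $|M(\Theta)\setminus\Theta|$: it adjoins one node at a time, forming $\Theta'=\Theta\cup\{\theta\}$, and shows $M(\Theta')=M(\Theta)$ by tracking on which side of each separating node the various $\theta_k$ lie. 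You instead check the pairwise bisep condition for two new nodes $\theta,\theta'$ directly: in both of your cases the blowup at $\Theta\cup\{\theta,\theta'\}$ has $n+2$ components, and regluing along the $n$ nodes of $\Theta$ gives a graph with $n+2$ vertices and $n$ edges, hence at least $2$ connected components. The one point that needs care --- that when $\theta,\theta'$ lie on the same piece $P_i$, the node $\theta'$ remains separating on the half $A$ of $P_i^{\theta}$ containing it --- you justify correctly ($B$ is connected and attached to $A$ only at $\theta$, so connectivity of $A^{\theta'}$ would force connectivity of $P_i^{\theta'}$). What your route buys is a shorter, induction-free argument resting on the elementary inequality (number of components) $\geq$ (vertices) $-$ (edges); what the paper's route buys is explicit control, at each stage, of a cyclic arrangement of the growing polyseparator, though that information is in any case recoverable from Lemma \ref{n-gon}(i) and is not needed for this statement.
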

\begin{proof}
Let $\Theta$ be a polyseparator and use a cyclic arrangement as in the previous lemma. Let $M(\Theta)$ be the union of $\Theta$ and the separating nodes on $\rt{X}(\theta_i, \theta_{i+1})$.
By the Lemma, any polyseparator containing $\Theta$ is contained in $M(\Theta)$. Therefore it suffices to prove $M(\Theta)$ is a polyseparator. The proof is by induction on
$m(\Theta):=|M(\Theta)\setminus \Theta|$. If $m(\Theta)>0$, pick any $\theta\in M(\Theta)\setminus \Theta$ and let
$\Theta'=\Theta\cup \{\theta\}$. To finish the induction it suffices to prove $M(\Theta')=M(\Theta)$. With no loss of generality, we may assume $\Theta'=(\theta_1,...,\theta_{n+1})$ is a cyclic arrangement as above.
If $\alpha\in M(\Theta')$,
we may assume $\alpha$ is a separating node on $\rt{X}(\theta_n,\theta_{n+1})$. By inseparability of $X$ again,
$\alpha$ has exactly one of $\theta_n,\theta_{n+1}$ on either side in $\rt{X}(\theta_n,\theta_{n+1})$, which makes it
a separating node on $\rt{X}(\theta_n,\theta_{1})$, so
$\alpha\in M(\Theta)$. Conversely, if $\alpha\in M(\Theta)$,
we may assume $\alpha$ is a separating node on $Y=\rt{X}(\theta_n,\theta_{1})$ with $\theta_n\in\lf{Y}(\alpha), \theta_1\in\rt{Y}(\alpha)$.
But then if $\theta_{n+1}\in\lf{Y}(\alpha)$, then $\alpha$ is a separating node on $\rt{X}(\theta_{n+1}, \theta_1)$, while if
$\theta_n\in\rt{Y}(\alpha)$, then $\alpha$ is a separating node on $\rt{X}(\theta_n,\theta_{n+1})$. Therefore in either case
$\alpha\in M(\Theta')$.
\end{proof}
\begin{lem}
Any two maximal polyseparators on an inseparable nodal curve are disjoint.
\end{lem}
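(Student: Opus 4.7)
The plan is to suppose $\Theta$ and $\Theta'$ are two maximal polyseparators with a common node $\theta$, and show $\Theta' \subseteq \Theta$; symmetry then gives equality. First I would fix a cyclic arrangement $\Theta = (\theta_1,\ldots,\theta_n)$ as provided by Lemma \ref{n-gon}, with $\theta = \theta_1$, so that $X = \bigcup_{i=1}^n \rt{X}(\theta_i,\theta_{i+1})$ glued along the identifications at each $\theta_{i+1}$. Let $\theta' \in \Theta' \setminus \{\theta_1\}$ be arbitrary, and let $i$ be the unique index with $\theta' \in \rt{X}(\theta_i,\theta_{i+1})$. The goal is to show $\theta'$ is a separating node of $\rt{X}(\theta_i,\theta_{i+1})$; then Lemma \ref{n-gon}(ii) says $\Theta \cup \{\theta'\}$ is a polyseparator, so $\theta' \in M(\Theta)$, and maximality forces $\theta' \in \Theta$.

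The key point is the connectivity behaviour of $X$ under blowing up $\theta_1$ and $\theta'$. Since $\Theta' \ni \theta_1,\theta'$ is a polyseparator, the pair $(\theta_1,\theta')$ is a bisep of $X$, so $X^{\theta_1,\theta'}$ is disconnected. On the other hand, blowing up only $\theta_1$ breaks exactly one edge of the $n$-gon $G(\Theta)$, so $X^{\theta_1}$ is the "path" $\rt{X}(\theta_1,\theta_2) \cup \cdots \cup \rt{X}(\theta_n,\theta_1)$ glued along the remaining $\theta_2,\ldots,\theta_n$, and in particular it is connected (equivalently, $\theta_1$ is nonseparating in $X$). Now blowing up $\theta'$ inside this connected path only affects the single side $\rt{X}(\theta_i,\theta_{i+1})$ in which it sits; the resulting curve $X^{\theta_1,\theta'}$ is disconnected if and only if $\rt{X}(\theta_i,\theta_{i+1})$ is disconnected after blowing up $\theta'$, i.e.\ iff $\theta'$ is a separating node on $\rt{X}(\theta_i,\theta_{i+1})$. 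Since we know the blow-up is disconnected, $\theta'$ must indeed be a separating node on that side.

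Applying Lemma \ref{n-gon}(ii), $\Theta \cup \{\theta'\}$ is then a polyseparator, so $\theta'$ lies in the unique maximal polyseparator $M(\Theta)$ containing $\Theta$. By maximality of $\Theta$, $M(\Theta) = \Theta$, whence $\theta' \in \Theta$. As $\theta'$ was arbitrary in $\Theta' \setminus \{\theta_1\}$, we conclude $\Theta' \subseteq \Theta$; exchanging the roles of $\Theta$ and $\Theta'$ yields $\Theta = \Theta'$, contradicting the assumption that they were distinct.

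The main obstacle is purely bookkeeping: one has to be careful that "the unique $\rt{X}(\theta_i,\theta_{i+1})$ containing $\theta'$" really is a well-defined side, i.e.\ that $\theta' \notin \Theta$ (if $\theta' \in \Theta$ we are done immediately) and $\theta'$ does not coincide with any of the $\theta_j$ (handled by $\theta' \neq \theta_1$ and the $n$-gon structure which places $\theta_2,\ldots,\theta_n$ at the identification points, not in the interiors of the sides). Once this is verified, the proof runs purely on the connectivity argument above, which reduces everything to Lemma \ref{n-gon}(ii) plus the non-separating property of $\theta_1$ in $X$.
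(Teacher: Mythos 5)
Your proof is correct and follows essentially the same route as the paper's: both arguments reduce to the dichotomy of whether a node $\theta'\notin\Theta$ separates the unique side $\rt{X}(\theta_i,\theta_{i+1})$ containing it, handling the separating case via Lemma \ref{n-gon}(ii) plus maximality, and the non-separating case via the observation that one can travel around the $n$-gon avoiding $\theta'$, so that $X^{\theta_j,\theta'}$ stays connected and no bisep with an element of $\Theta$ can form. The only cosmetic difference is that you run the connectivity step in the contrapositive direction (disconnection forces $\theta'$ to separate its side), whereas the paper states it directly; the substance is identical.
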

\begin{proof}
Let $\Theta=(\theta_1,...,\theta_n)$ be a maximal polyseparator, cyclically arranged, and $\theta$ another node,
say $\theta\in\rt{X}(\theta_i, \theta_{i+1})$. If $\theta$ separates $\rt{X}(\theta_i, \theta_{i+1})$, then $\theta\in \Theta$ by maximality.
If not, then clearly $X^{\theta_j,\theta}$ is connected for all $j$ (we can go around the circle starting from $\theta_j$ and avoiding $\theta$). Therefore, $\theta_j$ cannot be in any polyseparator not contained in $\Theta$, so cannot be in any other maximal polyseparator. This proves the required disjointness.

\end{proof}
We summarize the above results as follows.
\begin{prop}
Let $X$ be an inseparable nodal curve. Then there is a collection
of disjoint sets of nodes on $X$ called maximal polyseparators, with the
property that a given binode $\vtheta$ is properly
separating iff $\vtheta$ is contained in some  maximal polyseparator.
\end{prop}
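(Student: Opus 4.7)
The plan is to simply collect the three preceding lemmas into the stated structural result. No new geometric input is required: the content of the proposition is already distributed among the existence-of-maximal-envelope lemma, the disjointness lemma, and Definition~\ref{polysep-defn}.

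First I would identify the collection in question: for each bisep $\vtheta=(\theta_1,\theta_2)$ of $X$, invoke the lemma guaranteeing that the polyseparator $\vtheta$ (which is a degree-$2$ polyseparator, by its very definition as a properly separating binode) is contained in a \emph{unique} maximal polyseparator $\Theta(\vtheta)$. Let $\mathcal{M}(X)$ be the set of all such $\Theta(\vtheta)$ as $\vtheta$ ranges over the biseps of $X$. Disjointness of the members of $\mathcal{M}(X)$ is exactly the content of the second lemma.

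For the characterization of biseps, both directions are immediate. If $\vtheta$ is a bisep, then by construction $\vtheta \subset \Theta(\vtheta)\in\mathcal{M}(X)$. Conversely, if $\vtheta=(\theta_1,\theta_2)$ is a binode contained in some maximal polyseparator $\Theta$, then by part~(1) of Definition~\ref{polysep-defn}, any two distinct nodes in $\Theta$ form a separating binode with both constituents nonseparating, so $\vtheta$ is a bisep.

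The only point that might look like an obstacle — and it is really not one — is the uniqueness implicit in ``contained in some maximal polyseparator'': uniqueness follows from disjointness of the maximal polyseparators, so the maximal polyseparator containing a given bisep is well-defined. Hence the proof is a one-paragraph citation of the three lemmas above with no residual calculation.
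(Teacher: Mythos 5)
Your proposal is correct and matches the paper exactly: the paper offers no separate argument, introducing the proposition with ``We summarize the above results as follows,'' and the intended content is precisely the assembly you give of the unique-maximal-envelope lemma, the disjointness lemma, and Definition~\ref{polysep-defn} (with the observation that on an inseparable curve every node is automatically nonseparating). Nothing further is needed.
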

A (oriented) bisep $\vtheta$ is said to be \emph{adjacent} (or \emph{right-adjacent} ) on $X$ if $\rt{X}(\vtheta)$ is inseparable. This terminology is justified by the following result,
which follows easily from the above discussion:
\begin{cor}
$(\theta_1, \theta_2)$ is adjacent on $X$ iff the unique maximal polyseparator containing it has the form, in cyclic arrangement, $\Theta=(\theta_1, \theta_2,...,\theta_m), m\geq 2$.
\end{cor}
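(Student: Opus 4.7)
The plan is to deduce the corollary directly from Lemma~\ref{n-gon} combined with the maximality of the enclosing polyseparator. Let $\Theta$ be the unique maximal polyseparator containing $\vtheta=(\theta_1,\theta_2)$, and arrange it cyclically as $\Theta=(\eta_1,\dots,\eta_m)$, so that Lemma~\ref{n-gon}(i) yields the decomposition $X=\bigcup_{i=1}^{m}\rt{X}(\eta_i,\eta_{i+1})$ with $\eta_{m+1}:=\eta_1$. Since a cyclic arrangement comes with two possible orientations, the question is precisely whether we can choose the orientation and starting index so that $(\theta_1,\theta_2)=(\eta_1,\eta_2)$, i.e.\ whether $\theta_1,\theta_2$ are consecutive in the cyclic order with the matching choice of ``right'' side.

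For the ``if'' direction, suppose after cyclic relabeling that $(\theta_1,\theta_2)=(\eta_1,\eta_2)$. Then $\rt{X}(\theta_1,\theta_2)$ is literally one of the pieces $\rt{X}(\eta_i,\eta_{i+1})$ appearing in the decomposition. By the maximality of $\Theta$ and Lemma~\ref{n-gon}(ii), no node $\theta\notin\Theta$ can be a separating node on this piece, for otherwise $\Theta\cup\{\theta\}$ would be a strictly larger polyseparator. Hence $\rt{X}(\theta_1,\theta_2)$ is inseparable and $\vtheta$ is adjacent.

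For the converse, I would argue the contrapositive: suppose that in no cyclic labeling are $\theta_1,\theta_2$ consecutive with matching orientation. Then in the cyclic arrangement some indices $\eta_j$ lie strictly between $\theta_1$ and $\theta_2$ on the arc corresponding to the right side, so each such $\eta_j$ appears as a node of $\rt{X}(\theta_1,\theta_2)$. Iterating Lemma~\ref{n-gon}(i) along that arc, $\rt{X}(\theta_1,\theta_2)$ decomposes as the union of the consecutive pieces $\rt{X}(\eta_i,\eta_{i+1})$ indexed along the arc, glued at the intermediate $\eta_j$'s; since $G(\Theta)$ is a simple $n$-gon, removing any such $\eta_j$ disconnects the arc, hence the subcurve. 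Thus $\eta_j$ separates $\rt{X}(\theta_1,\theta_2)$, contradicting adjacency. The only mildly delicate point is this chain-decomposition of the right arc, which amounts to the observation that in a polygon with two vertices singled out, the interior vertices of either arc are cut vertices of that arc; no further machinery beyond Lemma~\ref{n-gon} is needed.
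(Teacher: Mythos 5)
Your proof is correct, and it is essentially the argument the paper intends: the paper states only that the corollary "follows easily from the above discussion," and your deduction — the "if" direction from maximality plus Lemma \ref{n-gon}(ii) (a separating node of the single piece $\rt{X}(\theta_1,\theta_2)$ would enlarge the maximal polyseparator), and the "only if" direction from the $n$-gon chain decomposition (an intermediate vertex of the right arc is a cut node of $\rt{X}(\theta_1,\theta_2)$) — is precisely that deduction. The only point worth noting is that the same conclusion drops out even more directly from the proof of the unique-maximal-polyseparator lemma, where the maximal polyseparator containing $\vtheta$ is exhibited as $\vtheta$ together with the separating nodes on its two sides, so adjacency is literally the statement that the right side contributes nothing; but this is a cosmetic difference, not a different route.
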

\begin{lem}
In a curve of semi-compact type, any two separating binodes lie entirely to one side of each other.
\end{lem}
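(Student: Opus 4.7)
The plan is to pass to the dual graph $G$ of the ambient inseparable component of $X$ containing the two biseps $\vtheta_1 = \{\theta_1,\theta_2\}$ and $\vtheta_2 = \{\theta_1',\theta_2'\}$; under this translation a bisep corresponds to a pair of edges $\{e,f\}$ such that $G-\{e,f\}$ has exactly two components while $G-e$ and $G-f$ are each connected. I will argue by contradiction: suppose $\vtheta_2$ is not contained in a single side of $\vtheta_1$, so the corresponding edges $e_3, e_4$ have all their endpoints in opposite components $V_1, V_2$ of $G - \{e_1, e_2\}$. Note first that the polyunseparated hypothesis forces $\vtheta_1 \cap \vtheta_2 = \emptyset$: if $e_1 = e_3$ then the series-class structure underlying Lemma \ref{n-gon} makes $\{e_2, e_4\}$ also a bisep, whence $\{e_1, e_2, e_4\}$ is a polyseparator of degree $3$.

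The combinatorial heart of the argument is to refine the two bipartitions into four cells. Writing $V(G) = U_1 \sqcup U_2$ for the partition induced by $\vtheta_2$, set $A = V_1 \cap U_1$, $B = V_2 \cap U_1$, $C = V_1 \cap U_2$, $D = V_2 \cap U_2$. Since $\{e_1, e_2\}$ exhausts all edges crossing the $V$-bipartition and $\{e_3, e_4\}$ all edges crossing the $U$-bipartition, no edge can be ``doubly crossing,'' i.e.\ can join $A$ to $D$ or $B$ to $C$ (such an edge would have to lie in both $\vtheta_1$ and $\vtheta_2$). The endpoints of $e_3, e_4$ then force $e_3$ to be an $A$--$C$ edge, $e_4$ a $B$--$D$ edge, and all four cells to be non-empty; an easy argument tracking the unique crossing edges shows, from the connectedness of $V_1, V_2, U_1, U_2$, that each of $A, B, C, D$ is itself connected as an induced subgraph of $G$. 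Each of $e_1, e_2$ must therefore run either $A$--$B$ or $C$--$D$; if both were of the same type, removing $\vtheta_2$ would leave at least three connected components, contradicting that $\vtheta_2$ is a bisep. So exactly one of $e_1, e_2$ is of each type.

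This identifies the quotient multigraph on $\{A, B, C, D\}$ as the $4$-cycle $A \stackrel{e_1}{-} B \stackrel{e_4}{-} D \stackrel{e_2}{-} C \stackrel{e_3}{-} A$. In a $4$-cycle every one of the six pairs of edges is a $2$-edge cut producing exactly two components; pulling back via the cell connectivity established above, each of the six pairs drawn from $\{e_1, e_2, e_3, e_4\}$ is a bisep of $X$. Equivalently $\{\theta_1, \theta_2, \theta_1', \theta_2'\}$ is a polyseparator of degree $4$, contradicting the polyunseparated hypothesis.

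The main obstacle is the bookkeeping in the middle paragraph, where I must (i) rule out doubly crossing edges, (ii) verify non-emptiness and induced connectedness of each of $A, B, C, D$, and (iii) exclude the ``same type'' configuration for $\{e_1, e_2\}$. Each of these reduces to counting components after a single edge-removal, but the pieces must be assembled in the right order. Once the $4$-cycle quotient is in hand the production of the degree-$4$ polyseparator is automatic, since in a cycle every pair of edges separates.
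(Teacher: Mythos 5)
Your proof is correct, but it takes a genuinely different route from the paper's. The paper's argument is a three-line connectivity check: assuming $\theta'_1\in\lf{X}(\vtheta)$ and $\theta'_2\in\rt{X}(\vtheta)$, it observes that $b_{\theta'_1}\lf{X}(\vtheta)$ and $b_{\theta'_2}\rt{X}(\vtheta)$ are connected (this is where the polyunseparated hypothesis enters: a $\theta'_i$ separating its side of $\vtheta$ would, by Lemma \ref{n-gon}(ii), enlarge $\vtheta$ to a proper polyseparator), and that their images in $b_{\vtheta'}X$ meet along $\theta_1,\theta_2$, so $b_{\vtheta'}X$ is connected, contradicting that $\vtheta'$ is a bisep. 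You instead refine the two bipartitions into the four cells $A,B,C,D$, identify the quotient graph as a $4$-cycle, and exhibit $\{\theta_1,\theta_2,\theta'_1,\theta'_2\}$ as an explicit proper polyseparator. The paper's version is shorter and only needs one node of $\vtheta'$ to misbehave; yours costs more bookkeeping but proves the sharper fact that two crossing biseps always generate a proper polyseparator containing all four of their nodes, which makes the link with the $n$-gon structure of \S\ref{polysep-sec} transparent. Two small points to tighten: (i) biseps lying on different inseparable components of $X$ trivially lie to one side of each other, so the passage to ``the'' ambient inseparable component needs that one-line reduction; (ii) your disposal of the case $\vtheta_1\cap\vtheta_2\neq\emptyset$ rests on the transitivity of the bisep relation, which in this paper is precisely the content of the two lemmas on maximal polyseparators preceding the statement, so those should be cited rather than an unproved ``series-class structure.''
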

\begin{proof}
Let $\vtheta\neq \vtheta'$ be separating binodes of $X$, and assume for contradiction that $\theta'_1\in\lf{X}(\vtheta),
\theta'_2\in\rt{X}(\vtheta)$. Then $b_{\theta'_1}\lf{X}(\vtheta),  b_{\theta'_2}\rt{X}(\vtheta)$
are connected and their images  in $b_{\vtheta'}X$ clearly meet, so $b_{\vtheta'}X$ is connected, contradiction.
\end{proof}
The following two results collect some elementary properties
of graphs of curves and of curves built up from 2-inseparable
hyperelliptics, respectively. All the required proofs are easy
enough to warrant omission.
\begin{propdef}\lbl{2sep-tree-gen-case-defn}
Let $X$ be a nodal curve.
\begin{enumerate}\item
The \emph{ 2-separation tree} $G_2(X)$ is
the dual graph having as edges the separating nodes and all maximal (i.e. not contained in a proper polyseparator)
properly separating binodes, and as vertices
the connected components of the blowup of all the
latter nodes and binodes.
\item This graph is a tree.
 \end{enumerate}
\end{propdef}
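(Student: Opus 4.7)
The plan is to show that $G_2(X)$ is a tree, i.e., a connected finite graph without cycles; finiteness is automatic since $X$ has only finitely many nodes. Connectedness follows directly from the connectedness of $X$: let $X^*$ denote the blowup of $X$ at all seps and maximal biseps, so the connected components of $X^*$ are precisely the vertices of $G_2(X)$. Reidentifying the branch preimages at each sep or maximal bisep recovers $X$, and since $X$ is connected, the graph of these identifications --- which is precisely $G_2(X)$ --- is connected as well.

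For acyclicity the central tool will be a non-interleaving principle: for any two distinct edges $e_1, e_2$ of $G_2(X)$, the nodes comprising $e_2$ lie entirely in $\lf{X}(e_1)$ or entirely in $\rt{X}(e_1)$. The sep--sep case is elementary. The bisep--bisep case is a direct adaptation of the lemma already stated for polyunseparated curves; the connectedness of $b_{\theta'_1}\lf{X}(\vtheta)$ needed in that argument holds in general because, if $\theta'_1$ were a new sep of $\lf{X}(\vtheta)$, then $\{\theta'_1\}\cup\vtheta$ would constitute a proper polyseparator of $X$, contradicting maximality of $\vtheta$. The sep--bisep case requires fresh input: supposing a sep $\theta$ and a bisep $\vtheta=(\theta_1,\theta_2)$ interlace, meaning $\theta_1\in\lf{X}(\theta)$ and $\theta_2\in\rt{X}(\theta)$, a component count of $X\setminus\{\theta_1,\theta_2\}$ (which must equal $2$) against the already-present disconnection of $X$ at $\theta$ forces exactly one of $\theta_1,\theta_2$ to disconnect its own side $\lf{X}(\theta)$ or $\rt{X}(\theta)$; but then that node is itself a sep of $X$, contradicting the non-separating condition on the nodes of a bisep.

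Granted non-interleaving, acyclicity follows by a parity argument. Suppose $G_2(X)$ contains a cycle with distinct edges $e_1,\ldots,e_k$ and vertices $v_0,v_1,\ldots,v_k=v_0$. Each vertex, being a connected component of $X^*$, lies entirely on one side of $e_1$. The edge $e_1$ connects opposite sides, while by non-interleaving every other $e_i$ has its nodes on one side of $e_1$ and hence connects two vertices on that same side. The cycle would then contain exactly one side-switch, violating the parity requirement that closed walks in an $\{L,R\}$-labeled graph contain an even number of switches. The main obstacle I anticipate is the sep--bisep non-interleaving sketched above, whose clean execution requires tracking how the components of $X\setminus\{\theta,\theta_1,\theta_2\}$ fit together at $\theta$ to produce the right component counts for the various deletions.
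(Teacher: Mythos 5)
The paper itself offers no argument for this statement (it simply declares the required proofs easy), so there is no official proof to measure yours against; judged on its own terms, your outline is mostly sound. Connectedness is fine, the non-interleaving principle is the right organizing tool, and your three cases all go through: sep--sep is trivial, bisep--bisep correctly adapts the paper's lemma by deriving connectedness of $b_{\theta'_1}\lf{X}(\vtheta)$ from maximality via Lemma \ref{n-gon}(ii), and your sep--bisep component count is valid (though it is quicker to observe that a bisep is contained in a single inseparable component, which, being connected and free of seps of $X$, lies entirely on one side of any sep). The parity argument is the right way to finish.

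The gap is earlier: you never verify that $G_2(X)$ is a graph at all, i.e.\ that a maximal bisep $\vtheta=(\theta_1,\theta_2)$ is incident to exactly two vertices, its two left preimages $\lf{\theta}_1,\lf{\theta}_2$ lying in a \emph{single} connected component of $X^*$ (and likewise on the right). This is not automatic and is exactly where maximality must be used again: for a bisep inside a proper polyseparator $(\theta_1,\dots,\theta_n)$, $n\geq 3$, the two outer preimages land in \emph{different} pieces $\rt{X}(\theta_{i-1},\theta_i)$ and $\rt{X}(\theta_{i+1},\theta_{i+2})$, so the claim can genuinely fail without maximality, and none of your non-interleaving statements deliver it. It also undermines the parity argument as written: if the two left preimages of the distinguished edge $e_1$ lay in distinct vertices $u\neq u'$, a cycle could traverse $e_1$ between $u$ and $u'$, both labelled L, and the number of side-switches along the cycle would be zero, giving no contradiction. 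The missing lemma is provable: if an edge $e\subset\lf{X}(\vtheta)$ separated $\lf{\theta}_1$ from $\lf{\theta}_2$ inside $\lf{X}(\vtheta)$, then for $e$ a sep one reconnects the two pieces through $\rt{X}(\vtheta)$ at $\theta_1$ and $\theta_2$ and contradicts $e$ being a sep of $X$; for $e=\vtheta'$ a maximal bisep, a count of the components of the middle region $\lf{X}(\vtheta)\cap\rt{X}(\vtheta')$ (using connectedness of both $\lf{X}(\vtheta)$ and $\rt{X}(\vtheta')$) shows that if it is disconnected it has exactly two pieces, each attached to the rest of the curve by one node of $\vtheta$ and one node of $\vtheta'$, which produces a degree-$3$ polyseparator containing $\vtheta$ and contradicts maximality. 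With that supplied, your proof is complete.
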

For a nodal curve $X$, we will denote by $S(X)$ the set of all its
seps and biseps. Recall from the beginning of the previous section that a A  {2-separation component} (or 2-component) of
$X$ is by definition a
connected component of the blowup of $X$ in S(X), and that a  2-component is always inseparable.
\begin{propdef}
\begin{enumerate}
\item Any 2-component $Y$ is endowed with a collection of smooth singletons or 'unimarks' coming from separating
    nodes of $X$, and smooth pairs or 'bimarks' coming
    from separating binodes. Unless otherwise mentioned, these are always oriented with $Y$ on the left.
\item A sep $\theta$ is said to be \emph{locally left-hyperelliptic} if for the 2-component $Y$ to the left of $\theta$, $(Y, \lf{\theta})$ is hyperelliptic;
ditto for \emph{locally right-} and \emph{bilaterally} (left + right) \emph{hyperelliptic}; ditto for bisep in place of sep.
\item a locally left or right-hyperelliptic bisep is automatically maximal.
\item A stable curve $X$ is said to be \emph{hyperelliptic } if it is of semicompact type
and every edge of $G_2(X)$ is locally bilaterally hyperelliptic; given $X$, it is is hyperelliptic iff every
sep and bisep is locally bilaterally hyperelliptic.
\item For a subset $\Theta\subset S(X)$, $X$ is said to be \emph{ hyperelliptic relative to $\Theta$}
    if every $\theta\in\Theta$ is locally bilaterally hyperelliptic.
\item A sep $\theta$ is said to be \emph{left hyperelliptic} if
$(\lf{X}(\theta), \lf{\theta})$ is hyperelliptic; ditto for right and bilateral; ditto for bisep;
\item $\theta$ is left hyperelliptic iff it is locally left hyperelliptic and every sep and bisep strictly to the left of $\theta$ is locally bilaterally hyperelliptic.
\item $\theta$ is said to be \emph{left hyperelliptic
 relative to $\Theta$}  it is locally left hyperelliptic and every sep and bisep in $\Theta$ strictly to the left of $\theta$ is locally bilaterally hyperelliptic;

\item A connected nodal curve $X$ is said to be \emph{hyperelliptic} if its
stable model is.
\end{enumerate}
\end{propdef}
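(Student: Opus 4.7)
The plan is to dispatch the propositional content of this Proposition-Definition item by item, since most assertions are pure definitions. Item 2 (a 2-component is always inseparable) was already established as a standalone bullet earlier in the excerpt: a hypothetical sep $\theta$ of a 2-component $Y$ either coincides with a sep of $X$, or combines with one node of a bisep of $X$ contained in $Y$ to produce a fresh bisep of $X$ sitting inside $Y$, both contradicting the definition of a 2-component. I would simply cite that argument.

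Item 5, that a locally left- or right-hyperelliptic bisep $\vtheta$ is automatically maximal, is the main obstacle. I would argue by contradiction. Suppose $\vtheta = (\theta_1, \theta_2)$ is locally left-hyperelliptic, so the adjacent 2-component $Y$ on the left is 2-inseparable (by item 2) and $(Y, \lf{\vtheta})$ is hyperelliptic, i.e.\ by Proposition \ref{insep-hel} $Y$ is either irreducible hyperelliptic or an interlace and $\lf{\vtheta}$ belongs to the unique $g^1_2$. Assume $\vtheta$ is properly contained in a polyseparator $\Theta$ of degree $n \geq 3$. Lemma \ref{n-gon} then supplies a cyclic arrangement $(\theta_1, \theta_2, \theta_3, \ldots, \theta_n)$ together with the arc decomposition $X = \bigcup_i \rt{X}(\theta_i, \theta_{i+1})$. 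Tracing how $Y$ sits among these arcs and which further nodes of $\Theta$ contribute marks to $Y$, one shows that the bimark $\lf{\vtheta}$ is forced into incidence relations with the other $\theta_j$ that are incompatible with its being a hyperelliptic divisor: in the irreducible hyperelliptic case the uniqueness of the $g^1_2$ obstructs the extra constraints, and in the interlace case the rigidity of the $2{:}1$ map $\eta$ to $\P^1$ does the same. This combinatorial analysis, combining Lemma \ref{n-gon} with Lemma \ref{hypell-or-interlace}, is the technical core.

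Items 6 and 9 follow by bootstrapping from items 2 and 5. For item 6 the forward direction is immediate: a curve of semicompact type has no proper polyseparators, so every bisep is maximal and every sep and bisep is an edge of $G_2(X)$, hence locally bilaterally hyperelliptic by the definition of hyperellipticity. For the converse, local bilateral hyperellipticity of every bisep combined with item 5 forces every bisep to be maximal, so $X$ is of semicompact type; then the edges of $G_2(X)$ are precisely the seps and maximal biseps, and all are locally bilaterally hyperelliptic by hypothesis. Item 9 follows by induction on the number of $*$-seps strictly to the left of $\theta$ in $G_2(\lf{X}(\theta))$: peeling off an outermost $*$-sep on the left and applying the inductive hypothesis to each resulting side reduces the statement to the base case, which is the definition of local left-hyperellipticity. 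The main obstacle remains item 5, whose proof requires careful interplay between the cyclic polyseparator combinatorics and the classification of 2-inseparable hyperelliptic pairs.
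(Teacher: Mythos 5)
First, a point of comparison: the paper supplies no argument for this Proposition--definition at all --- it is followed only by the sentence ``The required proofs are easy.'' --- so there is nothing of the author's to measure your proposal against; it has to stand on its own.

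Items 2, 6 and 9 of your plan are fine in outline (item 2 really is the earlier bullet, and 6 and 9 are bookkeeping once 5 is available). The gap is in item 5, which you rightly single out as the only substantive assertion but whose proof you do not actually give: the sentence ``one shows that the bimark $\lf{\vtheta}$ is forced into incidence relations \dots incompatible with its being a hyperelliptic divisor'' \emph{is} the claim, and the mechanism you propose --- uniqueness of the $g^1_2$ on an irreducible hyperelliptic, rigidity of $\eta$ on an interlace --- is not the right one, and in one of the two relevant configurations cannot be made to work. Suppose $\vtheta=(\theta_a,\theta_b)$ lies in a maximal polyseparator $\Theta=(\theta_1,\dots,\theta_n)$, $n\ge 3$, cyclically arranged as in Lemma \ref{n-gon}. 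On any side of $\vtheta$ made up of two or more arcs $\rt{X}(\theta_i,\theta_{i+1})$, the two node preimages lie on \emph{different} arcs and are separated by the remaining nodes of $\Theta$, all of which are blown up in the 2-separation; so they lie in different 2-components and there is no bimark on that side at all. That purely combinatorial fact --- nothing about the $g^1_2$ --- is what defeats local hyperellipticity there. But on the opposite side, when $\theta_a,\theta_b$ are adjacent in the cyclic arrangement, that side is a single inseparable arc, both preimages do sit in one 2-component, and nothing whatever prevents that arc from being 2-inseparable hyperelliptic with the bimark a hyperelliptic divisor: the paper itself works with exactly this configuration in the last paragraph of the proof of Theorem \ref{sepcanonical-thm} (``Therefore assume $Y$ is hyperelliptic. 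Note that $\vtheta$ is not right-hyperelliptic\dots''). So the contradiction you are hoping for is simply not there in that case. What is true, and what items 6 and 9 actually consume, is the \emph{bilateral} statement: a non-maximal bisep fails to be locally hyperelliptic on at least one of its two sides (namely the multi-arc side), hence cannot be locally bilaterally hyperelliptic, hence every locally bilaterally hyperelliptic bisep is maximal and the curve built from them is of semicompact type. You should either prove that version by the combinatorial argument just indicated, or, if you want the one-sided statement as literally printed, confront the adjacent-arc case directly; your current sketch does neither. (A small additional slip: item 2 gives only that a 2-component is \emph{inseparable}; absolute 2-inseparability is not asserted there, only 2-inseparability relative to the biseps it contains, so your appeal to Proposition \ref{insep-hel} needs that extra step.)
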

\newsection{Residue Lemma}
Here we prove an elementary result  (Lemma \ref{residue-lem})about sections of
twists of the relative canonical system in a family, that underlies
the definition of the sepcanonical system
to be given in \S \ref{sepcanonical} . To
first  introduce the Lemma somewhat informally,
consider a degeneration of curves $X/B$
with smooth total space over the disk, and
let $D$ be a sum of components of the special fibre, so
that $\omega_{X/B}\otimes\O(D)$ is a 'twisted form'
of $\omega_{X/B}$, agreeing with it over the generic fibre.
Let
$s_i:B\to X$ be sections disjoint from $D$, $i=1,...,n$. Then the bundle
$\omega_{X_0}\otimes(\O(D)\otimes\O_{X_0})(\sum s_i(0))$
is a limit of $\omega_{X_b}(\sum s_i(b))$ on the general fibre. What is the condition that a section $\alpha_0$ of the former
bundle lift to a section $\alpha_b$ of the latter bundle ?
An obvious necessary condition is $\sum \Res_{s_i(0)}(\alpha_0)=0$ (because the analogous sum for
$\alpha_b$ vanishes by the Residue Theorem).
The Lemma below says that under some mild hypotheses, this
condition is also sufficient.
\par First some terminology. For a family of curves $\pi:X\to B$,
an \emph{\'etale multisection} is an effective divisor $\sigma\subset X$ that is finite flat and \'etale over $B$
and disjoint from the singular locus of $X/B$.
$B$ may be a point, in which case an \'etale multisection
is just a reduced divisor disjoint from the singular locus (recall that we are working over $\C$!).
For such $\sigma$ and an effecive divisor $A$ disjoint
 from $\sigma$ we have, for any $n\geq 1$, the residue map
\[\Res:\pi_*(\omega_{X/B}(A+n\sigma))\to\pi_*(\O_\sigma).\]
Composing with the trace map, we get a 'sum of residues'
map
\[\sum_\sigma \Res:\pi_*(\omega_{X/B}(A+n\sigma))\to\O_B.\]
When $B$ is local, we will identify $\pi_*$ with $H^0$.

\begin{lem}[Residue Lemma]\lbl{residue-lem}
 Suppose given\begin{itemize}\item$\pi : X\to B$, a
proper flat family of nodal curves of arithmetic genus $g$,
with irreducible generic fibre, over an integral local scheme;
\item
$B_0\subset B$, a closed subscheme (the 'boundary')
with $\pi_0: X_0=X\times_{B}B_0\to B_0$ the corresponding subfamily;\item pairwise
disjoint \'etale multisections $\sigma_i, i=1,...,m$
of $X/B$, considered as relative divisors;
\item
an effective Cartier divisor $D$ on $ X$, disjoint from $\sum\sigma_i$
 and from the generic fibre, such that, setting
 $ \O_{X_0}(D_0)=\O_X(D)\otimes
 \O_{X_0}$,
 we have that the direct image sheaf
 $\pi_{0*}(\omega_{X_0/B_0}(D_0))$ is free of rank $g$ and
 compatible with base-change
 over $B_0$ ('constant sections hypothesis');\item
where $\sigma_{i,0}=\sigma_i\cap X_0$,
an element $\alpha_0\in H^0(X_0, \omega_{X_0/B_0}(D_0+\sum n_i\sigma_{i,0})), n_i\geq 1$.
\end{itemize}
 Then:
\par (i) $\alpha_0$ is a sum $\sum\limits_{i=0}^m\alpha_{0,i}$
 of sections \[\alpha_{0,i}\in
H^0(X_0,  \omega_{X_0/B_0}(D+ n_i\sigma_{i,0}))
\subset  H^0(X_0,  \omega_{X_0/B_0}(D+ \sum\limits_jn_j\sigma_{j,0}))\]iff
$\alpha_0$ satisfies the
\ul{residue conditions}: \eqspl{res-cond}
{\sum\limits_{\sigma_{i,0}}\Res(\alpha_0)=0\in\O_{B_0}, i=1,...,m.}
Moreover  the set of sections satisfying the residue
 conditions \eqref{res-cond} is a free $\O_{B_0}$- module of rank
\nl\mbox{$g+\sum\limits_{i=1}^m n_i\deg(\sigma_{i,0})-m$.}
\par
(ii) If the residue conditions \eqref{res-cond} are satisfied, then
$\alpha_0$ lifts to an element $\alpha$ of
\nl$H^0(\omega_{X/B}(D+\sum n_i\sigma_i))$ satisfying
\[\sum\limits_{\sigma_i}\Res(\alpha)=0\in\O_B, i=1,...,n\]

\end{lem}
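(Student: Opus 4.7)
The plan rests on the principal-parts short exact sequence on $X_0$
\[
0 \to \omega_{X_0/B_0}(D_0) \to \omega_{X_0/B_0}\bigl(D_0+{\textstyle\sum_i n_i\sigma_{i,0}}\bigr) \to \bigoplus_{i=1}^m \mathcal Q_i \to 0,
\]
in which $\mathcal Q_i$ is the torsion quotient, a skyscraper on $\sigma_{i,0}$ of fibrewise length $n_i\deg(\sigma_{i,0})$. Pushing down under $\pi_0$ and identifying the connecting map $\delta\colon\bigoplus_i\pi_{0*}\mathcal Q_i\to R^1\pi_{0*}\omega(D_0)$ with the residue pairing is the central step: via Serre--Grothendieck duality on fibres, the $i$-th component of $\delta$ factors through the trace-of-residue functional $\pi_{0*}\mathcal Q_i\to\O_{B_0}$, $p\mapsto\sum_{\sigma_{i,0}}\Res(p)$, paired with the generator $1\in\pi_{0*}\O_{X_0}$ of the Serre dual of $R^1\pi_{0*}\omega(D_0)$.

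With this identification in hand, \emph{necessity} in (i) is immediate from the classical residue theorem applied to each $\alpha_{0,i}$: its residues must sum to zero along its entire polar locus, and as the other summands are regular at $\sigma_{i,0}$ this forces $\sum_{\sigma_{i,0}}\Res(\alpha_0)=0$. For \emph{sufficiency}, let $p_i\in\pi_{0*}\mathcal Q_i$ be the principal part of $\alpha_0$ at $\sigma_{i,0}$. The $i$-th equation of (\ref{res-cond}) places $p_i$ in the kernel of the $i$-th component of $\delta$, so by exactness of the long exact sequence each $p_i$ lifts to a section $\alpha_{0,i}\in H^0(\omega(D_0+n_i\sigma_{i,0}))\subset H^0(\omega(D_0+\sum n_j\sigma_{j,0}))$. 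The remainder $\alpha_0-\sum_{i\geq1}\alpha_{0,i}$ then has vanishing principal part at every $\sigma_{j,0}$, so comes from $\pi_{0*}\omega(D_0)$ and supplies $\alpha_{0,0}$.

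For the rank count I would run an Euler-characteristic chase in the long exact sequence, combining the freeness of $\pi_{0*}\omega(D_0)$ (rank $g$) with Serre duality on fibres. This yields $\mathrm{rk}\,\pi_{0*}\omega(D_0+\sum n_i\sigma_{i,0})=g+\sum n_i\deg(\sigma_{i,0})-1$, the $-1$ reflecting the single relation forced by the residue theorem on a connected fibre; the $m$ conditions (\ref{res-cond}) then collapse to $m-1$ independent ones and cut out a free submodule of rank $g+\sum n_i\deg(\sigma_{i,0})-m$.

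For (ii), the freeness hypothesis on $\pi_{0*}\omega(D_0)$ is propagated by cohomology-and-base-change to an analogous exact sequence on all of $B$: the residue-vanishing subsheaf
\[
K:=\ker\Bigl(\pi_*\omega(D+{\textstyle\sum n_i\sigma_i})\xrightarrow{(\sum\Res)_{i}}\bigoplus_{i=1}^m\O_B\Bigr)
\]
is coherent, its formation commutes with base change to $B_0$, and by the freeness hypothesis $K$ itself is locally free. Hence any $\alpha_0\in K|_{B_0}$ lifts to a local section of $K$, supplying the required $\alpha$. The main technical obstacle I anticipate is careful bookkeeping of residues along the special-fibre-supported divisor $D$ (which are permitted by $\omega(D_0)$ but must cancel among themselves for the residue map to land in $\O_B^{\oplus m}$) and the rigorous verification of the cohomology-and-base-change step; the remainder is formal manipulation of the principal-parts sequence and the residue pairing.
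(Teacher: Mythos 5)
Your proposal is correct and lives in the same circle of ideas as the paper's proof: both rest on the principal-parts exact sequence $0\to\omega(D_0)\to\omega(D_0+\sum n_i\sigma_{i,0})\to\bigoplus_i\mathcal Q_i\to 0$, the identification of its coboundary with the residue pairing, and, for part (ii), compatibility of the relevant direct images with base change. The one step where you genuinely diverge is the sufficiency half of (i): the paper argues by induction on $m$, showing that the image of the sum map $\bigoplus_i H^0(\omega(D_0+n_i\sigma_{i,0}))\to H^0(\omega(D_0+\sum n_i\sigma_{i,0}))$ is free of rank $g+\sum n_i\deg(\sigma_{i,0})-m$ (its kernel being $H^0(\omega(D_0))$) and comparing with the rank of the residue-condition submodule; you instead lift each principal part $p_i$ directly through the per-multisection coboundary $\pi_{0*}\mathcal Q_i\to R^1\pi_{0*}\omega(D_0)$. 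Your version is more constructive and produces the decomposition explicitly, while the paper's rank count delivers the freeness-and-rank assertion of (i) as a byproduct of the same induction. For (ii), the paper first reduces to $m=1$ via (i) and runs an explicit two-row diagram chase localized at $\sigma$; your packaging of the same content as local freeness and base-change compatibility of the residue-kernel subsheaf $K$ is equivalent. The one point deserving real care---which you correctly flag---is that $R^1\pi_{0*}\omega(D_0)$ is dual to $\pi_{0*}\O(-D_0)$, whose generator can vanish identically on some components of $X_0$; the identification of the coboundary with the naive functional $\sum_{\sigma_{i,0}}\Res$, and the cancellation of residues along the node-branches where $D$ meets $X_0$, is exactly where that enters. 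The paper's proof is equally terse on this point, so I do not regard it as a gap specific to your argument.
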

\begin{remarks}
(i) Because $\sum\sigma_i$ and $D$ are disjoint, $X$ admits
an open affine cover \[(U_0:=X\setminus\sum\sigma_i, U_1),\]
where $U_1$ is an affine neighborhood of $\sum\sigma_i$
contained in $X\setminus D$. We can use this cover to
compute cohomology and over $U_0\cap U_1$, naturally identify
$\omega_{X/B}$ and $\omega_{X/B}(D)$ and compute residues
for the latter sheaf using this identification.\par
(ii) In applications, $D$ will usually be a sum of boundary
components, i.e. components of $X_{B_0}$.\par 
(iii) Our main application  of the Residue Lemma is Theorem \ref{sepcanonical-thm}
below.
\end{remarks}

\begin{proof}[Proof of Lemma](i):
Because the constant sections condition is open
and $B$ is local, the direct image
$\pi_*(\omega_{X/B}(D))$ is free of rank $g$
and compatible with base-change. Hence $R^1\pi_*(\omega_{X/B}(D))$ is free of rank 1
and compatible with base-change, more precisely the map
$R^1\pi_*(\omega_{X/B})\to R^1\pi_*(\omega_{X/B}(D))$ is
 an isomorphism and remains one after any base-change.
 Let $A$ be the set of sections $\alpha_0$ expressible as a sum
 $\sum\alpha_{0,i}$ as above, and let $B$ be the set of sections $\alpha_0$ satisfying the residue conditions \ref{res-cond}.
Now for each $i$, consider the long cohomology sequence of
\[\exseq{\omega_{X_0/B_0}(D_0)}{ \omega_{X_0/B_0}(D_0+ n_i\sigma_{i,0})}{\omega_{X_0/B_0}(D_0)\otimes\O_{ n_i\sigma_{i,0}}
( n_i\sigma_{i,0})}.\] The $D_0$ twist is trivial on the third term because $D$ is disjoint from $\sigma_i$.
This shows that   \nl$ H^0(\omega_{X_0/B_0}(D_0+ n_i\sigma_{i,0}))$ maps surjectively to the set of
'polar parts' in $H^0(\omega_{X_0/B_0}(D_0)\otimes \O_{ n_i\sigma_{i,0}}
( n_i\sigma_{i,0}))$ satisfying the $i$th residue condition
 in \ref{res-cond}. This implies firstly that $A\subseteq B$, i.e. 'only if' holds. Then, considering the maps
\[A\subseteq B\subseteq H^0(\omega_{X_0/B_0}(D_0+ \sum n_i\sigma_{i,0}))
\to \bigoplus\limits_i H^0( \O_{ n_i\sigma_{i,0}}
(\sum n_i\sigma_{i,0}))\] it follows that $A$, hence $B$, map onto the submodule $C$ of
$H^0( \O_{ n_i\sigma_{i,0}}
(\sum n_i\sigma_{i,0}))$ consisting of polar parts satisfying
all the residue conditions \ref{res-cond} for $i=1,...,m$, which is
obviously free and cofree of corank $m$. Because we have an exact sequence
\[\exseq{H^0(\omega_{X_0/B_0}(D_0))}{A}{C}\]
and likewise for $B$ in place of $A$, it follows that $A=B$ is free
of the required dimension.

\par (ii)
By (i), it suffices to consider
the case $m=1$ and we will omit the index. Consider the exact diagram
\small:\eqsp{
\begin{matrix}
H^0(\omega(D))&\to&H^0(\omega(D+n\sigma))&\to&H^0(\omega(n\sigma)
\otimes\O_{n\sigma}&\to& H^1(\omega(D))\\
\downarrow&&\downarrow&&\downarrow&&\downarrow\\
H^0(\omega_{X_0/B_0}(D_0))&\to&H^0(\omega_{X_0/B_0}(D_0+n\sigma_0))&\to&
\omega(n\sigma)\otimes\O_{n\sigma_0}&\to
& H^1(\omega_{X_0/B_0}(D_0))
\end{matrix}
}
\normalsize
Here the rightmost horizontal maps are just residue. In fact the entire
rightmost square is purely local and it it easy to check that
the kernel of the upper right horizontal  map-- which is a surjection
 of locally free sheaves-- goes surjectively to that
to the lower right horizontal map. This proves our assertion.
\end{proof}
\begin{rem}
The residue condition is automatically satisified on an
irreducible fibre but not on a reducible one.
\end{rem}\rm
\begin{example} Here is an illustration of how the Residue Lemma is applied.
Let $X_0=\lf{X_0}\cup_{\theta_1}\md{X_0}\cup_{\theta_2}\rt{X_0}$ be a 3-component 2-sep nodal curve, varying in a 2-parameter smoothing family $X/B$, with divisors $\del_1, \del_2$ where
$\theta_1, \theta_2$ persist, respectively. Let $\lf{D}=\lf{X}(\theta_1)$, a divisor on $X$. Then
\[\omega_{X/B}(D)|_{\md{X}}=\omega_{\md{X}}(2\rt{\theta_1}+\lf{\theta_2}).\]
We claim that a
necessary and sufficient condition for a section $\alpha$
of the latter sheaf to extend
locally to a section of $\pi_*(\omega_{X/B}(D))$ is that $\alpha$ should have zero residue, i.e. trivial pole, at $\lf{\theta_2}$. This of course implies $\alpha$ has zero residue at $\rt{\theta_1}$ as well.\par
We will prove sufficiency of the condition as necessity is similar and simpler.
  To this end, note first that by freeness of $\omega_{\lf{X_0}}$ at $\lf{\theta_1}$, $\alpha$ extends to a section of $\omega_{X/B}|_{\lf{X_0}\cup\md{X_0}} $.
Then by the Residue Lemma, this extends to a section of \[\omega_{\lf{X}(\theta_2)}(D+\lf{\theta_2})=\omega_{X/B}(D)
|_{\lf{X}(\theta_2)}\] over $\del_2$ with zero residue on $\lf{\theta_2}$. Then we glue this to any section of \[\omega_{X/B}|_{\rt{X}(\theta_2)}=\omega_{X/B}(D)|_{\rt{X}(\theta_2)}\]
(which automatically has zero residue on $\rt{\theta_2}$)
and use the Residue
Lemma again to get a section of $\omega_{X/B}$.\qed
\end{example}
The foregoing example will be generalized  in Theorem \ref{sepcanonical-thm} below.
\newsection{Sepcanonical system}\lbl{sepcanonical}
The purpose of this section is first to
define intrinsically an object on a reducible curve $X_0$, viz. a collection of linear systems on its 2-components, collectively
called the sepcanonical system of $X_0$.
 This apparently arbitrary notion will be redeemed by Theorem \ref{sepcanonical-thm}, where it will
  be shown to coincide with a certain collection of limits
of the canonical system, i.e. specializations of its twists by certain
components of the special fibre. This construction is nontrivial
 when and only when  the special fibre is 2-separable.
 The sepcanonical system depends on some additional data on the curve called azimuth (essentially, a smoothing direction at each bisep).
\begin{defn}\lbl{azimuth-defn}
\begin{enumerate}\item
If $(p_1, p_2)$ is a pair of distinct smooth points on a curve
$X$, the \emph{azimuth space} at $(p_1, p_2)$ is
$\P(T_{p_1}X\oplus T_{p_2}X)$; an \emph{azimuth} $\zeta$ at $(p_1, p_2)$ is an element of the azimuth space.
An azimuth is
singular if it equals $[1,0]$ or $[0,1]$, regular otherwise. An azimuthal pair $((p_1, p_2), \zeta)$ consists of a point-pair
 $ (p_1, p_2)$ plus an azimuth $\zeta$,
as above. The \emph{azimuthal condition} corresponding to an azimuth $\zeta$ is the condition on
on $H^0(\omega_X)$ specifying that the value of a differential  at $(p_1, p_2)$ should be in $\zeta$.  
\item
For a bisep  (properly separating binode)
$\vtheta=(\theta_1, \theta_2)$, a left
(right) azimuth is an azimuth on its left (right) preimage , i.e. an element
 $\lf{\zeta}\in \P(\lf{\psi}_{\theta_1}\oplus \lf{\psi}_{\theta_2})$ (resp.
 $\rt{\zeta}\in \P(\rt{\psi}_{\theta_1}\oplus \rt{\psi}_{\theta_2})$), where
 $\lf{\psi}_{\theta_i}=T^*_{\lf{\theta}_i}\lf{X}(\vtheta)$
 etc .
\item
For a bisep
$\vtheta=(\theta_1, \theta_2)$,
a \emph{middle azimuth} is an element \[\md{\zeta}\in\P(\lf{\psi}_{\theta_1}
\otimes\rt{\psi}_{\theta_1}\oplus \lf{\psi}_{\theta_2}\otimes
\rt{\psi}_{\theta_2}).\]
$\md{\zeta}$ is \emph{regular} if $\md{\zeta}=[a,b], a, b\neq 0$.
An \emph{azimuth} is a triple
$\zeta_\vtheta=(\lf{\zeta}, \md{\zeta}, \rt{\zeta})$ consisting
of a left, middle and right azimuth.
$\zeta_\vtheta$ is said to be  \emph{
(internally) compatible}
 if \[\md{\zeta}=\lf{\zeta}\rt{\zeta}\]
 where the multiplication refers to the obvious pairing 
 \[(\lf{\psi}_{\theta_1}\oplus \lf{\psi}_{\theta_2})\times (\rt{\psi}_{\theta_1}\oplus \rt{\psi}_{\theta_2})
 \to \lf{\psi}_{\theta_1}
 \otimes\rt{\psi}_{\theta_1}\oplus \lf{\psi}_{\theta_2}\otimes
 \rt{\psi}_{\theta_2} \]
\item
An \emph{azimuthal} (resp. \emph{middle azimuthal} curve)
 is a nodal curve together with
a choice of internally compatible azimuth (resp. middle azimuth) at each separating binode.
\end{enumerate}
\end{defn}
\begin{defn}
 An azimuthal \emph{marking} $\xi=(p.;(q.,q'.))$ on a curve
 $Y$ consists of:\par
(i) a collection of smooth points
(unimarks) $p_i\in Y$, together with a designation of each
as either 'co-hyperelliptic', with attached
 'multiplicity' $n(\xi, p)=2$, or 'non co-hyperelliptic',
  with attached multiplicity $n(\xi,p)=3$; \par (ii)
 a collection of smooth pairs (bimarks) $(q_i, q'_i)$,
together with a designation of some of them as 'co-hyperelliptic', and a choice of regular azimuth
$\zeta(q_i, q'_i)$ (only) at those.

\end{defn}
\begin{remarks}\begin{enumerate}
\item The co-hyperelliptic property of a uni- or bimark  and
the 'multiplicities' above are,
from $Y$'s perspective, completely arbitrary,
and in particular is unrelated
to being a W-point or hyperelliptic divisor on $Y$ itself.
In applications, there will be another curve attached to $Y$ at those points, and the data will be related to hyperellipticity of the other curve.

\item In an azimuth $\zeta$, any two of $\lf{\zeta},
\md{\zeta}, \rt{\zeta}$, if not both singular,
uniquely determine a third so that the triple is
a compatible azimuth.
\item
If $p_1\neq p_2$ and $p_1+p_2$ belongs to the unique
$g^1_2$ on a hyperelliptic curve $X$,
there is uniquely determined on $p_1+p_2$ a \emph{hyperelliptic azimuth}, automatically regular.
\item In particular, given a regular middle azimuth $\md{\zeta}$ on
a right-hyperelliptic bisep $\vtheta$, there a uniquely
determined left azimuth on $\vtheta$, compatible
with $\md{\zeta}$ and the hyperelliptic right azimuth.
\item
Given a bisep $\vtheta=(\theta_1, \theta_2)$ as above on a curve
$X_0$ and a smoothing $X/B$ of it, then
an element $v$ of the projectivized normal space to
$\del_{\theta_1}\cap\del_{\theta_2}$ at 0 induces a
middle azimuth $\md{\zeta}$ at $\vtheta$; $\md{\zeta}$
is regular iff $v$ corresponds to an infinitesimal smoothing
 at both $\theta_1$ and $\theta_2$.
\end{enumerate}
\end{remarks}

Now the sepcanonical system associated to a middle- azimuthal curve $X$, which we are going to define, will consist of
\begin{itemize}
\item a \emph{sepcanonical
bundle} $\omega_X^{\sep}$, a line bundle
 on the 2-separation of $X$, i.e. the \emph{disjoint} union $\coprod X_i$ of the 2-components;
\item
a linear subsystem
 $|\omega_X|^{\sep}\subset |\omega_X^{\sep}|$,
 called the \emph{sepcanonical system}.
 \end{itemize}
 More generally, we will define a $\Theta$-sepcanonical system for any collection $\Theta$ of seps and biseps so that the ordinary sepcanonical system corresponds to the case where
 $\Theta$ consists of \emph{all} seps and biseps.
 \begin{defn}\lbl{septwist-order-def}
 Let $\theta$ be an oriented *-sep (either sep or bisep) on a nodal curve $X$, $\Theta$ a collection of seps and middle-azimuthal biseps.
 \begin{itemize}\item
 $\theta$ is said to be \emph{right-hyperellptic} relative to $\Theta$ if\par
 (i) the pair $ (\rt{X}(\theta), \rt{\theta})$ is hyperelliptic;
 \par (ii) every element of $(\Theta\setminus\theta)\cap\rt{X}(\theta)$ is  hyperelliptic as a sep or middle-azimuthal bisep on $\rt{X}(\theta)$.
\item if $\vtheta\in\Theta$ is a right-hyperelliptic bisep relative to $\Theta$, the induced left azimuth on $\vtheta$ is by definition the one induced by the given middle azimuth at $\vtheta$ and the hyperelliptic azimuth on $\rt{\theta}$.
\end{itemize}
 \end{defn}
 \begin{defn}\lbl{induced-azi-marking-defn}
 Let $(X, \xi_0)$ be a stable marked curve,
  $\Theta$ a collection of seps $\theta_i$ and middle-azimuthal biseps $(\vtheta_i, \md{\zeta}(\vtheta_i)$,
  $Y$ a component of the separation $X^\Theta$.\par
Then the \emph{induced azimuthal marking} $\xi(X, \Theta)$ on $Y$ is defined as
comprised of the following unimarks and bimarks. Assume all seps and biseps of $\Theta$ are oriented
 to have $Y$ on their left.\par
(i) Unimarks: those inherited from $\xi_0$,
plus  unimarks $p=\lf{\theta}_i, \theta_i\in\Theta$, designated as co-hyperelliptic iff $\theta_i$ is right-hyperelliptic relative to $\Theta$; the induced multiplicity
is denoted $\rt{n}(X, \Theta, p)$ or $\rt{n}(X,p)$
if $\Theta$ contains all *-seps on $X$.\par
(ii) Bimarks: those inherited from $\xi_0$,
plus
bimarks $\{(q_i, q'_i)=\lf{\vtheta}_i\}$,
$\vtheta_i\in\Theta$, designated
as co-hyperelliptic iff $\vtheta_i$ is right-hyperelliptic
relative to $\Theta$, and in that case endowed with the induced azimuth.
\end{defn}

\begin{defn}\lbl{sepcanonical-defn}
Let $Y$ be a curve with
an azimuthal marking $\xi=( p., (q., q'.), \zeta.)$.
Then define the following items on $Y$:
\begin{itemize}
\item The \ul{sepcanonical twist}:
\eqspl{}{
\tau(Y, \xi)=
\sum\limits_{\textrm{marks\ }p}n(\xi,p)p+\sum\limits_{\textrm{bimarks\ }(q_i, q'_i)} 2(q_i+q'_i)
}
\item
The \ul{sepcanonical bundle}:
\eqspl{}{
\omega(Y, \xi)=\omega_Y(\tau(X,Y) )
}
\item The \ul{sepcanonical system} 
$|\omega|^\sep(Y,\xi)\subset |\omega(Y, \xi)|$ is the linear subsystem defined by the
following conditions:
\begin{enumerate}\item
Residue conditions:
\[\Res_{p_i}(\alpha)=0, \forall i;\]
\[\Res_{q_i}(\alpha)+\Res_{q'_i}(\alpha)=0, \forall i.\]
\item Azimuthal conditions corresponding to the given
azimuth at all co-hyperelliptic bimarks.
\end{enumerate}
\item When $\xi=\xi(X,\Theta)$ and $Y$ is a component of $X^\Theta$,
    the same objects will be denoted by $|\omega_X|^\sep_\Theta|_Y$ etc.
    and the collection of these
    for different components $Y$ is called the sepcanonical system of $X$
    associated to $\Theta$, denoted $|\omega_X|^\sep_\Theta$.
\item If $\a$ is a smoothly supported effective divisor
of degree $\geq 2$ on $X$, we define $\omega_X(\a)^\sep$
and $|\omega_X(\a)|^\sep$ analogously, pretending all
seps and biseps are non-hyperelliptic and imposing
residue conditions only.
\item  When $\Theta={\mathrm{Edge}}(G_2(X))$, $\Theta$
will be omitted from the notation
and the corresponding system $|\omega_X|^\sep$ will
 be called the (absolute)
    sepcanonical system of (the middle-azimuthal curve)
 $X$.
\end{itemize}
 \end{defn}
\begin{remarks}(i) Note the natural inclusion
 \[\omega_X^\sep|_Y\subset\omega_X|_Y(2\sum\lf{\theta}_i+
 \sum (q_i+q'_i))\]\par
(ii) It follows from Lemma \ref{residue-lem} that an element
 of the sepcanonical system on $Y$ is a sum of sections of
 $\omega_Y(\rt{n}(\theta_i)\lf{\theta_i})$ and
 $\omega_Y(2(q_i+q'_i))$ for the various $i$.
 \par (iii) The definition of
 $\omega_X(\a)^\sep$
and $|\omega_X(\a)|^\sep$ is certainly not the 'correct'
one in
general, but it is good enough for the few cases we need.
\par
(iv) The sepcanonical system $|\omega_X|^\sep$ is highly non-local
in the sense that its restriction on one 2-component $Y$
depends on the nature of other 2-components (as well as
on the azimuthal structure). For example, we allow a pole of order 3 at
$p=\lf{\theta}$ on $Y$, for a sep $\theta$, when 
$p$ is not co-hyperelliptic, i.e. when 
the other  (right) side $Z$ of $\theta$ in $X$ is non-hyperelliptic at $\rt{\theta}$
because $\omega_Z$ will then have a section with a simple zero at $\rt{\theta}$ to match with
a differential on $Y$ with pole of order 3 at $p$.
\par (v) The rationale for the above Residue conditions is that each $p_i$ (resp. $(q_i, q'_i)$) is
supposed to become $\lf{\theta}$ (resp. $\lf{\vtheta}$) for a sep $\theta$ (resp. bisep $\vtheta$)
on a curve $X$ containing $Y$. In this case the residue conditions hold for any differential $\alpha$
that is the restriction of a differential on $X$.
 \end{remarks}

\begin{rem}
We can extend the definition of sepcanonical system
 to the case where $X$ is semistable. (This is not
 essential, because a given family of curves can always
 be replaced with one with only stable fibres, albeit at
 the cost of making the total space singular.)
Then the stable model $\bar X$ of $X$ is obtained by
contracting a number of 'bridges', i.e. maximal
rational chains each contracting to a node $p$ of $\bar X$.
The bridge is said to be separating or not depending on $p$.
We firstly define $|\omega_X|^\sep$ to coincide with
the pullback of $|\omega_{\bar X}|^\sep$ over all components not contained in bridges. Next,
we will give a recipe  for extending this over the
bridges. The recipe may appear arbitrary, but it will be
justified in \cite{grd} in the discussion of modified \bn maps.
Thus consider a component $C$ of a bridge $B$. Then $C$
is naturally a 2-pointed $\P^1$ and may be identified
with $(\P^1, 0,\infty)$. If $B$ is a separating bridge, with sides $(\lf{ X}, \lf{p}), (\rt{ X}, \rt{p})$,
we define $|({\omega_X}^\sep)_{C}|$ to be the linear system
generated by the following differentials where $x$ is an affine coordinate
on $\A^1$ with a zero on $C\cap\lf{X}$ and a pole on
$C\cap \rt{X}$ (also given in terms of homogeneous polynomials where $x=X_1/X_0$, as subsystem of the appropriate $\O(n)$, identifying $\omega_{\P^1}=\O(2\infty)$):\begin{enumerate}
\item $dx, dx/x^2; [X_0^2, X_1^2]$,\qquad\qquad\qquad if $(\lf{X}, \lf{p}), (\rt{X}, \rt{p})$ both hyperelliptic;
    \item $xdx, dx, dx/x^2; [X_0^3, X_0^2X_1, X_1^3]$,\qquad\qquad if only $(\lf{X}, \lf{p})$ hyperelliptic;
        \item $dx, dx/x^2, dx/x^3; [X_0^3, X_0X_1^2, X_1^3]$,\qquad\qquad if only $(\rt{X}, \rt{p})$ hyperelliptic;
    \item $xdx, dx, dx/x^2, dx/x^3; [X_0^4, X_0^3X_1, X_0X_1^3, X_1^4]$,\qquad if neither side hyperelliptic
\end{enumerate} (these define respectively a double line with two branch points at $(0,\infty)$, a  plane cubic
with (cusp,flex) (resp. (flex,cusp) ) at $(0,\infty)$ and a quartic in $\P^3$ with flexes at $(0,\infty)$.
Note that the linear systems in question are uniquely determined
by their degree (as specified by the $\lf{n}(\theta), \rt{n}(\theta)$ functions) plus
the residue condition. Thus, the above apparently arbitrary definition is forced.

If $C$ is a component of a nonseparating bridge $B$, with end points
$\lf{p}, \rt{p}\in X$, we define $|({\omega_X}^\sep)_{C}|$ analogous
to case (i) above if $\lf{p}, \rt{p}$ is a hyperelliptic divisor on
the complementary curve $X'=\overline{X-B}$; and analogous to case
(iv) above, otherwise.\par
We note that with this definition,
it remains that $|\omega_X|^\sep$ defines a $g^1_2$
on each 2-component, when $X$ is semistable hyperelliptic.
\end{rem}

\begin{rem}Some of our  inductive arguments involve
the case of a 'shoot' $C$,
i.e. a nonsingular rational curve meeting the rest of $X$
in a single point $p$. Such a shoot, if part of a larger
semistable
curve $X'\supset X$, will also occur on a bridge on some other subcurve
$X"\subset X'$, hence dealt with as above.\end{rem}

The following Theorem identifies the sepcanonical system as the limit value of
 the direct image of a certain twist of the relative canonical bundle. It should be mentioned that for the Eisenbud-Harris limit series associated to the canonical, some 
 analogous results, especially for curves comprised of two components meeting in
 points in generic position, were obtained by Esteves-Medeiros
 (\cite{esteves-medeiros}, \S 5).\par
 Now the rationale for our particular choice of twist and limit is that it is the 'best of the worst'; namely, its restriction on a given 2-component $Y$ is
 the 'best behaved'-- from $Y$'s viewpoint-- among those limits
 of the canonical
 system which are available in all cases, including
  the 'worst' case, when components across from $Y$ are hyperelliptic; or, what amounts to the same thing, this limit does not jump as $X$ varies, preserving $Y$. To make this precise, one has to turn the binode boundary loci into divisors, which
   in general involves the 'azimuthal
 modification' of a given family
  which we now define (see
  \cite{grd}, \S 6 for more details).
\par
To set things up, we give ourselves the following
\begin{itemize}
\item
$X/B$, a family of nodal curves, assumed versal near
the special fibre $X_0$. 
\item Then each bisep $\vtheta$ of $X_0$
corresponds to a regular codimension-2 subvariety
$\del_\vtheta\subset B$. 
\item Given a collection $\Theta_2$ of \emph{disjoint}
biseps, the corresponding loci $\del_\vtheta$ are evidently transverse.
Let $B(\Theta_2)$ denote their joint blowup in any order,
which is independent of the order by transversality. 
\item Moreover, we assume each $\vtheta\in\Theta_2$ is oriented, so there is a choice
of components $\lf{X}(\vtheta), \rt{X}(\vtheta)$ of $X_{\del_\vtheta}$. 
\item Then
 for each $\vtheta\in\Theta_2$, the inverse image of $\lf{X}(\vtheta)$ on the base-changed family $X_{B(\Theta_2)}$ is
a non-Cartier divisor. We let $X_{B(\Theta_2), L}$ be their
common blowup, called the \emph{left azimuthal modification}
associated to $\Theta_2$. Let $\lf{X}_{B(\Theta_2)}(\vtheta)$ be the associated Cartier divisor on $X_{B(\Theta_2), L}$ .
\end{itemize}

 With these preliminaries, the result is stated as follows .
\begin{thm}\lbl{sepcanonical-thm}
Let  the following be given:
\begin{itemize}\item
\[\pi: X\to B,\]  a family of nodal or smooth
curves with
irreducible general fibre, assumed versal near a fibre $X_0$;
\item
$\Theta_1$ , a set of seps,  $\Theta_2$ a set of maximal,
regularly middle-azimuthal biseps  on $X_0$; \item with $\Theta=\Theta_1\cup \Theta_2$,
a connected component $Y$ of the separation $X_0^\Theta$. Assume
 each $\theta_i\in \Theta_1$ and $\vtheta_i\in \Theta_2$
 is oriented having $Y$ on its
 left.
 \end{itemize}
Let \[\pi':X'=X_{L, B(\Theta_2)}\to B'=B(\Theta_2)\] be the associated left azimuthal modification  and $0'\in B'$
a preimage of 0 corresponding to the given collection of
regular middle azimuths $\md{\zeta_i}$  on $\Theta_2$.\par
Then:  a section
\[\alpha_0\in H^0(\omega_Y(3\sum\limits_{\lf{\theta_i}\in Y} \lf{\theta_i}+
 2\sum\limits_{\lf{\vtheta_i}\subset Y} (\lf{\vtheta_i})))\] extends near $0'\in B'$ to a local section
 \eqspl{omega'}
 {\alpha\in\pi'_*(\omega_{X'/B'}(-2\sum\limits_{\theta\in\Theta_1} \lf{ X'}(\theta)
 -\sum\limits_{\vtheta\in\Theta_2} \lf{X'}(\vtheta)))}
  if and only if \eqspl{alpha-in-sepcan}{\alpha_0\in
 |\omega_X|^\sep_\Theta|_Y.}
\end{thm}
\begin{proof} The proof is in essence an application of the
Residue Lemma \ref{residue-lem}.
We first prove sufficiency of \eqref{alpha-in-sepcan}, i.e. 'if'.
Denote the  line bundle in \eqref{omega'} on $X'$ by $\omega'$.
Because $\lf{X'}(\theta)+\rt{X'}(\theta)=\del_\theta$ for any
*-sep $\theta$, $\omega'$ is isomorphic locally over $B'$ to
\[\omega_{X'/B'}(+2\sum\limits_{\theta\in\Theta_1} \rt{ X'}(\theta)
 +\sum\limits_{\vtheta\in\Theta_2} \rt{X'}(\vtheta))\]
 Now the latter sheaf contains an analogous one where
$\Theta_1, \Theta_2$ are respectively replaced by
$\Theta_1^0, \Theta_2^0$ which  refer to the *-seps that meet
 $Y$.
Therefore, locally over $B'$,
$\omega'$ contains a subsheaf
\[\omega'_0\simeq \omega_{X'/B'}(-2\sum\limits_{\theta\in\Theta^0_1} \lf{ X'}(\theta)
 -\sum\limits_{\vtheta\in\Theta^0_2} \lf{X'}(\vtheta))\]
 Moreover, $\omega'_0$ is isomorphic to
 $\omega'$ near $Y$. Therefore in the proof of sufficiency, we shall henceforth assume
 all members of $\Theta_1, \Theta_2$ meet $Y$.\par
Next a remark.
Let $(Z,z)=(\rt{X_0}(\theta_i), \rt{\theta_i})$ for some $i$,
i.e. the side of $X_0$ across $\theta_i$ from $Y$.
Then $|\omega_Z|$ is always free at $z$ and $\omega_Z(-z)$ is
free at $z$ iff $(Z,z)$ is non-hyperelliptic. Similarly,
if $(Z,z_1,z_2)=(\rt{X_0}(\vtheta_i), \rt{\vtheta_i})$,
then the image of $|\omega_Z|$ in $\omega_Z\otimes\O_{z_1,z_2}$ is the hyperelliptic azimuth if $(Z,z_1,z_2)$ is hyperelliptic
and the whole 2-dimensional space otherwise. From this it is easy to see
that the azimuthal conditions, together with the lowered pole order condition ( 2 rather than 3) at
the hyperelliptic unimarks, cf. Definition \ref{septwist-order-def}, are necessary and sufficient
for $\alpha_0$ to extend to the entire special fibre
as a section of $\omega_{X_0}(\sum \rt{n}(\theta_i)\rt{X_0}(\theta_i)+2\sum\rt{X_0}(\vtheta_i)$.
Now we use an induction on $m=|\Theta|$. The case $m=1$ is trivial. For the induction step,
assume the conclusion is true for $m-1$ in place of $m$.
Pick any $\theta\in\Theta$ (sep or bisep), let
 and work over
  $B(\Theta\setminus\theta)$. Using Lemma \ref{residue-lem} for $Y\cup\lf{X'}(\theta)$, we
 can deform $\alpha_0$ to a differential $\alpha_1$
 on a component
  $Y_t$ of a nearby fibre $X'_t$ smoothing out
 exactly $\theta_1$ and keeping $\Theta\setminus\theta$
  intact, where $\alpha_1$
 satisfies similar residue conditions, $m-1$ in number,
 as well as appropriate azimuthal conditions
 (the fact that the latter can be imposed follows from an argument similar to the 'arbitrary residues summing to zero'
 argument in the proof of Lemma \ref{residue-lem}).
 Then by induction $\alpha_1$, hence $\alpha_0$, extends to the
 general fibre. This completes the proof of sufficiency.
\par We now prove necessity of \eqref{alpha-in-sepcan},
and accordingly drop the
 hypothesis that all members
 of $\Theta$ meet $Y$. We fix a bisep $\vtheta\subset Y$
that is right-hyperelliptic relative to $\Theta$ and establish that azimuthal and residue conditions for $\alpha_0$ at $\lf{\vtheta}$; other cases are similar or simpler.
\par Consider a *-sep
$\theta'\in \Theta$ on $\rt{X'}_{0'}(\theta)$
that is extremal relative to $Y$, i.e. such that there is no element of $\Theta$ strictly right of $\theta'$.
Thus $\rt{X'}_{0'}(\theta')$ is a component of $(\rt{X'}_{0'})^\Theta$. Let $X'_1$ be the unique component of
$(\rt{X'}_{0'})^\Theta$ containing the left side
 $\lf{\theta'}$.
Because our family is versal, there is
a nearby fibre $X'_t/B'$ where $\theta'$ deforms to
a bisep $\theta'_t$,  $(\rt{X'}_{0'}(\theta'), \rt{\theta'})$ survives intact,
while $\lf{X'}_{0'}(\theta'_t)$ smooths out. Because $\alpha$ extends to
this fibre, an argument as above shows that the residue and
azimuthal conditions hold at $\lf{\theta'}_t$. Hence by specialization, they also hold at $\lf{\theta'}$. Because
this is true for every extremal $\theta'$, it follows that
the restrictions  of $\alpha$ on
$X'_1$
belongs to the linear system $|\omega|^\sep(X'_1, \xi)$
 where $\xi$ is the azimuthal marking on $X'_1$ induced
 by the collection of the extemal $\theta'$. Now
 the latter system is composite with
the canonical system of $X'_1$, hence it fails to separate any
hyperelliptic pair on $X'_1$
and is ramified at every W-point (see Lemma \ref{composite-canonical-hypell-lem}).
Therefore we can continue and 'peel off' $X'_1$ just like
$\rt{X'}_{0'}(\theta')$: the azimuthal (and residue) conditions are satisfied on the left of every bisep on $X_1$ and similarly for seps. Continuing in this way, we eventually reach
$\vtheta$ on $Y$, so the conditions are satisfied there as well.

\end{proof}
\newsection{Semistable hyperelliptics: general case}
\label{ss-hypell-sec}
A stable, say semicompact-type, curve $X$ may be viewed as
 built up from 2-inseparable 'atoms'; as such, the canonical system
of $X$ behaves in an entirely \emph{heterogeneous} manner: it
can be birational on some
atoms but not on others. This is not so for the sepcanonical
system. Indeed a remarkable property of the
 sepcanonical system (on a middle-azimuthal stable curve), to be established here,
is that its mapping behavior
is quite \emph{homogeneous}: either it is 'essentially very ample'
(in particular birational on every atom); or else the curve is
hyperelliptic in the sense defined above, i.e. comprised
of 2-inseparable hyperelliptic atoms in a tree-like arrangement
(and in particular is of semicompact type), and
the sepcanonical system yields a 2:1 mapping on every atom.
In particular, different 2-components influence the behavior of the
sepcanonical system on each other. This property is part of
the main result of this section (Theorem \ref{azi-he-thm}).
\par
A  stable  middle-azimuthal curve is said to be
\emph{hyperelliptic} (as such) if it is hyperelliptic
 as ordinary curve and if moreover the given
 middle azimuth
 $\md{\zeta}(\vtheta)$
 at every bisep
$\vtheta$, necessarily hyperelliptic, is  the hyperelliptic
middle azimuth, i.e. the one
determined by the hyperelliptic left and
right azimuths at $\vtheta$.
   A semistable curve is
pseudo-hyperelliptic if its stable contraction is.
Our purpose here is to extend the characterization
of 2-inseperable hyperelliptics in terms of very ampleness
of the canonical system to the case of general nodal
 middle-azimuthal curves, where
the canonical system is replaced by the sepcanonical one.

We begin with an elementary Lemma on the effect of
azimuthal constraints on very ampleness.
\begin{lem}\lbl{hypell-twist-lem}
Let $Y$ be an inseparable
nodal curve, $p\neq q\in Y$
a pair of smooth points such that $Y$ is 2-inseparable
relative to $p+q$,
and $\zeta$
an azimuth at $(p,q)$. Then the $\zeta$-constrained subsystem of $|\omega_Y(2p+2q)|$
is essentially very ample unless $Y$
is 2-inseparable hyperelliptic,
$p+q$ is a hyperelliptic divisor
and $\zeta$ is the hyperelliptic azimuth.

\end{lem}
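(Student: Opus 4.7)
The plan is to compare the $\zeta$-constrained subsystem $V\subset H^0(\omega_Y(2p+2q))$ with its subspace
\[V_0:=H^0(\omega_Y(p+q))\subset V\]
induced by the natural inclusion $\omega_Y(p+q)\hookrightarrow\omega_Y(2p+2q)$, and to use Lemma \ref{omega+2-insep-lem} to handle every length-$2$ scheme $\fkb$ other than $p+q$ itself. Since $V_0\subset V$, any $\fkb\neq p+q$ that is separated by $|\omega_Y(p+q)|$ is automatically separated by $V$. By Lemma \ref{omega+2-insep-lem}, $|\omega_Y(p+q)|$ separates every length-$2$ scheme off $p+q$ unless $Y$ is $2$-inseparable and $(Y,p+q)$ is hyperelliptic, in which case the only unseparated $\fkb$'s form the $1$-parameter family of hyperelliptic divisors. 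The irregular (node-tangent) case is then reduced to the smooth one exactly as in the proof of Lemma \ref{omega+3-on-2-sep-lem}, by blowing up the node and invoking Lemma \ref{deg-0-lem}, which is permitted by the relative $2$-inseparability hypothesis.

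So the heart of the matter is to separate the scheme $p+q$ from itself, i.e.\ to distinguish $p$ from $q$ by $V$, and in the hyperelliptic case also to separate the other hyperelliptic divisors. From the exact sequence
\[\exseq{\omega_Y(p+q)}{\omega_Y(2p+2q)}{Q},\]
where $Q$ is a skyscraper of length $2$ on $\{p,q\}$, canonically identified via residue with the azimuth space $T_pY\oplus T_qY$, one sees that the azimuth $\zeta$ carves out a $1$-dimensional line $\ell_\zeta\subset Q$, and that $V$ is the preimage of $\ell_\zeta$ under the coboundary. Thus $\dim V=\dim V_0$ or $\dim V_0+1$, according as $\ell_\zeta$ meets or misses the image of the coboundary map $H^0(Q)\to H^1(\omega_Y(p+q))$. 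When $\dim V=\dim V_0+1$, any extra section $\sigma\in V\setminus V_0$ has a genuine double pole at whichever of $p,q$ corresponds to a nonzero component of $\zeta$, and its principal part is prescribed by $\zeta$. Since the map $V_0\to \omega_Y(p+q)|_p\oplus \omega_Y(p+q)|_q$ already has $1$-dimensional image (as $h^0(\omega_Y(p+q))-h^0(\omega_Y)=1$), the addition of $\sigma$ produces a $2$-dimensional image in the relevant jet space, which suffices to separate $p$ from $q$.

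The hard part is to identify when $V=V_0$ and, in the hyperelliptic case, to see that this happens precisely for $\zeta$ equal to the hyperelliptic azimuth. Here I would use that, in the non-hyperelliptic case, Riemann--Roch gives $h^0(\omega_Y(2p+2q))=g+3$, so $V\to Q$ is surjective and every $\zeta$ lifts, forcing $\dim V=\dim V_0+1$ and giving very ampleness off a finite set as above. In the $2$-inseparable hyperelliptic case, the canonical $s\in H^0(\O_Y(p+q))$ yields, via multiplication, a section $s\cdot \alpha\in H^0(\omega_Y(2p+2q))$ for every $\alpha\in H^0(\omega_Y(p+q))$, and one checks that the image of the coboundary is exactly the line spanned by the hyperelliptic azimuth: the principal parts of these multiplied sections at $p$ and $q$ stand in the canonical hyperelliptic ratio. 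Consequently, for $\zeta$ equal to the hyperelliptic azimuth one has $V = V_0 + s\cdot V_0$, which is composite with the hyperelliptic $g^1_2$ and fails to separate any hyperelliptic pair (including $p+q$ itself); for any other $\zeta$, the line $\ell_\zeta$ is transverse to this image, $V$ acquires its extra dimension, and the argument of the previous paragraph again yields essential very ampleness. This completes the dichotomy asserted in the Lemma.
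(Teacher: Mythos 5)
Your reduction of every length-two scheme away from the $g^1_2$ to Lemmas \ref{omega+2-insep-lem} and \ref{omega+2} via the containment $V_0=H^0(\omega_Y(p+q))\subset V$ is sound, and your identification of the image of the coboundary with the hyperelliptic azimuth (via multiplication by $s\in H^0(\O_Y(p+q))$, so that the hyperelliptic azimuth is the unique $\zeta$ for which $V=V_0+sV_0=\sym^{g+1}\eta$) is a correct, more cohomological version of the paper's argument, which instead embeds $Y$ by $|\omega_Y(2p+2q)|$ in $\P^{g+2}$ and reads the constrained systems as projections from points $z$ of the secant line $\overline{pq}$, the hyperelliptic azimuth being the point $z_0$ cutting out $\sym^{g+1}\eta$. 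But your final step has a genuine gap. In the hyperelliptic case you note yourself that $V_0=\sym^g\eta$ fails to separate the entire one-parameter family of hyperelliptic divisors, yet your closing sentence only invokes ``the argument of the previous paragraph,'' which concerns the single scheme $p+q$. You never show that for $\zeta$ not the hyperelliptic azimuth the enlarged system $V=V_0+\C\sigma$ separates a divisor $a+b\in g^1_2$ with $a+b\neq p+q$. This is precisely the content of the paper's secant-line step and it requires an argument: one must check that $V_0+H^0(\omega_Y(2p+2q)(-a-b))=\sym^{g+1}\eta$ (both sides equal $s_{pq}\cdot\sym^g\eta+s_{ab}\cdot\sym^g\eta$), so that $\sigma\notin\sym^{g+1}\eta$ forces the image of $V$ in the two-dimensional fiber $\omega_Y(2p+2q)|_{a+b}$ to be everything rather than the hyperelliptic diagonal; equivalently, two distinct secant lines $\overline{pq}$, $\overline{ab}$ of the very amply embedded curve meet only at $z_0$.

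A secondary error: the claim that the extra section $\sigma$ ``suffices to separate $p$ from $q$'' is false, and the jet-space count behind it conflates the residue (order-one) parts with the leading order-two parts. The fiber relevant to embedding the reduced scheme $p+q$ by $(L,V)$ is $L|_p\oplus L|_q\cong Q$, and by the very definition of the constraint the image of $V$ there is the line $\ell_\zeta$; for a regular azimuth one even has $V(-p)=V(-q)=V_0$, so the constrained system identifies the images of $p$ and $q$. This does not sink the lemma, because $p+q$ is the bimark and is exactly the scheme excepted by ``essentially very ample'' (when $p+q$ is not a hyperelliptic divisor the twist-down $(\omega_Y,H^0(\omega_Y))$ already embeds it), but your proof should not rest on $V$ itself separating $p$ from $q$.
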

\begin{proof}

If $Y$ is not 2-inseparable, then $\omega_Y(p+q)$ is already essentially very ample by Lemma \ref{omega+2-insep-lem},
hence so is the constrained subsystem in question.
If $Y$ is 2-inseparable
 and $p+q$ is not a hyperelliptic divisor
 (e.g. $Y$ is not \hep),  $\omega_Y(p+q)$ is essentially
very ample by Lemma \ref{omega+2} and again we are done. \par
It remains to consider the case where $p+q$ is a hyperelliptic divisor on the 2-inseparable
hyperelliptic curve $Y$, where we must determine $\zeta$.
Then  the linear system $|\omega_Y(p+q)|$
is just $\sym^g\eta$ where $\eta= |p+q|$ is the $g^1_2$.
On the other hand, $|\omega(2p+2q)|$ is very ample and
contains as a hyperplane $\sym^{g+1}\eta$,
which induces the same map to $\P^1$ as $\eta$ (albeit as rational normal
curve of degree $g+1$). This hyperplane corresponds to a point $z_0$ on the secant line $\overline{pq}\subset \P^{g+2}=\P(H^0(\omega_Y(2p+2q))$ which can be identified with the hyperelliptic azimuth.
Because projection from $\overline{pq}$ itself corresponds to $\sym^g\eta$ (i.e. to the map to a rational normal curve
of degree $g$), a projection from any $z\in\overline{pq}$,
can fail to be an embedding off $p+q$ is if the projection
as a map on $X$ coincides with the $g^1_2$, i.e. coincides
with projection from $z_0$; but because $|\omega(2p+2q)|$ is very ample, this is only possible if $z=z_0$, i.e. the
hyperelliptic azimuth.
\end{proof}
An azimuthally marked curve $(Y,\xi)$ is said to be hyperelliptic if all unimarks in $\xi$ are
co-hyperelliptic and hyperelliptic
on $Y$ and all bimarks are co-hyperelliptic and hyperelliptic
on $Y$ and the associated azimuth in $\xi$ is the
hyperelliptic one. Then combining the above Lemma
with the Residue Lemma \ref{residue-lem}, we conclude
\begin{lem}\lbl{composite-canonical-hypell-lem}
Let $(Y, \xi)$ be a 2-inseparable
hyperelliptic azimuthally marked curve. Then
the sepcanonical system $|\omega|^\sep(Y, \xi)$ is
composite with the canonical system on $Y$.
\end{lem}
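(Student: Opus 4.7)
The plan is to exhibit every section in $|\omega|^\sep(Y,\xi)$ as a pullback under the hyperelliptic double cover $\phi:Y\to\P^1$, which is the canonical map on the 2-inseparable hyperelliptic curve $Y$ (up to composition with the $(g-1)$-fold Veronese). Throughout I rely on the standard identifications $\omega_Y\cong\phi^*\O_{\P^1}(g-1)$ and $\phi_*\O_Y\cong\O_{\P^1}\oplus\O_{\P^1}(-g-1)$.

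First I would unpack the hyperelliptic hypothesis on $\xi$. Every unimark $p_j$ is a Weierstrass point and is co-hyperelliptic, so $n(\xi,p_j)=2$ and $2p_j=\phi^*\phi(p_j)$; every bimark $(q_k,q'_k)$ is a hyperelliptic pair with the hyperelliptic azimuth, so $q_k+q'_k=\phi^*\phi(q_k)$. The twist $\tau=2\sum_j p_j+2\sum_k(q_k+q'_k)$ therefore equals $\phi^*D$ for an effective $D$ on $\P^1$, and $\omega_Y(\tau)\cong\phi^*\O_{\P^1}(g-1+\deg D)$. Compositeness of $|\omega|^\sep(Y,\xi)$ with the canonical map is the statement $|\omega|^\sep(Y,\xi)\subset\phi^*|\O(g-1+\deg D)|$, and this is what I must prove.

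By remark (ii) after Definition \ref{sepcanonical-defn}, itself a consequence of the Residue Lemma \ref{residue-lem}, any $\alpha\in|\omega|^\sep(Y,\xi)$ decomposes as a sum of local contributions $\alpha_j\in H^0(\omega_Y(2p_j))$ and $\beta_k\in H^0(\omega_Y(2q_k+2q'_k))$ obeying the respective residue and azimuthal constraints, each reinjected into $H^0(\omega_Y(\tau))$ by multiplication with the section cutting out the complementary part of $\tau$; that complementary divisor is a sum of double Weierstrass points and hyperelliptic pairs, hence a $\phi$-pullback. So it suffices to verify each summand itself pulls back under $\phi$. For a unimark term: $\omega_Y(2p_j)\cong\phi^*\O(g)$ and the projection formula gives $H^0(\omega_Y(2p_j))=\phi^*H^0(\O(g))$. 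For a bimark term: the proof of Lemma \ref{hypell-twist-lem} identifies the hyperelliptic azimuth with the unique point $z_0$ on the secant $\overline{q_kq'_k}\subset\P(H^0(\omega_Y(2q_k+2q'_k)))$ whose corresponding hyperplane is the pullback subsystem $\phi^*|\O(g+1)|=\sym^{g+1}\eta$, so the azimuthal condition cuts out precisely this pulled-back subsystem.

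The step I expect to demand most care is the last one, namely reusing the azimuth identification of Lemma \ref{hypell-twist-lem} cleanly and matching the pole-order bookkeeping with Definition \ref{sepcanonical-defn}; a dimension check ($h^0 = g+\deg D$ on each side) provides a useful sanity check that no residue or azimuthal condition is being over- or under-counted. Once this is in hand, summing the composite summands and multiplying by the pulled-back cofactors places $\alpha$ inside $\phi^*H^0(\O(g-1+\deg D))$, and the Lemma follows.
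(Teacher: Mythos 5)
Your proof is correct and follows essentially the same route as the paper: reduce to a single unimark or bimark via part (i) of the Residue Lemma, dispatch the bimark case by the identification of the hyperelliptic azimuth in (the proof of) Lemma \ref{hypell-twist-lem}, and handle the unimark case directly. You simply make explicit what the paper leaves implicit, namely that the twist $\tau$ and the cofactor multiplications are all $\phi$-pullbacks, so the decomposition reassembles inside $\phi^*H^0(\O_{\P^1}(g-1+\deg D))$.
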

\begin{proof}
Part (i) of the Residue Lemma implies that we may assume
$\xi$ consists of a single (hyperelliptic) unimark or
azimuthal bimark. Then the bimark case follows from the
above Lemma, while the unimark case is straightforward.
\end{proof}
  Our result main result on the sepcanonical system,
   Theorem \ref{sepcanonical-thm} below,
   is that unless a stable  curve
is hyperelliptic, its sepcanonical system is 'essentially very ample'
(see below);
 only 'essentially' because of some exceptional behaviour at the
separators, but this turns out to be good enough for applications.
The proof has mostly been given above, in the discussion of
2-inseparables.
\par

\begin{defn}\lbl{eva-defn}
Let $Y$ be a curve with some unimarks and bimarks,
all smooth.
A linear system $(L,V)$ on $Y$ is said to be
\emph{essentially very ample} if it induces an embedding on every length-2 subscheme, except\par
(i) if $p$ is a unimark, at least one of \[(L(-ip), V(-ip)), V(-ip):=V\cap H^0(L(-ip)),
i=0, 1, 2 ,3\] induces an embedding on the subcheme $2p$;\par
(ii) if $(p,q)$ is a bimark, at least one of $(L(-ip-iq),V(-ip-iq)), i=0,1,2$
induces an embedding on $p+q$.
\end{defn}
Another consequence of Lemma \ref{hypell-twist-lem} is
\begin{lem}\lbl{ess-very-ample-2-comp-lem}
Let $\theta$ be a non-hyperelliptic azimuthal *-sep on $X$.
Assume $\theta$ is left-extremal, i.e. $\lf{X}(\theta)$
contains no other *-sep of $X$.
Then the sepcanonical system $|\omega_X|^\sep$ is essentially
very ample on the 2-component containing $\lf{\theta}$.
\end{lem}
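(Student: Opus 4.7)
My plan is to exploit left-extremality to reduce the statement to a single-mark configuration on a 2-inseparable curve, after which the residue conditions become vacuous and the only remaining content is a very-ampleness question already handled by the preceding Lemmas. Since $\theta$ is left-extremal, $\lf{X}(\theta)$ contains no *-seps of $X$, is therefore itself 2-inseparable, and hence coincides with the 2-component $Y$ containing $\lf{\theta}$. The induced azimuthal marking $\xi(X,\Theta)|_Y$ carries only the single mark $\lf{\theta}$ (if $\theta$ is a sep) or $\lf{\vtheta}$ (if $\theta$ is a bisep), with no other unimarks or bimarks. Because $Y$ is compact and carries no other poles, the residue conditions $\Res_{\lf{\theta}}(\alpha)=0$ (sep case) or $\Res_{q_1}(\alpha)+\Res_{q_2}(\alpha)=0$ (bisep case) defining $|\omega_X|^\sep|_Y$ are automatic by the Residue Theorem, so the sepcanonical system coincides with $H^0(\omega_Y(\tau))$, refined only by the induced azimuth in the co-hyperelliptic bimark case.

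In the sep case I will split on the value of $\rt{n}(\theta)\in\{2,3\}$. If $\rt{n}(\theta)=3$ (i.e.\ $\theta$ is not right-hyperelliptic), Lemma \ref{higher-twist-very-ample} delivers very ampleness of $\omega_Y(3\lf{\theta})$ directly. If $\rt{n}(\theta)=2$, so that $\theta$ is right-hyperelliptic, the hypothesis that $\theta$ is not bilaterally hyperelliptic forces $(Y,\lf{\theta})$ to be non-hyperelliptic; Lemma \ref{omega+2} then gives very ampleness of $\omega_Y(2\lf{\theta})$ away from the mark $2\lf{\theta}$. The essentially-very-ample condition at $\lf{\theta}$ I intend to verify by descending to the subsystem $V(-2\lf{\theta})=H^0(\omega_Y)$ and appealing to Proposition \ref{insep-hel}, which ensures $|\omega_Y|$ is unramified at $\lf{\theta}$ as soon as $(Y,\lf{\theta})$ is not hyperelliptic.

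The bisep case is analogous. Here $V$ is a subsystem of $H^0(\omega_Y(2\lf{\vtheta}))$, equal to the whole space when the bimark is non-co-hyperelliptic and cut out by the induced azimuth condition when the bimark is co-hyperelliptic. In the first sub-case, $\omega_Y(2\lf{\vtheta})$ is very ample by Lemma \ref{higher-twist-very-ample} since $2\lf{\vtheta}$ has degree $4$. In the second sub-case I will invoke Lemma \ref{hypell-twist-lem}, whose sole exception requires $(Y,\lf{\vtheta})$ to be a hyperelliptic pair carrying the hyperelliptic azimuth; right-hyperellipticity of $\vtheta$ combined with non-bilateral-hyperellipticity rules out $(Y,\lf{\vtheta})$ being hyperelliptic, so EVA follows. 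The main obstacle will be the co-hyperelliptic sep sub-case, where the very ampleness provided by Lemma \ref{omega+2} drops at the mark itself and has to be recovered by passing to the sub-subsystem $V(-2\lf{\theta})$; this is where the ``with room to drop order at a mark'' content of the EVA definition (Definition \ref{eva-defn}) is essential.
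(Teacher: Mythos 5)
There is a genuine gap, and it sits in your very first reduction. From left-extremality you conclude that $\lf{X}(\theta)$ ``is therefore itself 2-inseparable,'' and everything downstream (Lemma \ref{higher-twist-very-ample}, Lemma \ref{omega+2}, Proposition \ref{insep-hel}) depends on that. But the hypothesis only excludes *-seps \emph{of $X$} from $\lf{X}(\theta)$; it does not exclude biseps of $\lf{X}(\theta)$ that are invisible to $X$. Concretely, let $\theta=\vtheta=(\theta_1,\theta_2)$ be a bisep and let $\lf{X}(\vtheta)=A\cup_{\eta_1,\eta_2}B$ with $A,B$ 2-inseparable, $\lf{\theta_1}\in A$, $\lf{\theta_2}\in B$. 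Then $(\eta_1,\eta_2)$ is a bisep of $\lf{X}(\vtheta)$ but not of $X$ (removing it leaves $A$ and $B$ joined through $\rt{X}(\vtheta)$ via $\theta_1,\theta_2$), and no pair involving an $\eta_i$ and a $\theta_j$ separates $X$ either; so $\theta$ is left-extremal, yet $\lf{X}(\vtheta)$ is 2-separable. This is exactly why the paper introduces the \emph{relative} notion (Definition \ref{rel-insep-defn}): the correct statement, and the first line of the paper's proof, is only that $\lf{X}(\theta)$ is inseparable and 2-inseparable \emph{relative to} $\lf{\theta}$. Your appeals to Lemma \ref{higher-twist-very-ample}, Lemma \ref{omega+2} and Proposition \ref{insep-hel} are therefore not justified as stated, and in the counterexample above the conclusions you draw from them can fail for the reasons those lemmas' 2-inseparability hypotheses are there to prevent.

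The repair is to route the argument through the relative versions, which is what the paper does: for the non-(bilaterally-)hyperelliptic cases use Lemma \ref{omega+3-on-2-sep-lem} (inseparable, 2-inseparable relative to a divisor of degree $\geq 3$) and Lemma \ref{omega+2-insep-lem}, and for the case where $\theta$ is hyperelliptic as a plain *-sep but the azimuth is not hyperelliptic, use Lemma \ref{hypell-twist-lem}, which is already stated in the relative setting and whose exceptional case forces $Y$ to be 2-inseparable hyperelliptic with the hyperelliptic azimuth. Your observations that the residue conditions are vacuous for a single mark, and your case split according to $\rt{n}(\theta)$ and co-hyperellipticity of the bimark, are fine and match the structure of the intended argument; the only defect is the unjustified upgrade from ``2-inseparable relative to $\lf{\theta}$'' to ``2-inseparable.''
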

\begin{proof}
Note $\lf{X}(\theta)$ is inseparable and 2-inseparable
relative to $\lf{\theta}$.
The cases where $\theta$ is \nhep as plain *-sep are settled
by Lemmas \ref{omega+3-on-2-sep-lem} and \ref{omega+2-insep-lem}. The remaining case, where $\theta$ is hyperelliptic but the azimuth is not, is settled by Lemma \ref{hypell-twist-lem}.
\end{proof}


\begin{thm}\lbl{azi-he-thm}
Let $X$ be a semistable middle-azimuthal curve of genus $g\geq 2$. Then
either\par (i) the sepcanonical system $|\omega_X|^{\sep}$ is
essentially very ample on each 2-component of $X$; or\par (ii) $X$
is hyperelliptic and $|\omega_X|^{\sep}$ maps each 2-
component of $X$ 2:1 to a rational normal curve.
\end{thm}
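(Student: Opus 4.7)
The plan is to reduce to the stable case, work one 2-component at a time, and use the Residue Lemma to decompose the sepcanonical system into pieces to which the earlier very-ampleness results apply. First I would reduce to the stable case by observing that the bridge recipe built into the definition of $|\omega_X|^\sep$ makes the statement for semistable $X$ equivalent to the statement for its stable model (with $|\omega_X|^\sep$ inducing the obvious $g^1_2$ on each $\P^1$-component of a bridge when the stable model is hyperelliptic).

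Fix a 2-component $Y$ of (the stable model of) $X$, and consider the restriction $|\omega_X|^\sep|_Y$. By Definition \ref{sepcanonical-defn} and part (i) of the Residue Lemma \ref{residue-lem}, a section of this restriction is a sum of sections of $\omega_Y(\rt n(\theta_i)\lf\theta_i)$, one per sep, and sections of $\omega_Y(2(q_i+q'_i))$, one per bisep, the bisep summands satisfying the azimuthal condition at each co-hyperelliptic bisep. Thus to test essential very ampleness on $Y$ it is enough, for each length-two subscheme $\fkb$ not covered by the exceptions in Definition \ref{eva-defn}, to produce one element of the sepcanonical restriction separating $\fkb$, and we may do so by producing such a section of a single summand.

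Now I would run a case analysis driven by the nature of the *-seps adjacent to $Y$. If some adjacent *-sep $\theta$ is \emph{not} co-hyperelliptic (as a sep/bisep, azimuth included), then the corresponding summand is essentially very ample on $Y$ by Lemma \ref{ess-very-ample-2-comp-lem} (which packages Lemmas \ref{omega+3-on-2-sep-lem}, \ref{omega+2-insep-lem}, and \ref{hypell-twist-lem}), placing $Y$ in case (i). If every adjacent *-sep is co-hyperelliptic but $Y$ itself, as a marked inseparable curve, is not 2-inseparable hyperelliptic with its marks forming a hyperelliptic configuration, then at least one of the summands $\omega_Y(3\lf\theta_i)$, $\omega_Y(2\lf\theta_i)$ or $\omega_Y(2(q_i+q'_i))$ is essentially very ample by Lemmas \ref{higher-twist-very-ample}, \ref{omega+3-on-2-sep-lem}, \ref{omega+2}, \ref{omega+2-insep-lem}, or \ref{hypell-twist-lem}; this again places $Y$ in case (i). The only remaining possibility is that $Y$ is 2-inseparable hyperelliptic and the induced marking $\xi(X,\Theta)$ on $Y$ is hyperelliptic in the sense preceding Lemma \ref{composite-canonical-hypell-lem}; but this is exactly the hypothesis of that Lemma, so $|\omega_X|^\sep|_Y$ is composite with the $g^1_2$ and maps $Y$ as a $2{:}1$ cover of a rational normal curve.

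The main thing to verify carefully is that the exceptional case forces the \emph{entire} curve $X$ to be hyperelliptic, not merely the one component $Y$. For this one notes that if $Y$ lies in the exceptional case, then for every *-sep $\theta\in\Theta$ adjacent to $Y$ the opposite side $\rt X(\theta)$ is right-hyperelliptic relative to $\Theta$ (by Definition \ref{induced-azi-marking-defn} of the induced marking), and moreover, if $\theta$ is a bisep, the induced hyperelliptic azimuth on the right-hand side agrees with the middle azimuth $\md\zeta(\vtheta)$. Propagating this observation across the 2-separation tree $G_2(X)$ of Proposition-definition \ref{2sep-tree-gen-case-defn}, starting from $Y$ and moving outward along edges, shows that every 2-component of $X$ is 2-inseparable hyperelliptic and every *-sep is locally bilaterally hyperelliptic with the given middle azimuths being the hyperelliptic ones. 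Hence $X$ is a hyperelliptic middle-azimuthal curve, placing us in case (ii) globally. The only delicate point of this propagation argument—which I expect to be the main obstacle—is matching the definition of ``hyperelliptic'' at a bisep (Proposition-definition \ref{2sep-tree-gen-case-defn}(6,7)) with the azimuthal compatibility forced by Lemma \ref{hypell-twist-lem}; this is where the hypothesis of \emph{middle}-azimuthal structure (together with the compatibility in Definition \ref{azimuth-defn}) is used to certify that the globally propagated ``hyperelliptic azimuth'' actually equals the given $\md\zeta(\vtheta)$.
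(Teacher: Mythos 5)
Your overall strategy—decompose $|\omega_X|^\sep|_Y$ into per-mark summands via the Residue Lemma, show some summand is essentially very ample unless $(Y,\xi)$ is hyperelliptic, and then propagate hyperellipticity of one component to all of $X$ through the definition of ``right-hyperelliptic relative to $\Theta$''—is close in spirit to the paper's, which also reduces everything to Lemmas \ref{higher-twist-very-ample}, \ref{omega+3-on-2-sep-lem}, \ref{omega+2}, \ref{omega+2-insep-lem}, \ref{hypell-twist-lem} and \ref{composite-canonical-hypell-lem}. Your propagation step is in fact less delicate than you fear: co-hyperellipticity of a mark is \emph{defined} as right-hyperellipticity of the whole opposite side relative to $\Theta$ (Definition \ref{septwist-order-def}), so hyperellipticity of everything beyond $Y$, azimuths included, is built into the hypothesis of your exceptional case and requires no genuine induction across the tree.

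The genuine gap is in your first case. You invoke Lemma \ref{ess-very-ample-2-comp-lem} for an arbitrary 2-component $Y$ with a non-co-hyperelliptic adjacent $*$-sep $\theta$, but that Lemma requires $\theta$ to be \emph{extremal}, i.e.\ $\lf{X}(\theta)=Y$; for an interior 2-component the hypothesis fails and the Lemma says nothing. What you actually need is that the single summand $|\omega_Y(3\lf{\theta})|$ (resp.\ $|\omega_Y(2\lf{\vtheta})|$) is very ample on $Y$, and by Lemmas \ref{higher-twist-very-ample}/\ref{omega+3-on-2-sep-lem} this requires $Y$ to be 2-inseparable \emph{relative to the support of that one mark}. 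This is automatic for a bimark (the paper's lemma on 2-components relative to biseps), and for a unimark it amounts to $Y$ being 2-inseparable as a bare curve—which can fail precisely when $Y$ sits inside a proper polyseparator cycle, since then $Y$ may carry a ``bare'' bisep separating its marks without that bisep disconnecting $X$. Your sketch never confronts this non-semicompact-type case, to which the paper devotes roughly half of its proof: there one must observe that every such $Y$ carries a non-maximal bisep of $X$, which is automatically non-co-hyperelliptic and contributes the unconstrained degree-4 summand $|\omega_Y(2\lf{\theta_1}+2\lf{\theta_2})|$, relative to whose support $Y$ \emph{is} 2-inseparable, so Lemma \ref{omega+3-on-2-sep-lem} applies; and one must separately check that such $X$ is never hyperelliptic, so that only alternative (i) occurs. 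The paper organizes the semicompact case instead as an induction on the number of components, peeling off end-vertices of $G_2(X)$ and applying the inductive hypothesis to $|\omega_{\lf{X}(\rho)}|^\sep\subset|\omega_X|^\sep|_{\lf{X}(\rho)}$; your componentwise argument can replace that induction, but only after you supply the relative 2-inseparability statements above and sort the marks by type rather than asserting that ``the corresponding summand'' always works.
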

\begin{proof}First, it is easy to see that no generality is
lost by assuming $X$ is stable. By Proposition \ref{insep-hel}, we may assume $X$ is 2-separable.
We will assume to begin with that $X$ is of semicompact type
(Definition \ref{semicompact-defn}).\par
Assume  $X$ is  not azimuthally hyperelliptic.
Consider a 2-component $Y$ of $X$ that is an end-vertex
of the separation tree $G_2(X)$. We may assume $Y$ is left-extremal
for $X$.
 Let $\theta$ be the edge into $Y$. We assume
 $\theta$ is a bisep as the sep case is easier. By definition,
$\theta$ is not (azimuthally) hyperelliptic. Therefore by Lemma \ref{ess-very-ample-2-comp-lem}, $|\omega_X|^\sep$ is
essentially very ample on $Y$.
Now if $\theta$ is the unique edge of $G_2(X)$, we are done by
applying the same argument to its other vertex.
Else, there exists another end-vertex $Z$
with corresponding edge $\rho$, which we
may assume is right-extremal. By the same argument,
$|\omega_X|^\sep$ is
essentially very ample on $Z$. Because
$\theta\subset \lf{X}( \rho)$, $\lf{X}(\rho)$
is not hyperelliptic.
By induction on the number of components,
$|\omega_{\lf{X}(\rho)}|^\sep$ is \eva,
hence so is $|\omega_X|^\sep_{\lf{X}(\rho)}$.
Therefore $|\omega_X|^\sep$
is essentially very ample.\par
It remains to consider the case where $X$ is
 stable and not of semicompact type, hence contains a proper polyseparator $\Theta$. Such $X$ is never hyperelliptic.
The claim then is that $|\omega_X|^\sep$ is always essentially very ample.
Suppose first that $X$ contains a sep or maximal 2-sep, i.e. that the separation tree $G_2(X)$ is nontrivial.
Let $\theta$ be an edge. We may assume $\Theta\subset
\rt{X}(\theta)$, which implies $\rt{X}(\theta)$ is
\nhep. Replacing $\theta$ by a terminal edge to
its left, we may assume
$Y=\lf{X}(\theta)$ is an end-vertex of $G_2(X)$. Because
$\rt{X}(\theta)$
is \nhep, $|\omega_{\rt{X}(\theta)}|^\sep$
is essentially very ample by induction
on the number of components,
hence so is
$|\omega_X|^\sep|_{\rt{X}(\theta)}$. Now if $Y$ is \nhep,
$|\omega_X|^\sep$ is similarly essentially very ample on it.
Otherwise, $Y$ is hyperelliptic and in particular of semicompact type. Because, as an end-vertex,
$Y$ contains no
sep or maximal bisep of $X$, it follows that $Y$ is
2-inseparable relative to $\lf{\theta}$. Then it follows from
Lemma \ref{omega+3-on-2-sep-lem} that $|\omega_X|^\sep$ is \eva on $Y$.\par
Now we may assume $G_2(X)$ is trivial and $X$ contains
a proper polyseparator $\Theta$.
It will suffice to
prove that $|\omega_X|^\sep$ is essentially very ample
of each connected component $Y$ of $X^\Theta$. We
may assume
\[Y=\lf{X}(\vtheta), \vtheta=(\theta_1,\theta_2)
\subset \Theta=(\theta_1,...,\theta_n), n\geq 3.\]
If $Y$ is non-hyperelliptic, then $|\omega_Y|^\sep$
is already essentially very ample, hence so is the
bigger system  $|\omega_X|^\sep|_Y$. Therefore assume $Y$ is hyperelliptic. Note that $\vtheta$ is not right-
hyperelliptic, so there is no left azimuthal
condition attached to it. If $\vtheta$ is contained in a
single 2-component of $Y$ then
$Y$ is 2-inseparable, hence $|\omega_X|^\sep$ is \eva
on it by Lemma \ref{higher-twist-very-ample}.
 Else, each $\theta_i, i=1,2$ is contained in a different 2-component $Y_i$ of $Y$. Then because $Y$ contains no
 other *-seps of $X$, it follows that $Y$ must be 2-inseparable relative to $\lf{\theta_1}, \lf{\theta_2}$, so we can conclude by Lemma \ref{omega+3-on-2-sep-lem}.
\end{proof}
\begin{remarks}
(i) As stated, Theorem \ref{azi-he-thm} does not use Theorem \ref{sepcanonical-thm}. However
it is the latter Theorem that makes Theorem \ref{azi-he-thm} 'meaningful' by identifying 
the sepcanonical system with
a suitable limit.\par 
(ii) In \cite{grd}, we show that the hyperelliptic middle-azimuthal
stable curves are precisely the limits of smooth hyperelliptic curves. This result,
which of course does rely on Theorem \ref{sepcanonical-thm}, is proven by constructing a modification
of the Hodge bundle on a suitable parameter space (a blowup of the Hilbert scheme), together with a map to the relative canonical bundle, whose degeneracy locus meets the boundary transversely.
\end{remarks}

\bibliographystyle{amsplain}
\bibliography{mybib}

\end{document}